\numberwithin{equation}{section}
\def\e{\varepsilon}
\def\epsilon{\varepsilon}
\DeclareMathOperator{\Perm}{Perm}
\DeclareMathOperator{\pat}{pat}
\DeclareMathOperator{\occ}{occ}
\DeclareMathOperator{\Leb}{Leb}
\DeclareMathOperator{\Id}{Id}
\newcommand{\bbE}{\mathbb{E}}
\newcommand{\bbH}{\mathbb{H}}
\newcommand{\bbR}{\mathbb{R}}
\newcommand{\bbP}{\mathbb{P}}
\newcommand{\pocc}{\widetilde{\occ}}
\newcommand{\eqb}{\begin{equation}}
	\newcommand{\eqe}{\end{equation}}
\newcommand{\eqbn}{\begin{equation*}}
	\newcommand{\eqen}{\end{equation*}}
\newcommand{\BB}{\mathbbm}
\newcommand{\ol}{\overline}
\newcommand{\op}{\operatorname}
\newcommand{\ep}{\epsilon}
\newcommand{\wt}{\widetilde}
\newcommand{\wh}{\widehat} 
\newcommand{\mcl}{\mathcal}
\newtheorem{theorem}{Theorem}[section]
\newtheorem{lemma}[theorem]{Lemma}
\newtheorem{proposition}[theorem]{Proposition}
\newtheorem{corollary}[theorem]{Corollary}
\newtheorem{definition}[theorem]{Definition}
\newtheorem{example}[theorem]{\bf Example}
\theoremstyle{remark}
\newtheorem{remark}[theorem]{\bf Remark}
\numberwithin{equation}{section}
\newcommand{\C}{\mathbbm{C}}
\newcommand{\N}{\mathbbm{N}}
\newcommand{\Z}{\mathbbm{Z}}
\newcommand{\R}{\mathbbm{R}}
\renewcommand{\P}{\mathbbm{P}}
\begin{document}
	
	\title{Baxter permuton and Liouville quantum gravity}
	
	\author{
		Jacopo Borga\thanks{Stanford University, Department of Mathematics, \href{mailto:jborga@stanford.edu}{jborga@stanford.edu}.} $\qquad$ Nina Holden\thanks{University of Pennsylvania, Department of Mathematics, \href{mailto:xinsun@sas.upenn.edu}{xinsun@sas.upenn.edu}.} $\qquad$ Xin Sun\thanks{ETH Zurich, Department of Mathematics, \href{mailto:nina.holden@eth-its.ethz.ch}{nina.holden@eth-its.ethz.ch}.} $\qquad$ Pu Yu\thanks{Massachusetts Institute of Technology,
			Department of Mathematics, \href{mailto:puyu1516@mit.edu}{puyu1516@mit.edu}.}
	} 
	\date{  }
	
	\maketitle
	
	\begin{abstract}	
		The Baxter permuton is a random probability measure on the unit square which describes the scaling limit of uniform Baxter permutations. We determine an explicit formula for the density of the expectation of the Baxter permuton. This answers a question of Dokos and Pak (2014). 
		We also prove that all pattern densities of the Baxter permuton are strictly positive, distinguishing it from other permutons arising as scaling limits of pattern-avoiding permutations. Our proofs rely on a recent connection between the Baxter permuton and  Liouville quantum gravity (LQG) coupled with the Schramm-Loewner evolution (SLE).  The method works equally well for a two-parameter generalization of the Baxter permuton recently introduced by the first author, except that the density  is not as explicit. This new family of permutons, called \emph{skew Brownian permuton},   describes the scaling limit of a number of random constrained permutations. We finally observe that in the LQG/SLE framework, the expected proportion of  inversions in a skew Brownian permuton equals $\frac{\pi-2\theta}{2\pi}$ where $\theta$ is  the so-called imaginary geometry angle between a certain pair of SLE curves.
	\end{abstract}

	\section{Introduction}
	
	Baxter permutations were introduced by Glen Baxter in 1964 \cite{Bax64} while studying fixed points of commuting functions. 
	They are classical  examples of pattern-avoiding  permutations,  which have been intensively studied both in the probabilistic and combinatorial literature (see e.g.\ \cite{MR0250516,MR491652,MR555815,MR2028288,MR2679559,MR2763051,MR3882946}). They are known to be connected with various other interesting combinatorial structures, such as bipolar orientations \cite{BBMF11}, walks in cones \cite{KMSW19}, certain pairs of binary trees and a family of triples of non-intersecting lattice paths\cite{MR2763051}, and domino tilings of Aztec diamonds \cite{MR2679559}.  
	
	In recent years there has been an increasing interest in studying limits of random pattern-avoiding permutations.
	One approach is to look at the convergence of relevant statistics, such as  the number of cycles, the number of inversions,  or the length of the longest increasing subsequence. For a brief overview of this approach see e.g.\  \cite[Section 1.4]{borga2021random}. 
	The more recent  approach is to directly determine the scaling limits of permutation diagrams. Here given a permutation $\sigma$ of size $n$, its \emph{diagram} is a $n \times n$ table with $n$ points at position $(i,\sigma(i))$ for all $i\in[n]:=\{1,2,\dots,n\}$. (See Figure~\ref{fig:pattern_perm_exemp}, p.\ \pageref{fig:pattern_perm_exemp}, for an example.) Their scaling limits are called \emph{permutons}. See
	e.g.\  \cite[Section 2.1]{borga2021random} for an overview of this approach; and Section~\ref{sect:posit} and Appendix~\ref{sect:intro_patt} for an introduction to permutation pattern terminology.
	
	Dokos and Pak \cite{MR3238333}  studied the expected limiting permuton of the so-called \emph{doubly alternating Baxter permutations}. The authors raised the question of proving the existence of  the \emph{Baxter permuton} as the scaling limit of uniform Baxter permutations,  and determine its expected density. The existence of the Baxter permuton was established in \cite{BM20}  based on the  bijection between Baxter permutations and bipolar orientations. In  \cite{borga2021skewperm}, a two-parameter family of   permutons called the \emph{skew Brownian permuton} was introduced.  This family includes  the Baxter permuton and a well-studied one-parameter family of permutons, called the \emph{biased Brownian separable permuton} (\cite{bassino2018separable,bassino2017universal}), as special cases. 
	
	By~\cite{KMSW19,GHS16},  the scaling limit of random planar maps decorated with bipolar orientations is described by Liouville quantum gravity (LQG) decorated with two Schramm-Loewner evolution (SLE) curves. In \cite{borga2021skewperm}, the author built a direct connection between  the skew Brownian permuton (including the Baxter permuton)  and SLE/LQG (see also \cite{bgs22meanders} for further developments).   
	The main goal of the present paper is to use this connection to derive some properties of these permutons. In particular, we find an explicit formula for  the density of the intensity measure of the Baxter permuton (see Section \ref{sect:int_bax_perm} for definitions), which answers the aforementioned question of Dokos and Pak.  We also prove that all (standard) pattern densities of the Baxter permuton are strictly positive. The second result extends to the skew Brownian permuton except in one case where it is not true, namely for the biased Brownian separable permuton. 
	
	In the rest of the introduction,  we first state our  main results on the Baxter permuton in Section~\ref{sect:res_on_Bax_perm}. Then, in  Section~\ref{sect:con_perm_SBP}, we recall the construction of the skew Brownian permuton and state the corresponding  results. Finally, in Section~\ref{sect:rel_with_LQG} we review the connection with LQG/SLE and explain our proof techniques.
	
	\subsection{Main results on the Baxter permuton}\label{sect:res_on_Bax_perm}
	
	A Baxter permutation is a permutation which satisfies the following pattern avoidance property.
	
	\begin{definition}\label{def:baxter}
		A permutation $\sigma$ is a \emph{Baxter permutation} if it is not possible to find $i < j < k$ such that $\sigma(j+1) < \sigma(i) < \sigma(k) < \sigma(j)$ or $\sigma(j) < \sigma(k) < \sigma(i) < \sigma(j+1)$.
	\end{definition}
	
	Note that there are finitely many Baxter permutations of size $n$. Therefore it makes sense to consider a uniform Baxter permutation of size $n$.
	
	A Borel probability measure $\mu$ on the unit square $[0,1]^2$ is a \emph{permuton} if both of its marginals are uniform, i.e., $\mu([a,b]\times[0,1]) = \mu([0,1]\times[a,b])=b-a$ for any $0\le a<b\le 1$.
	A permutation $\sigma$ can be viewed as a permuton $\mu_{\sigma}$ by uniformly distributing mass to the squares $\{[\frac{i-1}{n}, \frac{i}{n}]\times  [\frac{\sigma(i)-1}{n}, \frac{\sigma(i)}{n}]: i \in [n]\}.$ More precisely,
	\begin{equation*}
		\mu_\sigma(A)
		= 
		n 
		\sum_{i=1}^n 
		\Leb
		\big([(i-1)/n, i/n]
		\times
		[(\sigma(i)-1)/n,\sigma(i)/n]
		\cap 
		A\big),
	\end{equation*}
	where $A$ is a Borel measurable set of $[0,1]^2$.
	
	For a \emph{deterministic} sequence of permutations $\sigma_n$, we say that $\sigma_n$ \emph{converge in the permuton sense} to a limiting permuton $\mu$, if the permutons $\mu_{\sigma_n}$ induced by $\sigma_n$ converge weakly to $\mu$.
	The set of permutons equipped with the topology of weak convergence of measures can be viewed as a compact metric space.  
	
	\begin{theorem}[{\cite[Theorem 1.9]{BM20}}]\label{thm:baxt_conv}
		Let $\sigma_n$ be a uniform Baxter permutation of size $n$. The following convergence w.r.t.\ the permuton topology holds:
		$
		\mu_{\sigma_n}\xrightarrow{d} 	\mu_B,
		$
		where $\mu_B$ is a random permuton called the \emph{Baxter permuton}.
	\end{theorem}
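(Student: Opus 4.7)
The plan is to encode Baxter permutations as walks via a bijection, pass to the scaling limit of the walks by an invariance principle, and then read off the limiting permuton from the limiting walk via a sufficiently continuous decoding procedure.

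Concretely, I would use the Kenyon--Miller--Sheffield--Wilson bijection between Baxter permutations of size $n$ and tandem walks of length $n$ in the non-negative quadrant $\mathbbm{Z}_{\geq 0}^2$ that start and end at the origin (equivalently, bipolar-oriented planar maps with $n$ edges), as cited in the introduction via \cite{KMSW19,BBMF11}. Under this bijection a uniform Baxter permutation corresponds to a uniform such walk, reducing the problem to identifying the scaling limit of these walks together with the decoding map. By the invariance principle for random walks conditioned to stay in a cone (Denisov--Wachtel and later refinements), the rescaled walk converges in the Skorokhod topology to a correlated two-dimensional Brownian excursion $(X_t, Y_t)_{t \in [0,1]}$ in the non-negative quadrant with correlation $-1/2$; this is the same process that, together with LQG, describes the scaling limit of bipolar-oriented planar maps in \cite{GHS16}.

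The next step is to express the permuton $\mu_\sigma$ as a measurable functional of the encoding walk, with good continuity properties at generic sample paths of the limiting excursion. Combinatorially, the KMSW bijection determines $\sigma(i)$ from a matching rule that pairs the $i$-th step of the walk with another step through running extrema of its two coordinates. In the continuum this should become a pairing between horizontal and vertical descent intervals of $(X, Y)$, from which one defines $\mu_B$ as the pushforward of Lebesgue measure on $[0,1]$ under $t \mapsto (t, \psi(t))$, where $\psi$ is the induced continuum matching. Since the space of permutons is compact under weak convergence, tightness of $\{\mu_{\sigma_n}\}$ is automatic; it then suffices to identify joint distributions of finitely many test integrals, equivalently expected pattern densities, as continuous functionals of $(X, Y)$ evaluated at $k$ independent uniform times.

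The main obstacle is proving continuity, at typical points, of the discrete matching rule and the convergence of the discrete matching to its continuum counterpart. One must rule out almost-sure pathologies of the Brownian cone excursion---ambiguous matchings, coincident extrema at the selected times, accumulation points where pairings are unstable under perturbation---using standard sample-path estimates for correlated Brownian excursions. Combined with the invariance principle for the walk and an interchange-of-limits argument at uniformly random marked times, this yields joint convergence of the $k$-point permuton statistics, identifying the limit as a well-defined random permuton $\mu_B$ and completing the proof of the stated convergence $\mu_{\sigma_n} \xrightarrow{d} \mu_B$.
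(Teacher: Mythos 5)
First, note that the paper you are working from does not prove Theorem~\ref{thm:baxt_conv} at all: it is quoted from \cite{BM20}, so the paper's ``proof'' is a citation, and your proposal has to be measured against the argument of Borga--Maazoun. At the level of strategy you are on their track: encode uniform Baxter permutations by tandem walks in the quadrant via the KMSW/bipolar-orientation bijection \cite{KMSW19,BBMF11}, apply a Denisov--Wachtel-type invariance principle to get convergence of the rescaled walk to the two-dimensional Brownian excursion of correlation $-1/2$ in the nonnegative quadrant, get tightness for free from compactness of the permuton space, and identify the limit through $k$-point (pattern) statistics.

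The genuine gap is the decoding step, which you compress into ``a pairing between horizontal and vertical descent intervals'' whose convergence you propose to check by standard sample-path estimates. Two problems. First, the discrete rule you describe (matching the $i$-th step with another step through running extrema of the two coordinates) is not how $\sigma$ is recovered from the walk; in the KMSW correspondence the permutation is read off from the interplay of the two interface trees of the bipolar orientation, and the device \cite{BM20} introduces to handle this is the coalescent-walk process: a family of one-dimensional walks $\{Z^{(u)}\}$ driven by the 2D walk, from which the permutation (and, in the limit, the permuton) is obtained exactly as in Definition~\ref{def:baxter-Z}, as the push-forward of Lebesgue measure under $(t,\varphi_{Z}(t))$. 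Second, the passage to the limit is not a routine continuity-at-typical-points argument: one needs existence, uniqueness and stability for the SDE \eqref{eq:flow_SDE_gen} driven by the Brownian excursion, whose indicator coefficients are discontinuous (this is \cite[Theorem 1.7]{borga2021skewperm} in the continuum), together with convergence of the discrete coalescent walks to these solutions for a.e.\ starting time $u$, before the joint convergence of $k$-point permuton statistics can be concluded. Without the coalescent-walk construction (or an equivalent) and its SDE characterization, the continuum ``matching'' $\psi$, and hence $\mu_B$ itself, is not well defined in your sketch, so the identification step cannot be carried out as stated; ruling out coincident extrema and ambiguous pairings by sample-path estimates does not substitute for this analysis.
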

	
	We present our main results on the Baxter permuton in the next section.
	
	\subsubsection{The intensity measure of the Baxter permuton}\label{sect:int_bax_perm}
	
	The Baxter permuton $\mu_B$ is a random probability measure on the unit square (with uniform marginals). 
	{Our first result is an explicit expression of its intensity measure, defined by $\bbE [\mu_{B}](\cdot)\coloneqq\bbE [\mu_{B}(\cdot)]$, which answers   \cite[Question 6.7]{MR3238333}.}
	
	\begin{theorem}\label{thm-baxter-density}
		Consider the Baxter permuton $\mu_B$. Define the function 
		\begin{equation}\label{eqn-rho}
			\rho(t, x, r):= \frac{1}{t^2} \left(\left(\frac{3rx}{2t}-1\right)e^{-\frac{r^2+x^2-rx}{2t}}+e^{-\frac{(x+r)^2}{2t}}\right).
		\end{equation}
		Then the intensity measure $\bbE [\mu_B]$ is absolutely continuous with respect to the Lebesgue measure on $[0,1]^2$. {Moreover}, it has the following density function 
		\begin{equation}\label{eqn-baxter-density}
			p_B(x, y) = c\int_{\max\{0, x+y-1\}}^{\min\{x, y\}}\int_{\bbR_{+}^4}\rho(y-z, \ell_1, \ell_2)\rho(z, \ell_2, \ell_3)\rho(x-z, \ell_3, \ell_4)\rho(1+z-x-y, \ell_4, \ell_1)\,\ d\ell_1 d\ell_2 d\ell_3 d\ell_4\, dz, 
		\end{equation}
		where $c$ is a normalizing constant.
	\end{theorem}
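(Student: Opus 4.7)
The approach relies on the SLE/LQG representation of $\mu_B$ established in \cite{borga2021skewperm}. In that representation, $\mu_B$ arises on an LQG surface equipped with two marked points (source and sink) and decorated by two coupled space-filling SLE-type curves $\eta_1, \eta_2$ from source to sink, each parameterized by LQG area on $[0,1]$. For a random point $p$ of the surface sampled from the LQG area measure, set $s(p) := \eta_1^{-1}(p)$ and $t(p) := \eta_2^{-1}(p)$; then $\bbE[\mu_B]$ is the joint law of $(s(p), t(p))$, so $p_B(x, y)$ equals the joint density of $(s(p), t(p))$ at $(x, y)$. My plan is to compute this density by cutting the LQG surface along $\eta_1, \eta_2$ and at $p$.

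Fix $(x, y)$ and let $z$ denote the LQG area of $\{q : s(q) < x,\ t(q) < y\}$. The other three ``quadrant areas'' are then $x-z$, $y-z$, and $1+z-x-y$, non-negative precisely when $z \in [\max\{0, x+y-1\}, \min\{x, y\}]$ and together summing to $1$. The four corresponding subsurfaces are bounded by sub-arcs of $\eta_1, \eta_2$ meeting cyclically at four marked points (the source, the sink, and the two ``sides'' of $p$ traversed by the two curves); let $\ell_1, \ldots, \ell_4$ denote the associated quantum boundary lengths. By the mating-of-trees framework for LQG/SLE (conformal welding together with the quantum Markov property), conditionally on $(z, \ell_1, \ldots, \ell_4)$ the four subsurfaces are independent quantum disks with prescribed areas and adjacent boundary-arc lengths. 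Integrating the product of their partition functions over $(z, \ell_1, \ldots, \ell_4)$ then produces a formula of the shape displayed in \eqref{eqn-baxter-density}.

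It remains to identify $\rho(a, \ell, \ell')$ in \eqref{eqn-rho} as the quantum-disk partition function for area $a$ with two adjacent boundary arcs of lengths $\ell, \ell'$. By mating of trees for quantum disks in the Baxter/bipolar regime, this partition function is a transition density for a pair of correlated Brownian motions in the non-negative quadrant with correlation $-1/2$, or equivalently (after a linear decorrelation) for planar Brownian motion in a cone of angle $\pi/3$. The closed form \eqref{eqn-rho} then follows from the reflection principle: the reflection group of this cone is the dihedral group of order $6$, and summing (with signs) the six reflected Gaussian kernels, after cancellations, collapses to the two-term expression in \eqref{eqn-rho}. Note that the two exponents in \eqref{eqn-rho} are exactly the squared distances $\sqrt{x^2+r^2-xr}$ (the $60^\circ$-norm) and $x+r$ one expects from reflections across the two boundary rays of the cone.

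The main obstacle will be the careful topological and measure-theoretic bookkeeping of the four-region decomposition --- in particular, identifying the $\ell_i$'s with the correct quantum boundary lengths at the four marked points, and verifying that the mating-of-trees encoding in the Baxter regime yields precisely the kernel $\rho$ --- together with the explicit reflection-principle computation that compresses six reflected Gaussians into the compact form of $\rho(t,x,r)$.
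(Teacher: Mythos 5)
Your outline follows the same route as the paper: sample a point from the LQG area measure on the doubly marked quantum sphere, cut into four regions with areas $y-z$, $z$, $x-z$, $1+z-x-y$, argue that these are independent quantum disks given their (cyclically matched) boundary lengths, identify the area law of the relevant disks with the duration of a Brownian excursion in a cone of angle $\pi/3$, and evaluate that law by the method of images. However, the step you justify by ``conformal welding together with the quantum Markov property'' hides the one idea your plan is missing: the conformal welding theorem (Theorem~\ref{thm-sph-welding}) applies to flow lines of an independent GFF emanating from a \emph{marked point} of the weight-$(4-\gamma^2)$ sphere, whereas your $p$ is a point sampled from the area measure. The bridge is the rerooting invariance of the marked points of the weight-$(4-\gamma^2)$ sphere (Proposition~\ref{prop-qs-resampling}), which lets one re-mark the surface at $p$ without changing its law while remaining independent of the imaginary-geometry field; the paper singles this out as the key technical step, and without it your conditional-independence claim does not follow. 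Two related corrections: the four regions are cut by the four \emph{flow lines} from $p$ (the outer boundaries of the two space-filling curves stopped at $p$), not by sub-arcs of the space-filling curves, and each disk is marked at $p$ and $\infty$; moreover the welding statement concerns the infinite measure $\mathcal{M}_2^{\textup{sph}}(4-\gamma^2)$, so passing to the unit-area sphere requires the disintegration over total area together with the scaling of $\mathcal{M}_2^{\textup{disk}}$ and $\mathcal{M}_2^{\textup{sph}}$ (the test-function argument of Proposition~\ref{prop-baxter-disk-1} and Lemma~\ref{lmm-area-scaling}), which is more than bookkeeping since it is exactly what reduces the kernel to $p_{W}(a,\ell_i,\ell_{i+1})$ with unit total area.

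On the explicit formula, your reflection-principle description is also an oversimplification: $\rho$ in \eqref{eqn-rho} is not a bare signed sum of six Gaussian kernels. One takes Iyengar's image formula for the joint law of the duration and exit point of Brownian motion killed on $\partial\mathcal{C}_{\pi/3}$, conditions on the exit position (whose marginal is computed via $z\mapsto z^3$), and then sends the starting point to the boundary ray; this first-order boundary limit is what produces the prefactor $\bigl(\frac{3rx}{2t}-1\bigr)$ (Proposition~\ref{prop-bm-duration}). Finally, the resulting conditional density carries the $\ell$-dependent factor $\frac{(\ell_1^3+\ell_2^3)^2}{18\ell_1^2\ell_2^2}$, which must be cancelled against the total mass $\bigl|\mathcal{M}_2^{\textup{disk}}(\gamma^2/2;\ell_1,\ell_2)\bigr|$ from Proposition~\ref{prop-disk-excursion} before the kernel becomes a global constant times $\rho$; your plan, which works with the normalized disk law, needs this cancellation (or a direct computation of the unnormalized area-and-boundary-length density) to land on \eqref{eqn-baxter-density}. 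With these ingredients supplied, your proposal coincides with the paper's proof.
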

	
	\begin{remark}
		As discussed in Section~\ref{sect:rel_tetra}, further computation of the integral \eqref{eqn-baxter-density} is tricky, as it involves integrating a four-dimensional Gaussian in the first quadrant. Nevertheless, this integral in $\bbR_4^+$ can be expressed as the volume function (and its derivatives) of a three-dimensional spherical tetrahedron as given in \cite{Mur12, AM14}.
	\end{remark}
	
	We highlight that the intensity measure of other universal random limiting permutons has been investigated in the literature. For instance, the intensity measure of the \emph{biased Brownian separable permuton}, was determined by Maazoun in \cite{MR4079636}. We recall that the biased Brownian separable permuton $\mu^q_S$, defined for all $q\in[0,1]$, is a one-parameter universal family of limiting permutons arising form pattern-avoiding permutations (see Section~\ref{sect:con_perm_SBP} for more explanations). In \cite[Theorem 1.7]{MR4079636}, it was proved that for all $q\in(0,1)$, the intensity measure $\bbE[\mu^q_S]$ of the biased Brownian separable permuton is absolutely continuous with respect to the Lebesgue measure on $[0,1]^2$. Furthermore, $\bbE[\mu^q_S]$  has the following density function 
	
	\begin{equation*}
		p^q_S(x, y) =\int_{\max\{0,x+y -1\}}^{\min\{x,y\}} \frac{3q^2(1-q)^2 \,da }
		{2\pi(a(x-a)(1-x-y+a)(y-a))^{3/2}{\left(\frac{q^2}{a}+\frac{(1-q)^2}{(x-a)}+\frac{q^2}{(1-x-y+a)}+\frac{(1-q)^2}{(y-a)}\right)^{5/2}}}.
	\end{equation*}
	The proof of \cite[Theorem 1.7]{MR4079636} relies on an explicit construction of the biased Brownian separable permuton $\mu^q_S$ from a one-dimensional Brownian excursion decorated with i.i.d.\ plus and minus signs. To the best of our knowledge this proof cannot be easily extended to the Baxter permuton case. 
	See the figures below for  some plots of $p_B(x, y)$ and $p^q_S(x, y)$ using numerical approximations of the integrals.
	
	\begin{figure}[ht]
		\centering
		\includegraphics[width=0.23\textwidth]{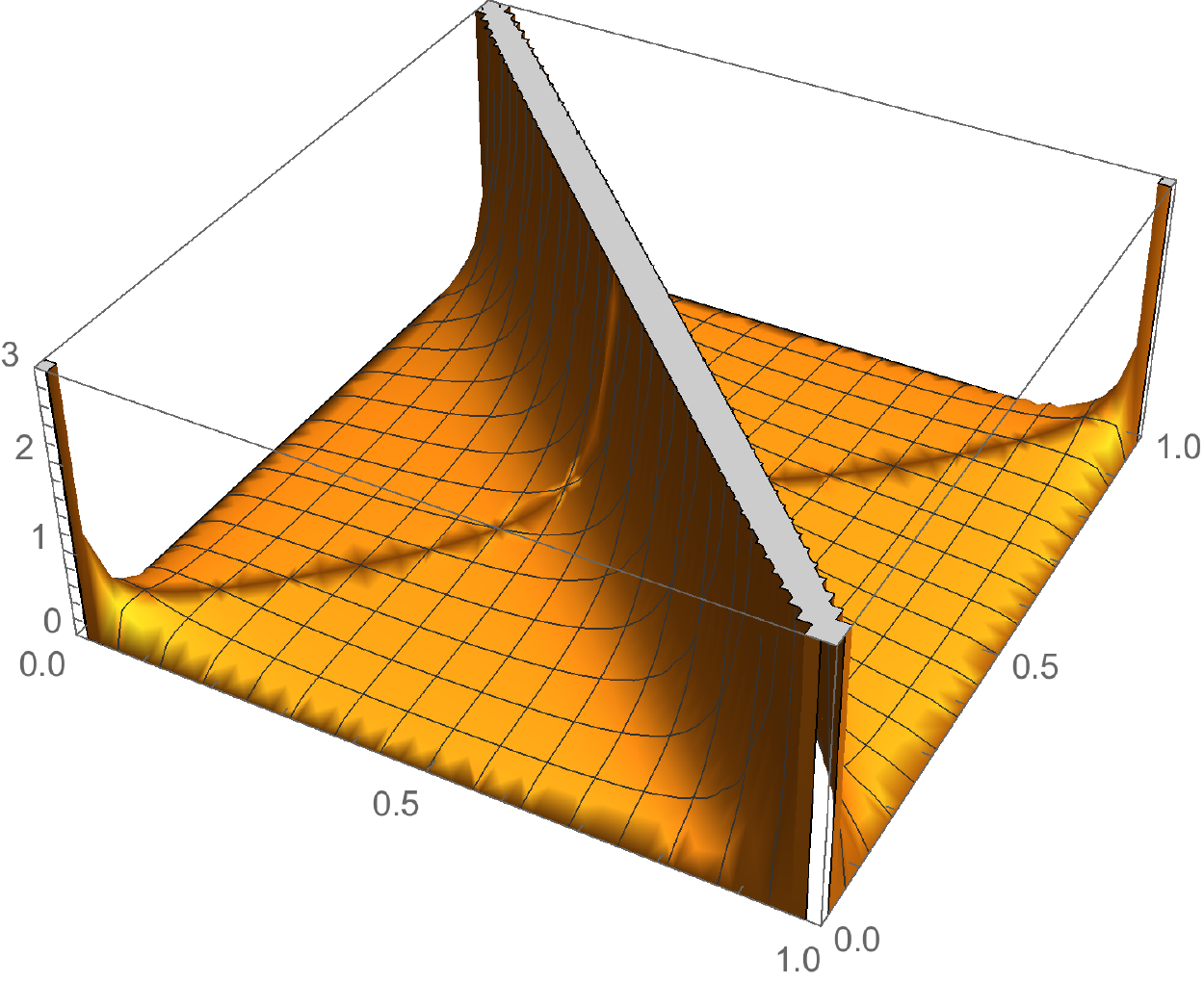}
		\includegraphics[width=0.23\textwidth]{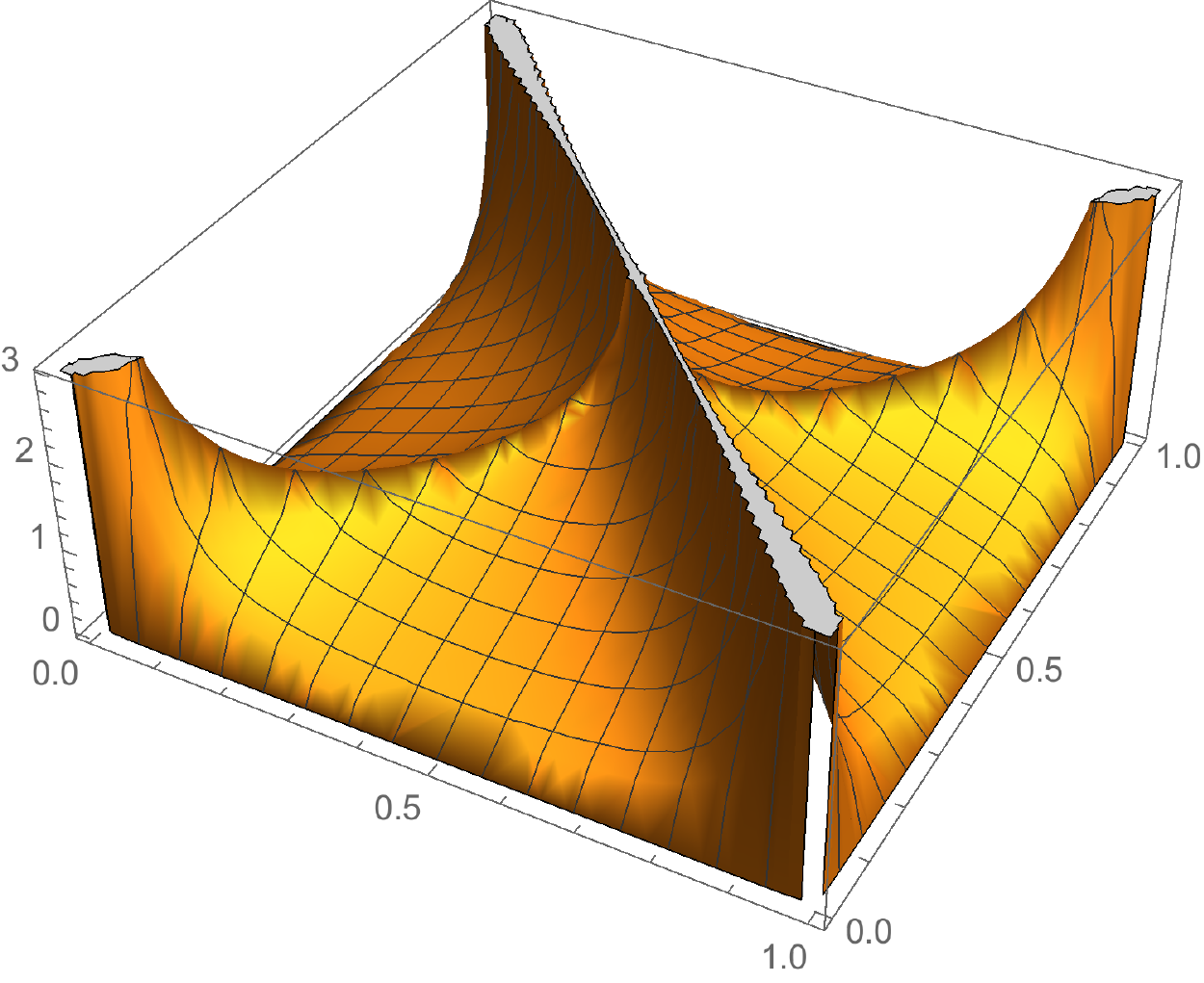}
		\includegraphics[width=0.23\textwidth]{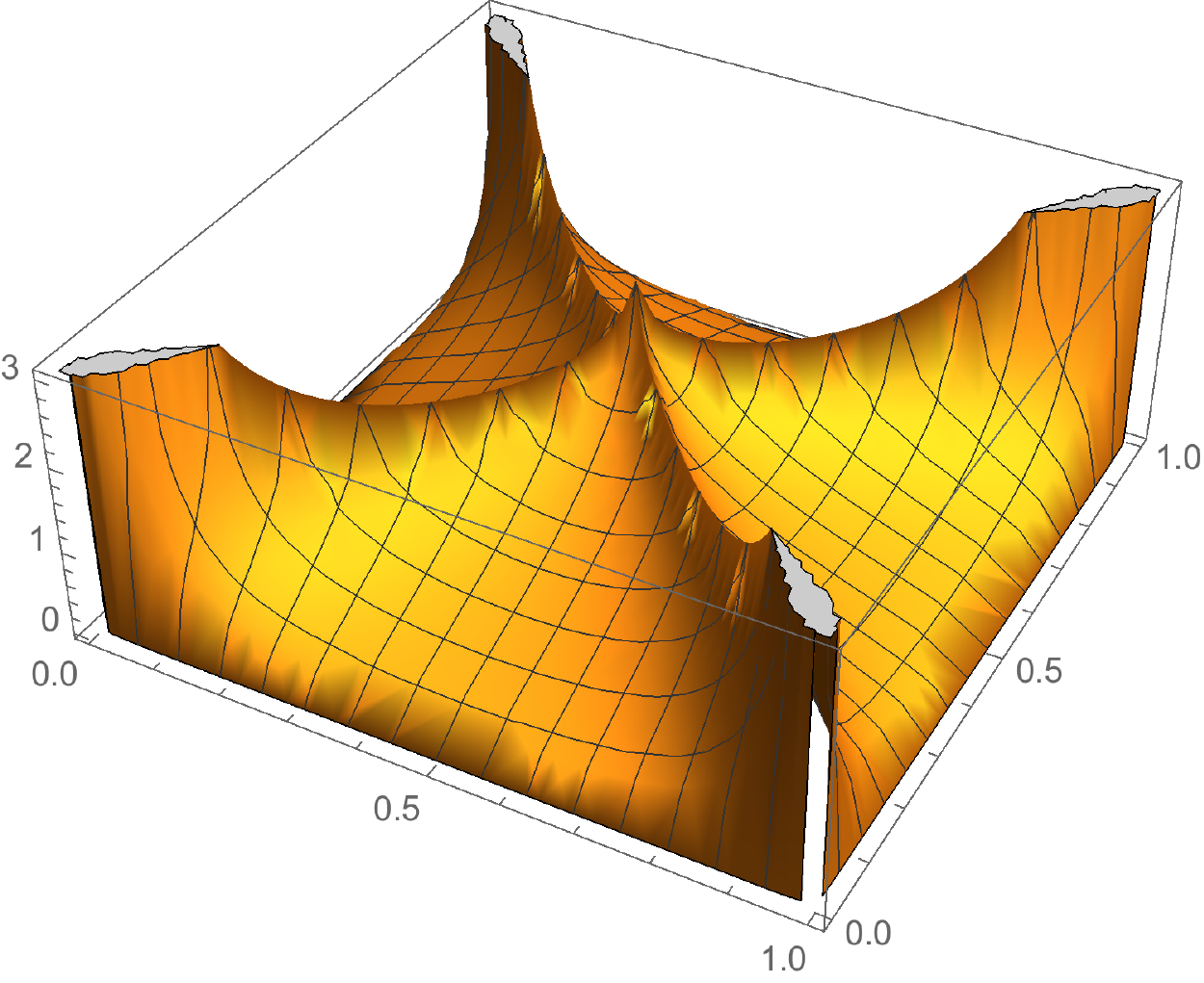}
		\includegraphics[width=0.23\textwidth]{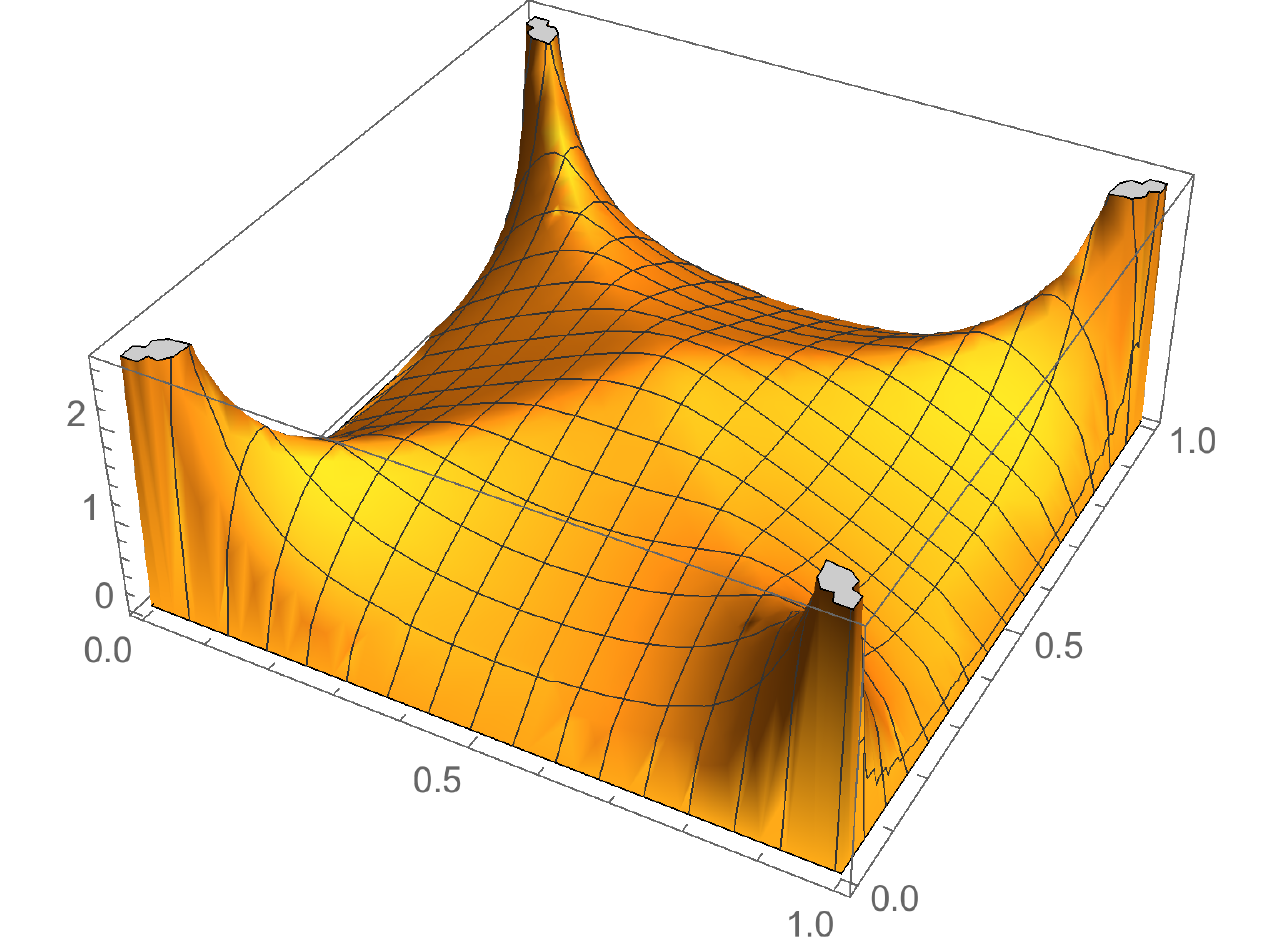}
		\caption{\textbf{From left to right:} The diagrams of the densities $p_S^{0.1}(x,y)$, $p_S^{0.4}(x,y)$, $p_S^{0.5}(x,y)$, and $p_B(x,y)$.}\label{fig:perm_densities1}
	\end{figure}
	
	\begin{figure}[ht]
		\centering
		\includegraphics[width=0.3\textwidth]{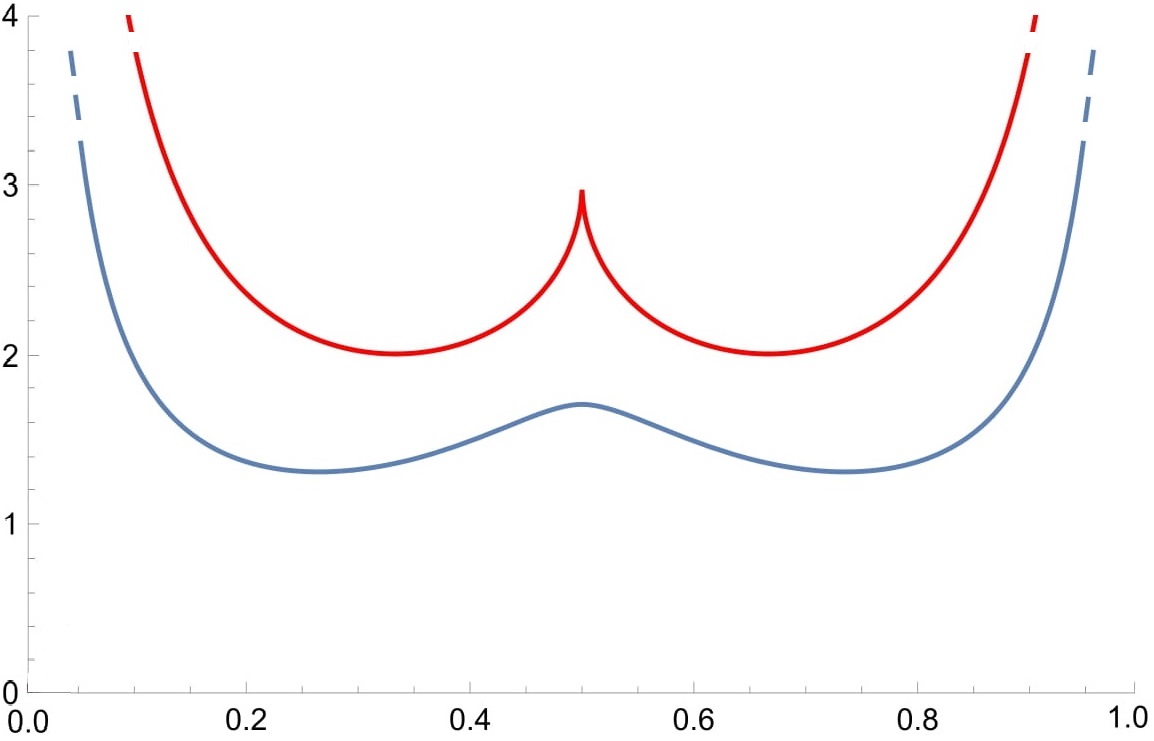}
		\includegraphics[width=0.3\textwidth]{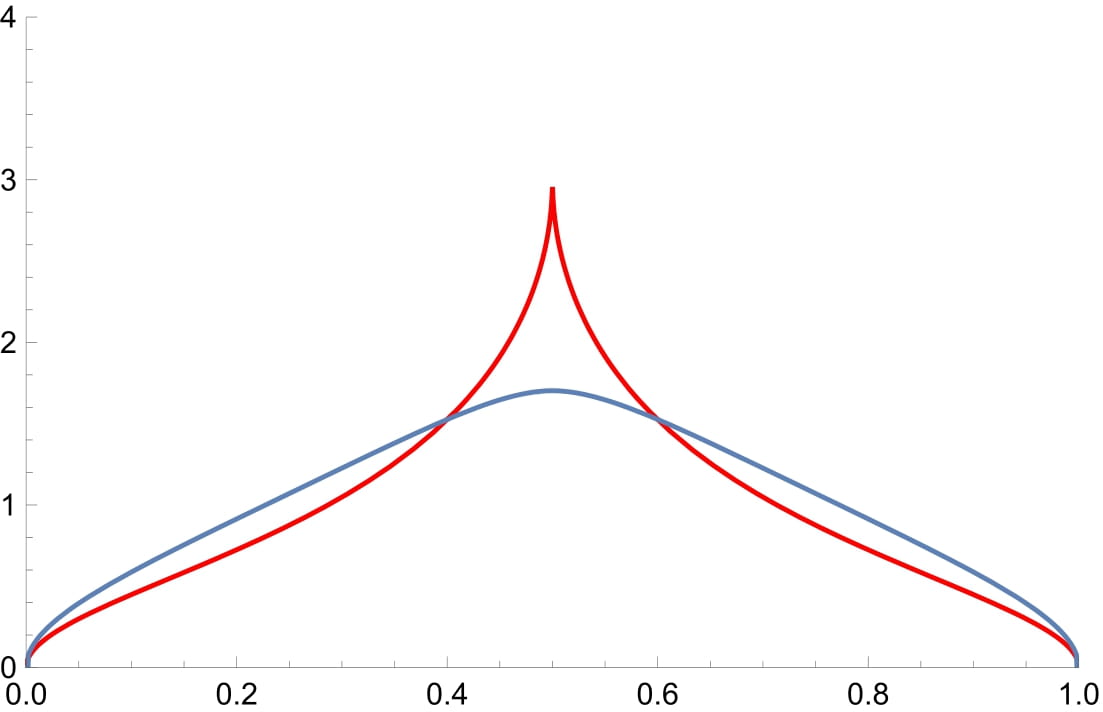}
		\includegraphics[width=0.3\textwidth]{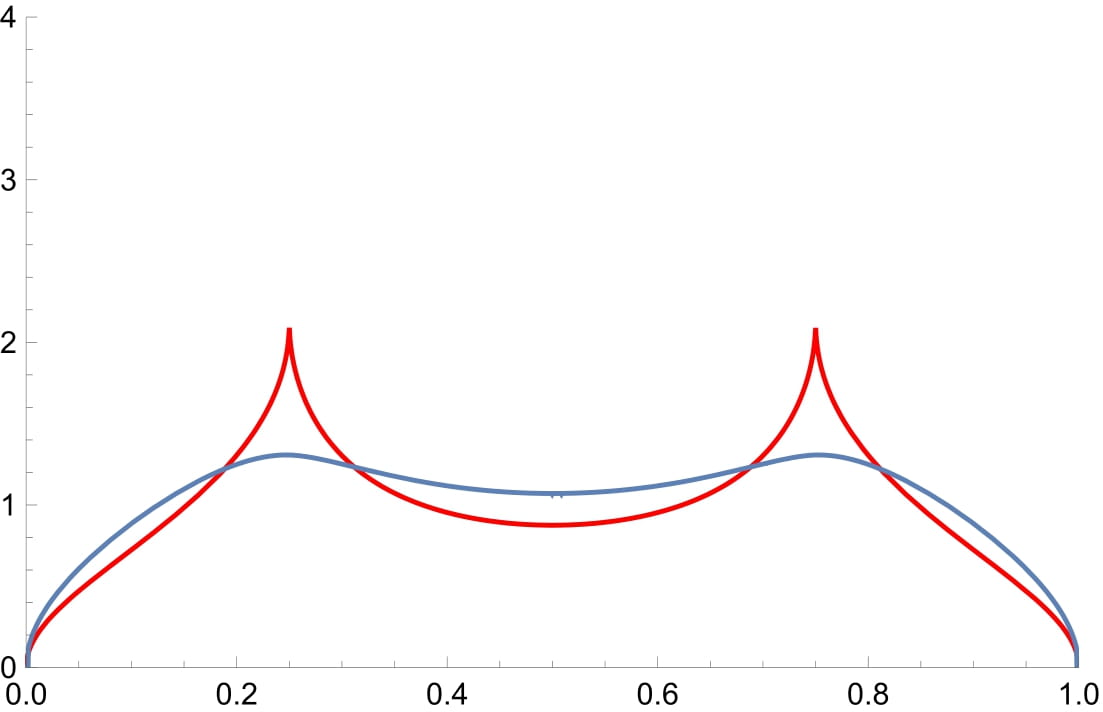}
		\caption{Some sections of the densities $p_S^{0.5}(x,y)$ and $p_B(x,y)$. \textbf{From left to right:} In red (resp.\ in blue) we plot the diagrams of $p_S^{0.5}(x,x)$ (resp.\ $p_B(x,x)$), $p_S^{0.5}(x,1/2)$ (resp.\ $p_B(x,1/2)$), and $p_S^{0.5}(x,1/4)$ (resp.\ $p_B(x,1/4)$).}\label{fig:perm_densities2}
	\end{figure}
	
	\subsubsection{Positivity of pattern densities for the Baxter permuton}\label{sect:posit}
	{Our second result is Theorem~\ref{thm:positivity_baxter} states that the Baxter permuton a.s.\ contains a positive density of every possible (standard) pattern.}  
	To state our result, we first define the permutation induced by $k$ points in the square $[0,1]^2$. Recall that $[n]=\left\{1,\dots,n\right\}$.
	
	\begin{definition}
		Let $(\vec{x},\vec{y})=((x_1,y_1),\dots,(x_k,y_k))=(x_i,y_i)_{i\in[k]}$ be a sequence of $k$ points in $[0,1]^2$ with distinct $x$ and $y$ coordinates. 
		The \emph{$x$-reordering} of $(\vec{x},\vec{y})$, denoted by $(x_{(i)},y_{(i)})_{i\in[k]}$, is the unique reordering of the sequence $(\vec{x},\vec{y})$ such that
		$x_{(1)}<\cdots<x_{(k)}$.
		The values $(y_{(1)},\ldots,y_{(k)})$ are then in the same
		relative order as the values of a unique permutation of size $k$, called the \emph{permutation induced by} $(\vec{x},\vec{y})$ and denoted by $\Perm_k(\vec{x},\vec{y})=\Perm_k((x_i,y_i)_{i\in[k]})$. 
	\end{definition}
	
	We now define the random permutation of size $k$ induced by a deterministic permuton. 
	
	\begin{definition}
		Let $\mu$ be a deterministic permuton and $k\in\mathbb Z_{>0}$. Let $(\vec{X},\vec{Y})=(X_i, Y_i)_{i\in [k]}$ be an i.i.d.\ sequence with distribution $ \mu$. 
		We denote by $\Perm_k(\mu,\vec{X},\vec{Y})$ the random permutation induced by $(\vec{X},\vec{Y})$. 
	\end{definition}
	
	We will also consider random permutations induced by random permutons $ \mu$. In order to do that, we need to construct a sequence
	$(X_i,Y_i)_{i \in [k]}$, where the points $(X_i,Y_i)$ are independent with common distribution $ \mu$ conditionally on $ \mu$. This is possible up to considering a new probability space where the joint distribution
	of $( \mu, (X_i,Y_i)_{i \in [k]})$ is determined as follows: for every positive measurable functional $H : \mathcal{M} \times [0, 1]^{2k} \to \mathbb R$,
	\begin{equation*}
		\mathbb E[H( \mu, (X_i,Y_i)_{i \in [k]})]=\mathbb E\left[\mathbb E\left[\int_{[0,1]^{2k}}H( \mu, (x_i,y_i)_{i \in [k]})\prod_{i=1}^k \mu(dx_i,dy_i)\middle| \mu\right]\right].
	\end{equation*}
	
	We now recall some standard notation related to permutation patterns; see Appendix~\ref{sect:intro_patt} for more details.  
	Let $\mathcal{S}_n$ be the set of permutations of size $n$ and $\mathcal{S}=\bigcup_{n\in\Z_{>0}} \mathcal{S}_n$ be the set of permutations of finite size. Fix $k\leq n$ and $\sigma\in\mathcal S_n$.
	Given a subset $I$ of cardinality $k$ of the indices of $\sigma$, the \emph{pattern induced by} $I$ in $\sigma$, denoted $\pat_{I}(\sigma)$, is the permutation corresponding to the diagram obtained by rescaling the points $(i,\sigma(i))_{i\in I}$ in a $|I|\times |I|$ table (keeping the relative position of the points). If $\pat_{I}(\sigma)=\pi\in \mathcal S_k$ we will say that $(\sigma(i))_{i\in I}$ is an \emph{occurrence} of $\pi$ in $\sigma$.
	We denote by $\occ(\pi,\sigma)$ the number of occurrences of a pattern $\pi$ in a permutation $\sigma$.
	Moreover, we denote by $\pocc(\pi,\sigma)$ the proportion of occurrences of $\pi$ in $\sigma,$ that is,
	\begin{equation*}
		\pocc(\pi,\sigma)=\frac{\occ(\pi,\sigma)}{\binom{n}{k}}.
	\end{equation*}
	We finally recall an important fact about permuton convergence. Suppose $( \sigma_n)_n$ is a sequence of random permutations converging in distribution in the permuton sense to a limiting random permuton $ \mu$, i.e.\ $ \mu_{ \sigma_n}\xrightarrow[]{d} \mu$. Then, from \cite[Theorem 2.5]{bassino2017universal}, it holds that
	$\left(\pocc(\pi, \sigma_n)\right)_{\pi\in\mathcal{S}}$ converges in distribution in the product topology as $n\to \infty$ to the random vector $\left(\pocc(\pi, \mu)\right)_{\pi\in\mathcal{S}}$, where the random variables $\pocc(\pi, \mu)$ are defined for all $\pi\in\mathcal{S}$ as follows
	\begin{equation}\label{eq:occ_permuton}
		\pocc(\pi, \mu)=\bbP(\Perm_k( \mu,\vec{X},\vec{Y})=\pi| \mu)=
		\int_{[0,1]^{2k}}
		\mathds{1}_{\left\{\Perm_k((x_i,y_i)_{i\in[k]})=\pi\right\}}
		\prod_{i=1}^k \mu(dx_i,dy_i).
	\end{equation}
	
	\begin{theorem}\label{thm:positivity_baxter}
		For all patterns $\pi\in\mathcal S$, it holds that 
		\begin{equation*}
			\pocc(\pi, \mu_{B})
			>0\qquad\text{a.s.}
		\end{equation*}
	\end{theorem}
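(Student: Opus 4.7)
The plan combines a simple combinatorial reduction with a support property for $\mu_B$ coming from the LQG/SLE representation recalled in Section~\ref{sect:rel_with_LQG}. Positivity of pattern densities will reduce to showing that $\mu_B$ assigns positive mass to every open rectangle of $(0,1)^2$ almost surely, and this last statement will be proved using the construction of $\mu_B$ from a pair of continuous coordinate processes associated with coupled SLE curves on an LQG surface.

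\textbf{Reduction to open rectangles.} Fix $\pi\in\mathcal{S}_k$ and choose $k$ pairwise disjoint open rectangles $R_1^\pi,\dots,R_k^\pi\subset (0,1)^2$ whose $x$-projections appear in the order $1,2,\dots,k$ along the first axis and whose $y$-projections appear in the order $\pi(1),\dots,\pi(k)$ along the second axis; concretely, one may take
\begin{equation*}
R_i^\pi \;=\; \bigl(\tfrac{i-1}{k}+\tfrac{1}{3k},\,\tfrac{i}{k}-\tfrac{1}{3k}\bigr)\times\bigl(\tfrac{\pi(i)-1}{k}+\tfrac{1}{3k},\,\tfrac{\pi(i)}{k}-\tfrac{1}{3k}\bigr).
\end{equation*}
Any $k$-tuple $(x_i,y_i)_{i\in[k]}$ with $(x_i,y_i)\in R_i^\pi$ induces the pattern $\pi$, so \eqref{eq:occ_permuton} gives $\pocc(\pi,\mu_B)\geq \prod_{i=1}^k \mu_B(R_i^\pi)$. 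Since $\mathcal{S}$ is countable, the theorem reduces to the single claim: for every open rectangle $R\subset(0,1)^2$, $\mu_B(R)>0$ almost surely.

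\textbf{Positivity on rectangles.} For this I would use the LQG/SLE representation of $\mu_B$ as the pushforward of Lebesgue measure on $[0,1]$ under a continuous map $t\mapsto (Z_1(t),Z_2(t))\in[0,1]^2$ built from the boundary-length coordinate processes of two coupled SLE curves on an LQG surface with the Baxter parameters. Then $\mu_B(R)$ equals the Lebesgue measure of the open set $\{t\in[0,1]:(Z_1(t),Z_2(t))\in R\}$, and the task is to show this set has positive Lebesgue measure a.s. A useful preliminary is $\bbE[\mu_B(R)]>0$: this follows from Theorem~\ref{thm-baxter-density} together with strict positivity of $p_B$ on the interior of $[0,1]^2$, which can be obtained either from the explicit formula or from the fact that $\bbE[\mu_B]$ has uniform marginals combined with continuity/analyticity of $p_B$.

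\textbf{Main obstacle.} Promoting $\bbE[\mu_B(R)]>0$ to $\mu_B(R)>0$ a.s.\ is the crux of the proof. The approach I have in mind is a local coupling/absolute-continuity argument at the level of the coordinate processes: fix a point $(x_0,y_0)\in R$ and a time $t_0$ at which $(Z_1(t_0),Z_2(t_0))$ has positive conditional density near $(x_0,y_0)$; on a small time window around $t_0$, couple $(Z_1,Z_2)$ with an unconditioned Brownian reference that visibly enters and remains in $R$ over a subinterval of positive length. This yields $\bbP[\mu_B(R)>0]>0$, which can then be boosted to an a.s.\ statement by a Kolmogorov-type 0-1 law for the driving Brownian/LQG pair or by a Fubini argument over the starting parameter $t_0$. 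Executing the local coupling is precisely where the LQG/SLE machinery is indispensable; it is also the step that must fail in the degenerate case of the biased Brownian separable permuton (whose support is not dense in $(0,1)^2$), which is consistent with the single exceptional case flagged in the introduction.
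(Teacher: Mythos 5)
Your first step (the lower bound $\pocc(\pi,\mu_B)\geq \prod_{i=1}^k\mu_B(R_i^\pi)$ from \eqref{eq:occ_permuton}) is correct, but it reduces the theorem to a statement that is false: it is \emph{not} true that every fixed open rectangle $R\subset(0,1)^2$ satisfies $\mu_B(R)>0$ a.s. Indeed, by Proposition~\ref{prop:permuton-q-area}, $\mu_B([x_1,x_2]\times[y_1,y_2])=\mu_h\bigl(\eta'_0([x_1,x_2])\cap\eta'_{\theta-\frac{\pi}{2}}([y_1,y_2])\bigr)$, and one can see that a.s.\ the support of $\mu_B$ is a proper closed subset of $[0,1]^2$: take a small closed neighborhood $\overline U$ of $\eta'_0(1/2)$ with $\mu_h(\overline U)<1$; since $\eta'_{\theta-\frac{\pi}{2}}$ is continuous and parametrized by quantum area, the time set $S=\{s:\eta'_{\theta-\frac{\pi}{2}}(s)\in\overline U\}$ is closed with Lebesgue measure $\mu_h(\overline U)<1$, hence its complement contains an interval $(a,b)$; choosing $\delta$ with $\eta'_0([1/2-\delta,1/2+\delta])\subset U$ gives $\mu_B([1/2-\delta,1/2+\delta]\times[a,b])=0$. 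So a.s.\ some (rational) rectangle carries zero mass, and therefore, by countability, there exists a \emph{fixed} rectangle $R_0$ with $\bbP[\mu_B(R_0)=0]>0$. Consequently no zero--one law can upgrade $\bbE[\mu_B(R)]>0$ (which does follow from Theorem~\ref{thm-baxter-density}) to $\mu_B(R)>0$ a.s.; the event $\{\mu_B(R)>0\}$ is a macroscopic functional of the surface and the two curves and is genuinely non-trivial. Relatedly, the dichotomy with the biased Brownian separable permuton is not a matter of the support being dense: also for $\rho\neq 1$ the permuton fails to have full support, yet all pattern densities are positive, so positivity cannot be routed through prescribed deterministic rectangles.

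The paper's proof avoids this by letting the witnessing regions be \emph{random} and determined by the field at a random small scale. One first shows (Lemma~\ref{prop-pos-dens-key0}, via the flow-line approximation results of \cite{MS17}) that with positive probability the north- and west-going flow lines started from $k$ marked points merge in the crossing/merging pattern encoding $\pi$, together with a stability condition (iii) for nearby starting points; this event depends only on the whole-plane GFF in a bounded region. By scale invariance of the GFF and tail triviality, a.s.\ this configuration occurs at some random dyadic scale $2^{-m_0}$, and then, since the ordering of points by the two space-filling counterflow lines is read off from the merging structure of their boundary flow lines, one gets the lower bound \eqref{eq:low_bound}: $\pocc(\pi,\mu_B)$ is at least the product of the $\mu_h$-masses of $k$ random balls, which are a.s.\ positive. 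In short, your reduction needs the permuton to charge prescribed rectangles, which fails; the correct argument produces, a.s., \emph{some} location and scale at which the pattern is realized, which is all that \eqref{eq:occ_permuton} requires.
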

	
	Our result  is \emph{quenched} in the sense that for almost every realization of the Baxter permuton $ \mu_{B}$,  it contains a strictly positive proportion of every  pattern $\pi\in\mathcal{S}$. Since pattern densities of random permutations converge to  pattern densities of the corresponding limiting random permuton, we have  the following corollary of  Theorems~\ref{thm:baxt_conv}~and~\ref{thm:positivity_baxter}.
	\begin{corollary}\label{corol:positivity3}
		Let $\sigma_n$ be a uniform Baxter permutation of size $n$. Then, for all $\pi\in\mathcal{S}$, we have that
		\begin{equation*}
			\lim_{n\to\infty}\pocc(\pi, \sigma_n)>0\qquad \text{a.s.}
		\end{equation*}
	\end{corollary}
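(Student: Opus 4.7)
The plan is to deduce Corollary~\ref{corol:positivity3} as an immediate consequence of Theorems~\ref{thm:baxt_conv} and~\ref{thm:positivity_baxter}, combined with the standard continuity of pattern densities under permuton convergence recalled right before Theorem~\ref{thm:positivity_baxter}.

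First, I invoke Theorem~\ref{thm:baxt_conv} to get $\mu_{\sigma_n}\xrightarrow{d}\mu_B$ in the permuton topology. Combined with the fact (cited from \cite[Theorem 2.5]{bassino2017universal} in the paragraph preceding Theorem~\ref{thm:positivity_baxter}) that permuton convergence in distribution transfers to joint convergence in distribution of all pattern densities in the product topology, this yields
\begin{equation*}
\left(\pocc(\pi,\sigma_n)\right)_{\pi\in\mathcal{S}} \xrightarrow{d} \left(\pocc(\pi,\mu_B)\right)_{\pi\in\mathcal{S}}.
\end{equation*}

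Next, I apply Skorokhod's representation theorem to realize all of these random variables on a common probability space on which the joint convergence above holds almost surely. On such a coupling, $\lim_{n\to\infty}\pocc(\pi,\sigma_n)=\pocc(\pi,\mu_B)$ almost surely for every $\pi\in\mathcal{S}$. Finally, Theorem~\ref{thm:positivity_baxter} provides $\pocc(\pi,\mu_B)>0$ almost surely, and combining the two statements gives $\lim_{n\to\infty}\pocc(\pi,\sigma_n)>0$ almost surely, as claimed.

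I do not anticipate any technical obstacle here: the argument is essentially a one-line transfer of positivity from the limiting permuton to the finite-$n$ permutations via a standard Skorokhod coupling, and all of the genuinely non-trivial work is contained in the quenched positivity statement of Theorem~\ref{thm:positivity_baxter}.
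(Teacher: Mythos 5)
Your proposal is correct and follows essentially the same route as the paper, which derives Corollary~\ref{corol:positivity3} directly from Theorems~\ref{thm:baxt_conv} and~\ref{thm:positivity_baxter} via the transfer of pattern densities under permuton convergence recalled before Theorem~\ref{thm:positivity_baxter}. Your explicit Skorokhod coupling step is a reasonable way to make precise the almost-sure formulation of the limit, which the paper leaves implicit.
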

	
	\subsection{Positivity of pattern densities for the skew Brownian permuton}\label{sect:con_perm_SBP}
	
	Permuton limits have been investigated for various models of random permutations. For many models, the permuton limits are deterministic, for instance, 
	Erd\"{o}s-Szekeres permutations \cite{MR2266895}, Mallows permutations \cite{starr2009thermodynamic}, random sorting networks \cite{dauvergne2018archimedean}, almost square permutations \cite{MR4149526,borga2021almost}, and permutations sorted with the \emph{runsort} algorithm \cite{alon2021runsort}. 
	For random constrained permutations which have a scaling limit, the limiting permutons appear to be random in many cases. 
	In~\cite{borga2021skewperm} a two-parameter family of permutons, called the \emph{skew Brownian permuton}, was introduced to cover most of the known examples.
	
	The skew Brownian permuton $\mu_{\rho,q}$ is indexed by $\rho\in(-1,1]$ and $q\in [0,1]$, and $\mu_{-1/2,1/2}$ coincides with Baxter permuton. 
	We now  recall the construction of the skew Brownian permuton for $\rho\in(-1,1)$ and $q\in [0,1]$.
	This is only for completeness since at the technical level we will use an alternative definition coming from SLE/LQG which has proven to be equivalent to Definition~\ref{def:baxter-Z} below; see Section~\ref{sect:rel_with_LQG}. We do not recall the $\rho=1$ case as our theorem only concerns $\rho\in(-1,1)$.
	
	For  $\rho\in(-1,1)$, let $( W_{\rho}(t))_{t\in \mathbb{R}_{\geq 0}}=( X_{\rho}(t), Y_{\rho}(t))_{t\in \mathbb{R}_{\geq 0}}$ be  
	a \emph{two-dimensional Brownian motion of correlation} $\rho$. This is a continuous two-dimensional Gaussian process such that the components $ X_{\rho}$ and $ Y_{\rho} $ are standard one-dimensional Brownian motions, and $\mathrm{Cov}( X_{\rho}(t), Y_{\rho}(s)) = \rho \cdot \min\{t,s\}$. Let  $( E_{\rho}(t))_{t\in [0,1]}$ be a \emph{two-dimensional Brownian loop of correlation} $\rho$. Namely, it is a two-dimensional Brownian motion of correlation $\rho$ conditioned to stay in the non-negative quadrant $\mathbb{R}_{\geq 0}^2$ and to end at the origin, i.e.\ $ E_{\rho}(1)=(0,0)$. 
	For $q\in [0,1]$,  consider the solutions of the following family of stochastic differential equations (SDEs) indexed by $u\in [0,1]$ and driven by $ E_{\rho} = ( X_{\rho}, Y_{\rho})$:
	
	\begin{equation}\label{eq:flow_SDE_gen}
		\begin{cases}
			d Z_{\rho,q}^{(u)}(t) = \mathds{1}_{\{ Z_{\rho,q}^{(u)}(t)\geq 0\}} d Y_{\rho}(t) - \mathds{1}_{\{ Z_{\rho,q}^{(u)}(t)< 0\}} d  X_{\rho}(t)+(2q-1)\cdot d L^{ Z_{\rho,q}^{(u)}}(t),& t\in(u,1),\\
			Z_{\rho,q}^{(u)}(t)=0,&  t\in[0,u],
		\end{cases} 
	\end{equation}
	where $ L^{ Z_{\rho,q}^{(u)}}(t)$ is the symmetric local-time process at zero of $ Z_{\rho,q}^{(u)}$, i.e.\ 
	\begin{equation*}
		L^{ Z^{(u)}_{\rho,q}}(t)=\lim_{\varepsilon\to 0}\frac{1}{2\varepsilon}\int_0^t\mathds{1}_{\left\{ Z^{(u)}_{\rho,q}(s)\in[-\varepsilon,\varepsilon]\right\}}ds.
	\end{equation*}
	The solutions to  the SDEs~\eqref{eq:flow_SDE_gen}  exist and are unique thanks to \cite[Theorem 1.7]{borga2021skewperm}.
	The collection of stochastic processes $\left\{ Z^{(u)}_{\rho,q}\right\}_{u\in[0,1]}$ is called the \emph{continuous coalescent-walk process} driven by $( E_{\rho},q)$. Here  $\left\{Z^{(u)}_{\rho,q}\right\}_{u\in[0,1]}$ is defined in the following sense: for a.e.\ $\omega,$ $ Z^{(u)}_{\rho,q}(\omega)$ is a solution for almost every $u\in[0,1]$. For more explanations see the discussion below \cite[Theorem 1.7]{borga2021skewperm}. Let
	
	\begin{equation*}
		\varphi_{ Z_{\rho,q}}(t)=
		\Leb\left( \big\{x\in[0,t)\,|\, Z_{\rho,q}^{(x)}(t)<0\big\} \cup \big\{x\in[t,1]\,|\, Z_{\rho,q}^{(t)}(x)\geq0\big\} \right), \quad t\in[0,1].
	\end{equation*}  
	
	\begin{definition}\label{def:baxter-Z}
		Fix $\rho\in(-1,1)$ and $q\in[0,1]$. The \emph{skew Brownian permuton} of parameters $\rho, q$, denoted $ \mu_{\rho,q}$, is the push-forward of the Lebesgue measure on $[0,1]$ via the mapping $(\mathbb{I},\varphi_{ Z_{\rho,q}})$, that is
		\begin{equation*}
			\mu_{\rho,q}(\cdot)=(\mathbb{I},\varphi_{ Z_{\rho,q}})_{*}\Leb (\cdot)= \Leb\left(\{t\in[0,1]\,|\,(t,\varphi_{ Z_{\rho,q}}(t))\in \cdot \,\}\right).
		\end{equation*} 
	\end{definition}
	
	We mention that it is also possible to generalize the previous construction when $\rho=1$. Then the permuton $\mu_{1,q}$  coincides with the biased Brownian separable permuton $\mu^{1-q}_S$ of parameter $1-q$ mentioned before; see \cite[Section 1.4 and Theorem 1.12]{borga2021skewperm} for further explanations.
	
	We now summarize the list of known random permutations which have the skew Brownian permuton as scaling limit.
	Uniform separable permutations \cite{bassino2018separable} converge to $ \mu_{1,1/2}$. Uniform permutations in  proper substitution-closed classes \cite{bassino2017universal, MR4115736} or classes having a finite combinatorial specification for the substitution decomposition \cite{bassino2019scaling} converge (under some technical assumptions) to $ \mu_{1,q}$, where the parameter $q$ depends on the chosen class.
	Uniform Baxter permutations converge to $\mu_{-1/2,1/2}$, namely the Baxter permuton. Uniform semi-Baxter permutations \cite{borga2021permuton}, converge to $ \mu_{\rho,q}$, where $\rho=-\frac{1+\sqrt 5}{4}\approx -0.8090$ and 
	$q=1/2$. Uniform strong-Baxter permutations \cite{borga2021permuton}, converge to $ \mu_{\rho,q}$, where
	$\rho\approx-0.2151$ is the unique real solution of the polynomial $1+6\rho+8\rho^2+8\rho^3$
	and $q\approx0.3008$ is the unique real solution of the polynomial $-1+6q-11q^2+7q^3$.
	
	We will not give the detailed definitions of all random constrained  permutations mentioned above but emphasize an important division. On the one hand, models converging to $\mu_{\rho,q}$ with $\rho\neq 1$ are similar to Baxter permutations in the following sense: their constraints are not defined by avoiding certain patterns completely, but only avoiding them when the index locations satisfy certain additional conditions; see e.g.\ Definition~\ref{def:baxter}. We  say that such permutations avoid \emph{generalized}  patterns.
	On the other hand, models converging towards the biased Brownian separable permuton  $\mu_{1,q}$, they avoid a certain set of patterns completely. For example, separable permutations avoid the patterns $2413$ and $3142$. We  say that such permutations avoid \emph{(standard)}  patterns. (Here the word \emph{standard} is added to distinguish from generalized patterns.)
	
	Our next theorem, which generalizes Theorem~\ref{thm:positivity_baxter}, shows that in the scaling limit, the division between $\rho\neq 1$ and $\rho=1$ becomes the following. On the one hand, for $\rho\neq 1$, the permuton  $\mu_{\rho,q}$  almost surely admits a positive density of any (standard) pattern. On the other hand, the biased Brownian separable permuton  $\mu_{1,q}$ presents a zero density of some (standard) patterns. For instance, $\mu_{1,q}$ almost surely avoids all the (standard) patterns that are not separable; see \cite[Definition 5.1]{bassino2017universal}.
	
	\begin{theorem}\label{thm:positivity}
		For all $(\rho,q)\in(-1,1)\times(0,1)$ and all (standard) patterns $\pi\in\mathcal S$, it holds that 
		\begin{equation*}
			\pocc(\pi, \mu_{\rho,q})
			>0\qquad \text{a.s.}
		\end{equation*}
	\end{theorem}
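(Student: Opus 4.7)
The plan is to first reduce the positivity of $\pocc(\pi,\mu_{\rho,q})$ to a structural property of the function $\varphi_{Z_{\rho,q}}$, and then verify that property using the SLE/LQG description of $\mu_{\rho,q}$ recalled in Section~\ref{sect:rel_with_LQG}.

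By Definition~\ref{def:baxter-Z}, $\mu_{\rho,q}=(\mathbb{I},\varphi_{Z_{\rho,q}})_*\mathrm{Leb}$, so formula \eqref{eq:occ_permuton} becomes
\begin{equation*}
\pocc(\pi,\mu_{\rho,q})=\int_{[0,1]^k}\mathds{1}\big\{\Perm_k\big((t_i,\varphi_{Z_{\rho,q}}(t_i))_{i\in[k]}\big)=\pi\big\}\,dt_1\cdots dt_k.
\end{equation*}
Fix $\pi\in\mathcal{S}_k$. To prove $\pocc(\pi,\mu_{\rho,q})>0$ a.s., it suffices to exhibit, almost surely, disjoint intervals $I_1<\cdots<I_k$ and $J_1<\cdots<J_k$ of positive length in $[0,1]$ such that $\varphi_{Z_{\rho,q}}(I_i)\subset J_{\pi(i)}$ for every $i$: any $(t_1,\ldots,t_k)\in I_1\times\cdots\times I_k$ then induces the pattern $\pi$, giving $\pocc(\pi,\mu_{\rho,q})\ge\prod_i|I_i|>0$. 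This reduces the problem to a richness property of the graph of $\varphi_{Z_{\rho,q}}$.

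For $(\rho,q)\in(-1,1)\times(0,1)$, the skew Brownian permuton admits an equivalent description, to be reviewed in Section~\ref{sect:rel_with_LQG}, in terms of two imaginary-geometry flow lines $\eta_1,\eta_2$ of a GFF sampled inside an independent $\gamma$-LQG disk, making a non-degenerate angle $\theta=\theta(\rho,q)\in(0,\pi)$ with one another. A point $z$ in the disk corresponds to the permuton point $(F_1(z),F_2(z))\in[0,1]^2$, where $F_j(z)$ is the normalized LQG mass visited by $\eta_j$ up to $z$. The desired intervals $I_i, J_{\pi(i)}$ above translate into asking that $k$ disjoint open sub-regions $D_1,\ldots,D_k$ of the LQG disk can be chosen with $F_1(D_i)\subset I_i$ and $F_2(D_i)\subset J_{\pi(i)}$. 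I would establish the positive probability of this event by combining the full topological support of the LQG measure, the absolute continuity and re-rooting properties of the GFF/SLE, and the transversality of $\eta_1$ and $\eta_2$ when $\theta\in(0,\pi)$: the latter ensures that locally the pair $(F_1,F_2)$ is a non-degenerate continuous map whose two coordinates can be varied essentially independently on small scales, so that any prescribed combinatorial arrangement of $k$ points in the $(F_1,F_2)$-plane is realizable.

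To pass from positive probability to almost-sure positivity, I would invoke a $0$--$1$ law for the LQG/SLE construction---for example, tail triviality of the GFF, or the scaling/re-rooting symmetries of the $\gamma$-LQG disk---showing that the event $\{\pocc(\pi,\mu_{\rho,q})>0\}$ takes value in $\{0,1\}$; Step~2 then rules out probability $0$. Since $\mathcal{S}$ is countable, intersecting over all $\pi$ gives the statement simultaneously for every pattern. The main obstacle is the quantitative part of Step~2: converting the transversality of the two SLE flow lines into an explicit ``all $k$-patterns are locally attainable'' statement. This is exactly where the hypothesis $\rho\ne 1$ is essential: when $\rho=1$ the two curves coincide ($\theta=0$), the resulting $(F_1,F_2)$-image is monotone, and non-separable patterns have density zero, which is why the biased Brownian separable permuton is excluded from the theorem.
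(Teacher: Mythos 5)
Your high-level skeleton (realize the pattern on $k$ small regions, show this configuration has positive probability via the SLE/LQG description, then upgrade to an almost sure statement by a scaling/zero--one argument) does match the strategy of the paper, and your initial reduction via $\varphi_{Z_{\rho,q}}$ to finding intervals $I_1<\cdots<I_k$, $J_1<\cdots<J_k$ with $\varphi_{Z_{\rho,q}}(I_i)\subset J_{\pi(i)}$ is sound. However, the core of the theorem is exactly the step you dispose of with the word ``transversality'', and as stated it contains a genuine gap. The two space-filling curves $\eta'_0$ and $\eta'_{\theta-\frac\pi2}$ are \emph{not} independent --- they are deterministic functionals of the same whole-plane GFF $\wh h$ --- so the claim that ``the two coordinates can be varied essentially independently on small scales'' is unsupported and, taken literally, false; this is precisely why the paper must prove Lemma~\ref{prop-pos-dens-key0} (via Lemma~\ref{prop-pos-dens-key}), i.e., that the north-going and angle-$\theta$ flow lines started from $k$ points placed according to $\pi$ merge with one another in the prescribed left/right order with positive probability. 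Doing so requires the quantitative imaginary-geometry inputs (Lemmas~\ref{prop-channel-hit-interior} and~\ref{prop-channel-hit-bdy}, i.e.\ \cite[Lemmas 3.8--3.9]{MS17}), the crossing/merging criterion of Lemma~\ref{prop-hit-merge-angle}, and careful bookkeeping of boundary data and winding (Definition~\ref{def-nice}); none of this is replaceable by ``full support of the LQG measure plus rerooting''. Your explanation of why $\rho\neq1$ matters is also incorrect: $\rho=1$ corresponds to the degenerate boundary case $\gamma=2$ (where $\chi=0$ and the imaginary-geometry coupling breaks down), not to $\theta=0$; indeed $\theta=0$ corresponds to $q=1/2$, which includes the Baxter permuton, where positivity does hold.

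The zero--one step is also not justified as written. The event $\{\pocc(\pi,\mu_{\rho,q})>0\}$ depends on the full pair $(h,\wh h)$ and is not obviously measurable with respect to any tail $\sigma$-algebra, nor obviously invariant under a symmetry with a known ergodicity property. The paper avoids this issue by applying scale invariance and tail triviality not to the positivity event itself but to the scale-indexed events $E(m,\wh h)$, which are determined by $\wh h|_{B_{2^{-m+2}k}(0)}$; once some $E(m_0,\wh h)$ occurs, the pattern density is bounded below by the product of the $\mu_h$-masses of $k$ small balls as in \eqref{eq:low_bound}, which is positive for every realization of the independent field $h$. To make your argument complete you would need either to reproduce this two-step structure or to supply a genuine ergodicity/tail-triviality argument for $\{\pocc(\pi,\mu_{\rho,q})>0\}$ directly, and, above all, to prove the positive-probability flow-line configuration statement that your Step~2 currently only asserts.
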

	
	Note that the latter theorem answers \cite[Conjecture 1.20]{borga2021skewperm}. By Theorem~\ref{thm:positivity},  if a sequence of random permutations avoiding (standard) patterns converges to a skew Brownian permuton then it has to be the biased Brownian separable permuton. Namely, we have the following result.
	\begin{corollary}\label{corol:positivity}
		Let $\mathcal C$ be a family of permutations avoiding (standard) patterns. Let $ \sigma_n$ be a random permutation of size $n$ in $\mathcal C$. Assume that for some $(\rho,q)\in(-1,1]\times(0,1)$ it holds that $ \mu_{ \sigma_n}\xrightarrow[n\to\infty]{d} \mu_{\rho,q}$.
		Then $\rho=1$.
	\end{corollary}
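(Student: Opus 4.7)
The plan is to derive Corollary~\ref{corol:positivity} as an essentially immediate consequence of Theorem~\ref{thm:positivity}, by contrapositive and a limit-of-pattern-densities argument. Since $\mathcal{C}$ is a family of permutations defined by avoidance of (standard) patterns, there exists at least one pattern $\pi^*\in\mathcal{S}$ such that $\occ(\pi^*,\sigma)=0$ for every $\sigma\in\mathcal{C}$; in particular $\pocc(\pi^*,\sigma_n)=0$ almost surely for every $n$.

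Next, I would invoke the permuton-convergence fact recalled in the excerpt just before Theorem~\ref{thm:positivity_baxter}: since $\mu_{\sigma_n}\xrightarrow{d}\mu_{\rho,q}$, the joint convergence of all (proportions of) pattern occurrences \cite[Theorem 2.5]{bassino2017universal} gives
\begin{equation*}
\pocc(\pi^*,\sigma_n)\xrightarrow[n\to\infty]{d}\pocc(\pi^*,\mu_{\rho,q}).
\end{equation*}
Since the left-hand side is identically zero, we conclude $\pocc(\pi^*,\mu_{\rho,q})=0$ almost surely.

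Now suppose for contradiction that $\rho\in(-1,1)$. Since by hypothesis $q\in(0,1)$, the pair $(\rho,q)$ lies in the range covered by Theorem~\ref{thm:positivity}, so $\pocc(\pi^*,\mu_{\rho,q})>0$ almost surely. This contradicts the conclusion of the previous step. Hence $\rho=1$, which is exactly the claim.

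The argument has essentially no obstacle once Theorem~\ref{thm:positivity} is granted; the only point requiring a line of justification is the extraction of a single forbidden pattern $\pi^*$ from the avoidance hypothesis on $\mathcal{C}$. This is immediate from the definition of a class of permutations avoiding standard patterns, which is, by assumption, a proper restriction given by a nonempty set of forbidden patterns; any element of this set can be chosen as $\pi^*$.
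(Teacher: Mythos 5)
Your proof is correct and follows exactly the route the paper intends: the corollary is stated as an immediate consequence of Theorem~\ref{thm:positivity} together with the convergence of pattern proportions from \cite[Theorem 2.5]{bassino2017universal}, which is precisely your contrapositive argument via a forbidden pattern $\pi^*$. No gaps; the extraction of a nonempty set of forbidden patterns is handled adequately.
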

	
	\subsection{Relation with SLE and LQG}\label{sect:rel_with_LQG}
	
	We now review the connection between the skew Brownian permuton and SLE/LQG  established in \cite[Theorem 1.17]{borga2021skewperm}. Then we explain our proof techniques. Precise definitions and more background on various SLE/LQG related objects   will be given in Section~\ref{sect:perm-LQG-cor}.
	
	\subsubsection{The skew Brownian permuton and the SLE-decorated  quantum sphere}\label{sect:sbp_lqg_rel}
	
	Fix $\gamma \in(0,2)$ and some angle $\theta\in[-\frac{\pi}{2}, \frac{\pi}{2}]$. In what follows we consider:
	\begin{itemize}
		\item a unit-area $\gamma$-Liouville quantum sphere $(\wh{{\mathbb C}}, h, 0, \infty)$ with two marked points at 0 and $\infty$ and associated $\gamma$-LQG area measure $\mu_h$ (see Definition~\ref{def-quantum-sphere});
		\item an independent whole-plane GFF $\wh h$ (see Section~\ref{sect:gff});
		\item two space-filling SLE$_{\kappa'}$ counterflow lines of $\wh{h}$ in $\wh{{\mathbb C}}$ with angle $0$ and ${\theta-\frac{\pi}{2}}$ constructed from angle $\frac{\pi}{2}$ and $\theta$ flow lines  with $\kappa'=16/\gamma^2$ (see Section~\ref{sect:sle_ig}). We denote these two space-filling SLE$_{\kappa'}$ curves from $\infty$ to $\infty$ by $\eta'_{0}$ and $\eta'_{\theta-\frac{\pi}{2}}$.
	\end{itemize}    
	We emphasize the independence of the counterflow lines and the quantum sphere. In addition, we assume that the curves $\eta'_{0}$ and $\eta'_{\theta-\frac{\pi}{2}}$ are parametrized so that $\eta'_{0}(0)=\eta'_{0}(1)=\eta'_{\theta-\frac{\pi}{2}}(0)=\eta'_{\theta-\frac{\pi}{2}}(1)=\infty$ and $\mu_h(\eta'_{0}([s,t])) =\mu_h(\eta'_{\theta-\frac{\pi}{2}}([s,t])) = t-s$ for $0\le s<t\le 1$.
	We have the following result.
	
	\begin{theorem}[{\cite[Theorem 1.17]{borga2021skewperm}}]\label{thm:sbpfromlqg}
		Fix $\gamma\in(0,2)$ and $\theta\in[-\frac\pi 2,\frac\pi 2]$. Let $(\wh{{\mathbb C}}, h, 0, \infty)$ and $( \eta'_0, \eta'_{\theta-\frac{\pi}{2}})$ be the {unit-area} $\gamma$-Liouville quantum sphere and the pair of space-filling SLE$_{\kappa'}$ introduced above. 
		For $t\in[0, 1]$, let $\psi_{\gamma,\theta}(t)\in[0, 1]$ denote the first time\footnote{We recall that space-filling SLE curves have multiple points. Nevertheless, for each $z\in\mathbb C$, a.s. $z$ is not a multiple point of $\eta'_{\theta-\frac{\pi}{2}}$, i.e., $\eta'_{\theta-\frac{\pi}{2}}$ hits $z$ exactly once. Since $h$ is independent from $(\eta'_0,\eta'_{\theta-\frac{\pi}{2}})$ and $\eta'_0$ and $\eta'_{\theta-\frac{\pi}{2}}$ are parametrized by $\mu_{h}$-mass, a.s. the set
			of times $t \in [0, 1]$ such that $\eta'_0$ is a multiple point of  $\eta'_{\theta-\frac{\pi}{2}}$ has zero Lebesgue measure.} at which $\eta'_{\theta-\frac{\pi}{2}}$ hits the point $\eta'_0(t)$.
		Then the random permuton
		\[(\Id,\psi_{\gamma,\theta})_{*}\Leb\] 
		is a skew Brownian permuton of parameter $\rho=-\cos(\pi\gamma^2/4)\in(0,1)$ and $q=q_\gamma(\theta)\in[0,1]$.
	\end{theorem}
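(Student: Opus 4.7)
The plan is to couple the two sides via the mating-of-trees encoding: realize the target driving process $(X_\rho, Y_\rho)$ as the left/right quantum-boundary-length processes of $\eta'_0$, realize the auxiliary SDE solutions $Z^{(u)}_{\rho,q}$ in terms of imaginary-geometry flow lines relating $\eta'_{\theta-\pi/2}$ to $\eta'_0$, and finally identify the visiting map $\psi_{\gamma,\theta}$ with the coalescent-walk functional $\varphi_{Z_{\rho,q}}$.

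First I would invoke the Duplantier--Miller--Sheffield mating-of-trees theorem for the unit-area quantum sphere $(\wh{\mathbb C}, h, 0, \infty)$ decorated by its $\mu_h$-parametrized space-filling SLE$_{\kappa'}$ curve $\eta'_0$. This identifies the pair $t\mapsto (X_\rho(t), Y_\rho(t))$ of $\gamma$-LQG lengths of the left and right boundaries of $\eta'_0([0,t])$ with a two-dimensional Brownian excursion in $\mathbb{R}_{\geq 0}^2$ of correlation $\rho = -\cos(\pi\gamma^2/4)$ ending at the origin. This provides exactly the driver of the SDE \eqref{eq:flow_SDE_gen} on the LQG side.

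Next, for each $u\in[0,1]$, let $\eta^\theta_u$ denote the angle-$\theta$ flow line of $\wh h$ started at $\eta'_0(u)$. By the imaginary-geometry construction of $\eta'_{\theta-\pi/2}$ as the space-filling counterflow line built from angle-$\theta$ flow lines, the curve $\eta^\theta_u$ is precisely the interface separating the points visited by $\eta'_{\theta-\pi/2}$ before $\eta'_0(u)$ from those visited after. For $t>u$ I would set $Z^{(u)}(t)$ equal to the signed $\gamma$-LQG length of the portion of $\eta^\theta_u$ lying inside $\eta'_0([u,t])$, with the sign recording on which side of $\eta^\theta_u$ the point $\eta'_0(t)$ lies. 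With this definition the event $\{Z^{(u)}(t) < 0\}$ coincides with $\{\psi_{\gamma,\theta}(t) < u\}$, and a symmetric assignment swapping the roles of $u$ and $t$ yields the complementary contribution to $\varphi_{Z_{\rho,q}}(t)$; combining these gives $\psi_{\gamma,\theta}(t) = \varphi_{Z_{\rho,q}}(t)$ for Lebesgue-a.e.\ $t$.

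The technical core is to verify that $Z^{(u)}$ solves the SDE \eqref{eq:flow_SDE_gen}. While $Z^{(u)}(t) > 0$ the flow line $\eta^\theta_u$ is being absorbed into the right boundary of $\eta'_0([0,t])$, so its signed length evolves as $dY_\rho$; while $Z^{(u)}(t) < 0$ it is absorbed into the left boundary and evolves as $-dX_\rho$; this yields the diffusive part of the SDE. The local-time term $(2q-1)\,dL^{Z^{(u)}}$ arises at moments when $\eta^\theta_u$ detaches from one boundary of $\eta'_0$ and reattaches to the other, with coefficient governed by the asymmetry between the quantum lengths on the two sides of an angle-$\theta$ flow line near its tip. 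This asymmetry depends monotonically on $\theta$ and produces $q=q_\gamma(\theta)$ via the explicit IG boundary-length formulas. The main obstacle is precisely this local-time identification: it requires combining the quantitative flow-line merging and bouncing rules from imaginary geometry with sharp quantum-boundary-length estimates on the quantum sphere, and ruling out pathological accumulation of sign changes near the zero set of $Z^{(u)}$.
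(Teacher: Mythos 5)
This theorem is not proved in the present paper at all: it is imported verbatim from \cite[Theorem 1.17]{borga2021skewperm}, and the paper only ever \emph{uses} it (e.g.\ in Proposition~\ref{prop:permuton-q-area}). So there is no internal proof to compare against; what you have written is a sketch of how the proof goes in the cited reference. At the level of strategy your outline does match the known route: the mating-of-trees theorem of \cite{DMS14} supplies the correlated Brownian driver with $\rho=-\cos(\pi\gamma^2/4)$, angle-$\theta$ flow lines of $\wh h$ encode the order in which $\eta'_{\theta-\frac{\pi}{2}}$ visits the points $\eta'_0(u)$, and one then identifies suitable quantum-boundary-length processes with solutions of the coalescent-walk SDE~\eqref{eq:flow_SDE_gen}.

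However, as a proof the proposal has genuine gaps. First, the interface separating the points visited by $\eta'_{\theta-\frac{\pi}{2}}$ before $\eta'_0(u)$ from those visited after is the \emph{union} of the two flow lines $\eta^{z}_{\theta}\cup\eta^{z}_{\theta+\pi}$ started at $z=\eta'_0(u)$ (they are the two outer boundaries of $\eta'_{\theta-\frac{\pi}{2}}$ stopped upon hitting $z$, as recalled in Section~\ref{sect:sle_ig}), not the single angle-$\theta$ flow line; your definition of the signed process $Z^{(u)}$ and the claimed equivalence $\{Z^{(u)}(t)<0\}=\{\psi_{\gamma,\theta}(t)<u\}$ therefore need to be reformulated and justified. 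Second, and more seriously, the step you yourself flag as the ``main obstacle'' is exactly the content of the theorem: showing that the reflection/transmission behaviour at the zero set produces a symmetric local-time term with a well-defined coefficient $2q-1$. You assert this follows ``via the explicit IG boundary-length formulas,'' but no explicit formula for $q_\gamma(\theta)$ is known — the present paper states that an exact formula is only the subject of forthcoming work — so the argument cannot proceed by computation; the actual proof must establish abstractly that the process is a skew Brownian motion relative to the driver for \emph{some} $q$, using a characterization of skew Brownian motion together with the uniqueness theory for the SDE (\cite[Theorem 1.7]{borga2021skewperm}), and then define $q_\gamma(\theta)$ through that identification. Third, the mating-of-trees input is stated in \cite{DMS14} for weight-$(4-\gamma^2)$ quantum cones with a two-sided correlated Brownian motion; passing to the unit-area quantum sphere with a correlated Brownian excursion in the quadrant is a separate nontrivial result that must be invoked, not taken for granted. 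As it stands the proposal is a reasonable roadmap of the cited proof, but the decisive steps are asserted rather than proved.
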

	For every fixed $\gamma\in(0,2)$, the function \[q_\gamma(\theta):\left[-\frac{\pi}{2},\frac{\pi}{2}\right]\to[0,1]\] 
	is a decreasing homeomorphism and therefore has an inverse function $\theta_\gamma(q)$.
	Finally, for all $\theta\in[0,\pi/2]$ and all $\gamma\in(0,2)$, it holds that $q_\gamma(\theta) + q_\gamma(-\theta) = 1$. In particular, 	$q_\gamma(0)=1/2$ for all $\gamma\in(0,2)$. The Baxter permuton corresponds to $\gamma=4/3$ and $\theta=0$. 
	
	\subsubsection{Proof techniques for the main results}\label{sect:proof_tec}
	
	To prove Theorem~\ref{thm-baxter-density}, we first extend Theorem~\ref{thm:sbpfromlqg} to give a more explicit description of the skew Brownian permuton measure $\mu_{\rho,q}$ in terms of a unit-area quantum sphere and two space-filling SLEs, i.e.\ we prove the following.
	
	\begin{proposition}\label{prop:permuton-q-area}
		Fix $\gamma\in(0,2)$ and $\theta\in[-\frac\pi 2,\frac\pi 2]$. 
		Let $(\wh{{\mathbb C}}, h, 0, \infty)$ and $( \eta'_0, \eta'_{\theta-\frac{\pi}{2}})$ be the {unit-area} $\gamma$-Liouville quantum sphere and the pair of space-filling SLE$_{\kappa'}$ introduced above. Let also $\rho\in(-1,1)$ and $q\in[0,1]$ be such that $\rho=-\cos(\pi\gamma^2/4)$ and $q=q_\gamma(\theta)$, and consider the skew Brownian permuton $\mu_{\rho,q}$ constructed as in Theorem~\ref{thm:sbpfromlqg}.	
		Then, almost surely, for every $0\le x_1\le x_2\le 1$ and $0\le y_1\le y_2\le 1$, 
		\begin{equation*}
			\mu_{\rho,q}\Big([x_1, x_2]\times [y_1, y_2]\Big)=\mu_{h}\Big(  \eta'_0([x_1, x_2])\cap \eta'_{\theta-\frac{\pi}{2}}([y_1, y_2])\Big).
		\end{equation*}
	\end{proposition}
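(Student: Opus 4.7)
The plan is to show that the claimed identity follows essentially from unpacking the definition of $\mu_{\rho,q}$ in Theorem~\ref{thm:sbpfromlqg} and making a change of variables via the $\mu_h$-area parametrization of the two space-filling curves. By Theorem~\ref{thm:sbpfromlqg} we have $\mu_{\rho,q} = (\Id,\psi_{\gamma,\theta})_{*}\Leb$, so for a product set $[x_1,x_2]\times[y_1,y_2]$,
\begin{equation*}
\mu_{\rho,q}([x_1,x_2]\times[y_1,y_2]) = \Leb\bigl(\{t\in[x_1,x_2]:\psi_{\gamma,\theta}(t)\in[y_1,y_2]\}\bigr),
\end{equation*}
and the goal is to identify the right-hand side with $\mu_h\bigl(\eta'_0([x_1,x_2])\cap\eta'_{\theta-\pi/2}([y_1,y_2])\bigr)$.

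The key inputs I would use are two standard facts about space-filling SLE$_{\kappa'}$ coupled with $\mu_h$, which I would assemble at the start of the proof and cite from the literature reviewed in Section~\ref{sect:perm-LQG-cor}. First, the set $M_0$ (resp.\ $M_{\theta-\pi/2}$) of multiple points of $\eta'_0$ (resp.\ $\eta'_{\theta-\pi/2}$) almost surely has $\mu_h$-measure zero. Second, together with the area parametrization $\mu_h(\eta'_0([s,t]))=t-s$, this yields the pushforward identity $(\eta'_0)_{*}(\Leb|_{[0,1]})=\mu_h$ on $\wh{\mathbb C}$ (and analogously for $\eta'_{\theta-\pi/2}$); in particular the set $(\eta'_0)^{-1}(M_0\cup M_{\theta-\pi/2})$ has Lebesgue measure zero. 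All of this occurs on a single full-probability event $\Omega_0$ that does not depend on the four endpoints $x_1,x_2,y_1,y_2$.

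Working on $\Omega_0$, for any $t$ outside the Lebesgue-null set $(\eta'_0)^{-1}(M_0\cup M_{\theta-\pi/2})$ I have two equivalences: $t\in[x_1,x_2]$ iff $\eta'_0(t)\in\eta'_0([x_1,x_2])$ (otherwise $\eta'_0(t)$ would be a multiple point of $\eta'_0$), and $\psi_{\gamma,\theta}(t)\in[y_1,y_2]$ iff $\eta'_0(t)\in\eta'_{\theta-\pi/2}([y_1,y_2])$ (since $\psi_{\gamma,\theta}(t)$ is by definition the first, hence here the only, time at which $\eta'_{\theta-\pi/2}$ visits the simple point $\eta'_0(t)$). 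Combining these and applying the pushforward identity yields
\begin{equation*}
\Leb\bigl(\{t\in[x_1,x_2]:\psi_{\gamma,\theta}(t)\in[y_1,y_2]\}\bigr) = \mu_h\bigl(\eta'_0([x_1,x_2])\cap\eta'_{\theta-\pi/2}([y_1,y_2])\bigr),
\end{equation*}
simultaneously for all $x_1,x_2,y_1,y_2$ on $\Omega_0$. The main obstacle is not conceptual but rather bookkeeping: ensuring that a single exceptional null set handles all quadruples of endpoints at once, and quoting the correct SLE/LQG inputs about the vanishing of the $\mu_h$-measure of the multiple-point set so that the area parametrization really does produce the pushforward identity.
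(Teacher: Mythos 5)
Your proposal is correct, and it handles the quantifier ``almost surely, for every rectangle'' by a genuinely different device than the paper. The core identification is the same in both arguments: starting from Theorem~\ref{thm:sbpfromlqg}, one uses the fact that the multiple-point sets $M_0$ of $\eta'_0$ and $M_{\theta-\frac{\pi}{2}}$ of $\eta'_{\theta-\frac{\pi}{2}}$ carry no $\mu_h$-mass, together with the $\mu_h$-area parametrization, to identify $\Leb\bigl(\{t\in[x_1,x_2]:\psi_{\gamma,\theta}(t)\in[y_1,y_2]\}\bigr)$ with $\mu_h\bigl(\eta'_0([x_1,x_2])\cap\eta'_{\theta-\frac{\pi}{2}}([y_1,y_2])\bigr)$. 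The paper does this for each \emph{fixed} quadruple $(x_1,x_2,y_1,y_2)$, which yields a null event a priori depending on the rectangle, and then upgrades to all rectangles simultaneously by a Fubini argument combined with monotonicity of both sides in each endpoint and a.s.\ continuity of the right-hand side (non-atomicity of the LQG area measure). You instead arrange a single exceptional set of times that does not depend on the rectangle: on one full-probability event you have the pushforward identity $(\eta'_0)_*\bigl(\Leb|_{[0,1]}\bigr)=\mu_h$ and the $\Leb$-nullity of $(\eta'_0)^{-1}(M_0\cup M_{\theta-\frac{\pi}{2}})$, and the two pointwise equivalences off this set give the identity for all rectangles at once, with no continuity or monotonicity step. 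Both routes work; yours is more direct on the ``for all rectangles'' part, at the cost of actually having to justify the pushforward identity, which the paper never formulates. That justification deserves a line in a final write-up: the parametrization $\mu_h(\eta'_0([s,t]))=t-s$ only concerns sets of the form $\eta'_0([s,t])$, and passing to arbitrary Borel sets uses injectivity of $\eta'_0$ off $M_0$ (equivalently, that the a.e.-defined inverse time map pushes $\mu_h$ forward to $\Leb|_{[0,1]}$), the fact that $\mu_h(M_0)=0$, and a $\pi$-system argument; this is exactly where the multiple-point input enters, and without it the identity would be false, so it should not be presented as an immediate consequence of the parametrization alone.
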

	Using Proposition~\ref{prop:permuton-q-area}, we express the intensity measure $\bbE[\mu_{\rho,q}]$ in terms of a quantum sphere decorated by certain flow lines of a Gaussian free field, which are simple SLE$_\kappa$ curves with $\kappa=\gamma^2\in(0,4)$. This is proved in Proposition~\ref{prop-baxter-disk} using the rerooting invariance of marked points for quantum spheres (\cite{DMS14}; see also Proposition~\ref{prop-qs-resampling} below) and the fact that the outer boundaries of the SLE$_{16/\kappa}$-type curves $\eta'_0$ and $\eta_{\theta-\frac\pi2}'$ are simple SLE$_\kappa$ curves. Using conformal welding results and scaling properties of quantum disks and spheres (\cite{AHS20}; see also Section~\ref{sec:conf-welding}), this leads to a simpler expression for  $\bbE[\mu_{\rho,q}]$ via the density function $p_W(a,\ell_1,\ell_2)$ of the area $a$ of quantum disks with given quantum boundary lengths $\ell_1$ and $\ell_2$ (see Theorem~\ref{thm-baxter-density-general}). When $(\rho,q)=(-1/2,1/2)$, the density $p_W(a,\ell_1,\ell_2)$ is the same as the density of the duration of a Brownian excursion in a cone of angle $\frac{\pi}{3}$ (as argued in \cite[Section 7]{AHS20}; see also Proposition~\ref{prop-disk-excursion}). The latter can be computed using standard heat equation argument as done in Section~\ref{sec:density-a} via \cite{Ty85}. 
	We finally briefly explain the relation between the integrals in \eqref{eqn-baxter-density} and spherical tetrahedra (see Section~\ref{sect:rel_tetra}).   
	
	To prove Theorem~\ref{thm:positivity}, we begin by observing that the occurrence of a fixed (standard) pattern in $\mu_{\rho,q}$ can be reformulated in terms of a specific condition on the crossing and merging order of some collection of flow lines of a Gaussian free field (see Lemma~\ref{prop-pos-dens-key0} and Figure~\ref{fig-flowlines-perm} for a precise statement). Then building on the key result from \cite[Lemma 3.8]{MS17}, which roughly speaking states that a simple SLE$_\kappa$ curve can approximate any continuous simple curves with positive probability, we prove that this crossing and merging condition holds with positive probability (the main difficulty here is that we need to look at several flow lines of different angles together). Finally, by the scaling invariance of the whole-plane GFF and a tail triviality argument, we conclude the proof of Theorem~\ref{thm:positivity}. 
	
	We conclude the introduction with three observations. First, qualitatively, our method works equally well for $\mu_{\rho,q}$ with any $\rho\neq 1$. 
	But quantitatively,  the Baxter permuton corresponds to a special case where the function
	$p_W(a,\ell_1,\ell_2)$ is significantly simpler than the general case; see Remark~\ref{rem:generalization}. This is why Theorem~\ref{thm-baxter-density} is restricted to the Baxter case while Theorem~\ref{thm:positivity} is for the general case. 
	
	Second, the  angle $\theta$ in Theorem~\ref{thm:sbpfromlqg} has a simple permutation interpretation.
	\begin{proposition}\label{prop-expected-occ-21}
		For all $(\rho,q)\in(-1,1)\times[0,1]$, let $\theta\in[-\frac{\pi}{2}, \frac{\pi}{2}]$ be related to $q$ by the relation $q = q_\gamma(\theta)$ given in Theorem~\ref{thm:sbpfromlqg} with $\gamma\in(0,2)$ such that $\rho=-\cos(\pi\gamma^2/4)$. Then
		\begin{equation}\label{eq:inversion}
			\bbE[(\pocc(21, \mu_{\rho,q}))] = \frac{\pi-2\theta}{2\pi}.
		\end{equation}
	\end{proposition}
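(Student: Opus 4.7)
The plan is to exploit the LQG/SLE representation of Proposition~\ref{prop:permuton-q-area} together with the rotational symmetry of the whole-plane GFF. By that proposition, sampling $(X_1, Y_1), (X_2, Y_2)$ i.i.d.\ from $\mu_{\rho, q}$ is equivalent to sampling $z_1, z_2$ i.i.d.\ from $\mu_h$ and setting $(X_i, Y_i) = ((\eta'_0)^{-1}(z_i), (\eta'_{\theta - \pi/2})^{-1}(z_i))$. Writing $w \prec_\alpha w'$ to mean that $\eta'_\alpha$ visits $w$ before $w'$, the pattern-$21$ event is exactly the event that the two orderings of $\{z_1, z_2\}$ disagree; the symmetry $z_1 \leftrightarrow z_2$ then gives
\[
\bbE[\pocc(21, \mu_{\rho, q})] = 2\, \bbP\big(z_1 \prec_0 z_2,\ z_1 \succ_{\theta-\pi/2} z_2\big).
\]

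The central claim is that, conditionally on $(h, z_1, z_2)$, the random set $A := \{\alpha \in \mathbb{R}/2\pi\mathbb{Z} : z_1 \prec_\alpha z_2\}$ is almost surely an arc of length $\pi$, of the form $A = [\alpha_*, \alpha_* + \pi]$. Two ingredients yield this. First, the Miller--Sheffield reversibility for whole-plane space-filling SLE$_{\kappa'}$ gives that $\eta'_{\alpha + \pi}$ is (up to reparametrization) the time-reversal of $\eta'_\alpha$; hence $A + \pi = A^c$, forcing $|A| = \pi$. Second, the ordering of $z_1, z_2$ by $\eta'_\alpha$ is determined by the side on which the two angle-$(\alpha+\pi/2)$ flow lines emanating from $z_1$ and $z_2$ merge (these are the left boundaries of the past region of $\eta'_\alpha$ at the respective points; they share the same angle and a.s.\ merge by imaginary geometry). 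As $\alpha$ rotates continuously, this merging side can change only at a discrete set of critical angles, and combined with $A + \pi = A^c$, a topological/winding argument forces exactly two flips, so $A$ is a single arc.

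Next, we argue that $\alpha_*$ is uniformly distributed on $\mathbb{R}/2\pi\mathbb{Z}$. Fix the additive constant of $\hat h$ via a convention preserved by rotations around the origin (e.g., zero mean on the unit disk). The joint law of $(h, z_1, z_2, \hat h)$ is then invariant under the rotations $R_\phi : z \mapsto e^{i\phi} z$: the quantum sphere $(\wh{\mathbb C}, h, 0, \infty)$ is rotation-invariant since $R_\phi$ fixes its marked points $0, \infty$, and $\hat h$ is rotation-invariant under the chosen convention. Tracing through the imaginary geometry transformation rule for the angle-$\alpha$ flow lines under $R_\phi$, one checks that the set $A$ shifts by $\phi$, and in particular so does $\alpha_*$. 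Combined with the distributional invariance, this forces the law of $\alpha_*$ to be shift-invariant, hence uniform.

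Finally, with $\alpha_*$ uniform on $\mathbb{R}/2\pi\mathbb{Z}$ and $\beta := \theta - \pi/2 \in [-\pi, 0]$ for $\theta \in [-\pi/2, \pi/2]$, an elementary computation on the circle yields
\[
\bbP(0 \in A,\ \beta \notin A) = \frac{|\beta|}{2\pi} = \frac{\pi/2 - \theta}{2\pi},
\]
and doubling gives the claimed $(\pi - 2\theta)/(2\pi)$. The boundary cases $\theta = \pm \pi/2$ (corresponding to $q \in \{0, 1\}$) follow by a continuity/limiting argument. The main obstacle is the arc (connectedness) property of $A$: while $|A| = \pi$ is immediate from reversibility, ruling out that $A$ is a disjoint union of several arcs of total length $\pi$ requires careful use of the imaginary geometry interaction rules for GFF flow lines.
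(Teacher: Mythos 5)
Your reduction is correct: by Proposition~\ref{prop:permuton-q-area} (or Lemma~\ref{lm-permuton-sample}) one indeed has $\bbE[\pocc(21,\mu_{\rho,q})]=2\,\bbP\big(z_1\prec_0 z_2,\ z_1\succ_{\theta-\pi/2}z_2\big)$ for $z_1,z_2$ sampled from $\mu_h$, and the anti-periodicity $A+\pi=A^c$ (hence $|A|=\pi$) does follow, since the angle-$(\alpha+\pi)$ space-filling curve induces the reverse of the angle-$\alpha$ order. The genuine gap is the step you yourself flag: the claim that $A$ is almost surely a \emph{single} arc. This is the heart of your argument and it is not proved; moreover, the justification you sketch cannot work as stated. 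Anti-periodicity forces the number of order flips around the circle to be twice an \emph{odd} number, so ``flips occur at a discrete set of angles'' together with $A+\pi=A^c$ is compatible with $6,10,\dots$ flips: for instance $A=[0,\pi/4)\cup[\pi/2,3\pi/4)\cup[5\pi/4,3\pi/2)\cup[7\pi/4,2\pi)$ satisfies $A+\pi=A^c$ and $|A|=\pi$ but is a union of three arcs. Hence no purely topological/winding argument can force exactly two flips; one needs genuine imaginary-geometry input (essentially the monotonicity/non-crossing of flow lines of different angles started from the same point, showing that the wedge bounded by $\eta^{z_1}_{\alpha+\pi/2}$ and $\eta^{z_1}_{\alpha+3\pi/2}$ sweeps past $z_2$ exactly once per full rotation). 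The gap also cannot be bypassed: rotational invariance of the law of $A$ plus $|A|=\pi$ only give $\bbP(0\in A)=1/2$; the quantity you need, $\bbP(0\in A,\ \theta-\pi/2\notin A)$, depends on the two-point structure of the random set, so without connectedness (or an equivalent monotonicity statement) the final computation is unjustified.

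For comparison, the paper's proof avoids any pathwise statement over all angles simultaneously: using rerooting invariance of the quantum sphere it writes $\bbE[\pocc(21,\mu_{\rho,q})]=2\,\bbE[A_1]$, where $A_1$ is the quantum area of the wedge between the angle-$\theta$ and angle-$\pi/2$ flow lines from the origin; it computes $\bbE[A_1]$ exactly for $\theta=(\frac{m}{n}-\frac12)\pi$ by cutting the sphere with $2n$ equally spaced flow lines into $2n$ wedges of equal expected area $\frac{1}{2n}$ (via the conformal welding symmetry of Theorem~\ref{thm-sph-welding}), and then interpolates to all $\theta$ by flow-line monotonicity. If you establish your arc lemma --- most plausibly by that same monotonicity --- your route would yield a stronger pathwise ``critical angle'' description and the same formula; as written, however, the proof is incomplete precisely at that point.
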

	The third and fourth named authors of this paper are working on deriving an exact formula for $q_\gamma(\theta)$ with Ang. For the Baxter permuton  $\bbE[(\pocc(21, \mu_{\rho,q}))]=\frac{1}{2}$ by symmetry, which is consistent with  $\theta=0$. In fact,  we can express $\bbE[(\pocc(\pi, \mu_{\rho,q}))] $ for any pattern $\pi$ in terms of SLE and LQG, but for $\pi\neq 21$  we do not find any exact information  as enlightening as~\eqref{eq:inversion}; see the end of Section~\ref{sec:density}. Note that Proposition~\ref{prop-expected-occ-21} answers \cite[Conjecture 1.22]{borga2021skewperm}, which conjectures that the expected proportion of inversions in $\mu_{\rho,q}$ is an increasing function of $q$ (recall that $\theta$ is a decreasing function of $q$). 
	
	Finally, our work falls into the line of work proving integrability results (i.e., exact formulas) via SLE and/or LQG techniques. Integrability results for SLE and LQG can be proven via a variety of methods, see e.g.\ \cite{schramm-left-passage,lawler-book,dub-watts,garban-tf,ssw-radii,schramm-zhou,sw-watts,hongler-smirnov-number-of-clusters,beliaev-izyurov, alberts-kozdron-lawler-radial-green,beliaev-viklund,lenells-viklund,als-cle-radius} for SLE results and \cite{krv-dozz,remy-fb-formula} for LQG results. As a particular class of methods, couplings between SLE and LQG \cite{shef-zipper,DMS14} allows to exploit the interplay between SLE and LQG to prove new results about both objects. For example, the KPZ formula \cite{kpz-scaling} has been used to predict exponents of statistical mechanics models and dimensions of SLE curves via combinatorics on planar maps; see e.g.\ \cite{ghm-kpz} for rigorous computations in this spirit. Chen, Curien and Maillard \cite{chen-curien-maillard} give a heuristic derivation of the conformal radius formula in \cite{ssw-radii} by using the coupling with LQG; see also \cite{hl-cle-on-lqg} for a proof via LQG techniques. A number of integrability results based on the SLE and LQG coupling have been established by subsets of the coauthors of this paper~\cite{AHS20,AHS21,ARS21,AS21}, and our Theorem~\ref{thm-baxter-density} can be viewed as a part of this ongoing endeavor.
	
	\bigskip
	
	\noindent\textbf{Acknowledgments.} 
	N.H.\ was supported by grant 175505 of the Swiss National Science Foundation.
	X.S.\ was supported by the  NSF grant DMS-2027986 and the Career grant 2046514.
	P.Y.\ was supported by the NSF grant DMS-1712862. We thank two anonymous referees for all their precious and useful comments.

	\section{Permuton-LQG/SLE correspondence}\label{sect:perm-LQG-cor}
	
	\label{sect:perm-lqg-corr}
	
	This section collects the background needed for later sections. We review in Section~\ref{sect:q_surf_and_SLE} some definitions related to the Gaussian free field, quantum surfaces and SLE curves. Then, in Section~\ref{sec:lqg-baxter} we prove Proposition~\ref{prop:permuton-q-area}. 
	
	\medskip
	
	\noindent\textbf{Notation.} In this paper we {will often} work with non-probability measures. We extend the terminology of ordinary probability to this setting: For a (possibly infinite but $\sigma$-finite) measure space $(\Omega, \mathcal{F}, M)$, we say $X$ is a \emph{random variable} if $X$ is an $\mathcal{F}$-measurable function with its \textit{law} defined via the push-forward measure $M_X=X_*M$. In this case, we say $X$ is \textit{sampled} from $M_X$ and write $M_X[f]$ for $\int f(x)M_X(dx)$. By \textit{conditioning} on some event $E\in\mathcal{F}$ with $0<M[E]<\infty$, we are referring to the probability measure $\frac{M[E\cap \cdot]}{M[E]} $  over the space $(E, \mathcal{F}_E)$ with $\mathcal{F}_E = \{A\cap E: A\in\mathcal{F}\}$. Finally, for a finite measure $\mu$ we let $|\mu|$ be its total mass and $\mu^\#: = \frac{\mu}{|\mu|}$ be its normalized version.
	
	\subsection{Gaussian free fields, quantum surfaces and SLE curves}\label{sect:q_surf_and_SLE}
	
	We assume throughout the rest of the paper that $\gamma\in(0,2)$, unless otherwise stated. We introduce the following additional parameters defined in terms of $\gamma$
	\begin{equation*}\label{eq:paramLQG}
		\qquad Q=2/\gamma+\gamma/2, \qquad \chi=2/\gamma-\gamma/2, \qquad \kappa=\gamma^2, \qquad {\kappa'}=16/\gamma^2.
	\end{equation*} 
	
	\subsubsection{Gaussian free fields}\label{sect:gff}
	
	Recall that the \emph{Gaussian free field (GFF) with free boundary conditions} (resp.\ \emph{zero boundary conditions}) $h$ on a planar domain $D\subsetneq\mathbb C$ is defined by taking an orthonormal basis $\{f_n\}$ of $H(D)$ (resp.\ $H_0(D)$), the Hilbert space completion of the
	set of smooth functions on $D$ with finite Dirichlet energy (resp.\ {finite Dirichlet energy and} compact support) with respect to the Dirichlet inner product, an i.i.d.\ sequence $\{\alpha_n\}$ of standard normal random variables, and considering the sum $h = \sum_{n=1}^\infty \alpha_nf_n$. This series converges in an appropriate Sobolev space and hence in the space of distributions; see for instance \cite[Theorem 1.24]{berestycki-powell-notes}. In the case of the free boundary GFF, we view $h$ as a distribution modulo a global additive constant (see also \cite[Definition 5.2]{berestycki-powell-notes}). 
	
	The \emph{whole-plane Gaussian free field} $h$, viewed as a distribution on $\mathbb{C}$ modulo a global additive constant, is defined in a similar manner as the free boundary GFF, but with $D=\mathbb{C}$ (see for instance \cite[Section 5.4]{berestycki-powell-notes}). Sometimes we will fix this additive constant or view the whole-plane GFF as a distribution modulo a global additive integer multiple of some other fixed constant; if this is done, it will be always specified in the paper. We refer to \cite{She07, pw-gff-notes, berestycki-powell-notes} for more background on the GFF.
	
	\subsubsection{Quantum surfaces}\label{sect:quan_surf}

	Consider the space of pairs $(D,h)$, where $D\subseteq \mathbb C$ is a planar domain and $h$ is a distribution on $D$ (often some variant of the GFF). Define the equivalence relation $\sim_\gamma$, where $(D, h)\sim_\gamma(\wt{D}, \wt{h})$ if there is a conformal map $\varphi:\wt{D}\to D$ such that 
	\begin{equation}\label{eqn-lqg-changecoord}
		\wt{h} = h\circ \varphi+Q\log |\varphi'|.
	\end{equation}
	A \textit{quantum surface} $S$ is an equivalence class of pairs $(D,h)$ under the relation $\sim_\gamma$, and we say $(D,h)$ is an \emph{embedding} of $S$ if $S = (D,h)/\mathord\sim_\gamma$.
	In this paper, the domain $D$ shall be either the upper half plane $\mathbb{H}:=\{z\in\mathbb{C}:\mathrm{Im}\ z>0\}$, the Riemann sphere $\wh{\mathbb{C}}:=\mathbb{C}\cup\{\infty\}$, or a planar domain cut out by $\mathrm{SLE}_\kappa$ curves. We will often abuse notation and identify the pair $(D,h)$ with its equivalence class, e.g.\ we may refer to $(D,h)$ as a quantum surface (rather than a representative of a quantum surface).
	
	A \emph{quantum surface with} $k$ \emph{marked points} is an equivalence class of elements of the form
	$(D, h, x_1,\dots,x_k)$, where $(D,h)$ is a quantum surface, the points  $x_i\in {\overline{D}}$, and with the further
	requirement that marked points (and their ordering) are preserved by the conformal map $\varphi$ in \eqref{eqn-lqg-changecoord}.
	
	A \emph{curve-decorated quantum surface} is an equivalence class of tuples $(D, h, \eta_1, ..., \eta_k)$,
	where $(D,h)$ is a quantum surface, $\eta_1, ..., \eta_k$ are curves in $\overline D$, and with the further
	requirement that $\eta$ is preserved by the conformal map $\varphi$ in \eqref{eqn-lqg-changecoord}. Similarly, we can
	define a curve-decorated quantum surface with $k$ marked points. Throughout this paper, the curves $\eta_1, ..., \eta_k$ are $\mathrm{SLE}_\kappa$ type curves (which have conformal invariance properties) sampled independently of the surface $(D, h)$.  
	
	Given some variant $h$ of the GFF, one can make sense of the bulk measure $\mu_h$, where $\mu_h(A) = \int_A e^{\gamma h(z)}\,dz$, by considering the circle average $h_\e(z)$ of $h$ on the circle $\partial B_\e(z)$ and taking the weak limit of $\e^{\gamma^2/2}e^{\gamma h_\e(z)}dz$ as $\e\to0$; see for instance \cite{kahane,rhodes-vargas-log-kpz,DS11}. The measure $\mu_h$ is then called the \emph{$\gamma$-Liouville quantum gravity area measure}. Similarly, one can define a length measure $\nu_h$, called the \emph{$\gamma$-Liouville quantum gravity length measure}, on certain curves in the closure $\overline{D}$ of $D$. Notice that $\mu_h$ and $\nu_h$ depend on the parameter $\gamma$, but we skip $\gamma$ from the notation since the value of $\gamma$ will always be implicitly understood from the context. The two measures satisfy natural scaling properties, namely $\mu_{h+c}(A) = e^{\gamma c}\mu(A)$ and  $\nu_{h+c}(S) = e^{\frac{\gamma}{2} c}\nu(S)$ for {$c\in\R$ an arbitrary constant}. See \cite{DS11} for more details on these bulk and boundary Liouville quantum gravity measures. A self-contained  introduction to GFF, Liouville quantum gravity measures, and quantum surfaces, can be also found in \cite[Section 3.2-3]{GHS19} or in the lecture notes \cite{berestycki-powell-notes}.
	
	Now we formally introduce \emph{quantum spheres} and \emph{quantum disks}, which are the main types of quantum surfaces considered in this paper and are defined in terms of some natural variants of the GFF. The reader can find intuitive explanations of the next definitions at the end of this section. We also highlight that it is not strictly necessary to understand the technical details involved in the following definitions in order to then follow our proofs in the consecutive sections.
	
	As argued in \cite[Section 4.1]{DMS14}, when $D =\wh{\mathbb{C}}$ (resp.\ $D =\mathbb{H}$), we have the decomposition $H(\wh{\mathbb{C}}) = H_1(\wh{\mathbb{C}})\oplus H_2(\wh{\mathbb{C}})$ (resp.\  $H(\mathbb{H}) = H_1(\mathbb{H})\oplus H_2(\mathbb{H})$), where $H_1(\wh{\mathbb{C}})$ (resp.\ $H_1(\mathbb{H})$) is the subspace of radially symmetric functions, and $H_2(\wh{\mathbb{C}})$  (resp.\ $H_2(\mathbb{H})$) is the subspace of functions having mean 0 on all circles $\{|z|=r\}$ (resp.\ semicircles $\{|z|=r,\ \text{Im}\ z>0\}$). For a whole-plane GFF $h$, we can decompose $h=h_1+h_2$, where $h_1$ and $h_2$ are independent distributions given by the projection of $h$ onto $H_1(\wh{\mathbb{C}})$ and $ H_2(\wh{\mathbb{C}})$, respectively. We remark that $h_1$ is defined modulo an additive constant while $h_2$ is not.
	The same result applies for the upper half plane $\mathbb{H}$. 
	
	Since a quantum surface is an equivalence class of pairs $(D,\psi)$ (or, more generally, an equivalence class of tuples $(D,\psi,z_1,\dots,z_k)$ with $z_1,\dots,z_k\in \ol D$), in order to describe the law of a quantum sphere, we will start by describing the law of its random field $\psi$.
	
	\begin{definition}[{Quantum sphere}]\label{def-quantum-sphere}
		Fix $\gamma\in(0,2)$ and let $(B_s)_{s\ge0}$ and $(\wt{B}_s)_{s\ge0}$ be independent standard one-dimensional Brownian motions. Fix a weight parameter $W>0$ and set $\alpha := Q - \frac{W}{2\gamma}$.  Let $\mathbf{c}$ be sampled from the infinite measure $\frac{\gamma}{2}e^{2(\alpha-Q)c}dc$ on $\R$ independently from $(B_s)_{s\ge0}$ and $(\wt{B}_s)_{s\ge0}$.
		Let	
		\begin{equation*}
			X_t=\left\{ \begin{array}{rcl} 
				B_t+\alpha t+\mathbf{c} & \mbox{for} & t\ge 0,\\
				\wt{B}_{-t} +(2Q-\alpha) t+\mathbf{c} & \mbox{for} & t<0,
			\end{array} 
			\right.
		\end{equation*}
		conditioned on  $B_t-(Q-\alpha)t<0$ and  $ \wt{B}_t - (Q-\alpha)t<0$ for all $t>0$. 	Let $h$ be a whole-plane GFF on $\wh{\mathbb{C}}$ independent of $(X_t)_{t\in\bbR}$ with projection onto $ H_2(\wh{\mathbb{C}})$ given by $h_2$. We consider the random distribution 
		\begin{equation*}
			\psi(\cdot)=X_{-\log|\cdot|} + h_2(\cdot)\, .
		\end{equation*}
		Let $\mathcal{M}_2^{\mathrm{sph}}(W)$ be the infinite measure describing the law of $(\wh{\mathbb{C}}, \psi,0,\infty)/\mathord\sim_\gamma$. 
		We call a sample from $\mathcal{M}_2^{\textup{sph}}(W)$ a \emph{quantum sphere} of weight $W$ with two marked points.
		
		A \emph{unit-area $\gamma$-quantum sphere} with two marked points is the quantum sphere of weight $4-\gamma^2$ with two marked points conditioned on having total $\gamma$-LQG area measure $\mu_{\psi}(\wh{\mathbb{C}})$ equal to one.
	\end{definition}
	
	It is explained in \cite[Sections 4.2 and 4.5]{DMS14} that the considered conditioning on $B$ and $\wt B$, along with the conditioning on the quantum area of a weight $4-\gamma^2$ quantum sphere, can be made rigorous via a limiting procedure, although we are conditioning on probability zero events.
	
	We remark that the weight $4-\gamma^2$ here is ``typical'' because in this case the two marked points (which currently  {correspond} to 0 and $\infty$) can be realized as independent samples from the {$\gamma$-}LQG area measure $\mu_\psi$ (see Proposition~\ref{prop-qs-resampling} below for a precise statement). This important rerooting invariance property shall later be used in Section~\ref{sec:density-a} in order to compute the density of the Baxter permuton via quantum surfaces. 
	
	We now turn to the definition of \emph{quantum disks}, which is splitted in two different cases: \emph{thick quantum disks} and \emph{thin quantum disks}.
	
	\begin{definition}[Thick quantum disk]\label{def-quantum-disk}
		Fix $\gamma\in(0,2)$ and let $(B_s)_{s\ge0}$ and $(\wt{B}_s)_{s\ge0}$ be independent standard one-dimensional Brownian motions.  Fix a weight parameter $W\ge\frac{\gamma^2}{2}$ and let $\beta = \gamma+ \frac{2-W}{\gamma}\le Q$. Let $\mathbf{c}$ be sampled from the infinite measure $\frac{\gamma}{2}e^{(\beta-Q)c}dc$ on $\R$ independently from $(B_s)_{s\ge0}$ and $(\wt{B}_s)_{s\ge0}$.
		Let 	
		\begin{equation*}
			Y_t=\left\{ \begin{array}{rcl} 
				B_{2t}+\beta t+\mathbf{c} & \mbox{for} & t\ge 0,\\
				\wt{B}_{-2t} +(2Q-\beta) t+\mathbf{c} & \mbox{for} & t<0,
			\end{array} 
			\right.
		\end{equation*}
		conditioned on  $B_{2t}-(Q-\beta)t<0$ and  $ \wt{B}_{2t} - (Q-\beta)t<0$ for all $t>0$. Let $h$ be a free boundary  GFF on $\mathbb{H}$ independent of $(Y_t)_{t\in\bbR}$ with projection onto $H_2(\mathbb{H})$ given by $h_2$. Consider the random distribution
		\begin{equation*}
			\psi(\cdot)=X_{-\log|\cdot|} + h_2(\cdot) \, .
		\end{equation*}
		Let $\mathcal{M}_2^{\mathrm{disk}}(W)$ be the infinite measure describing the law of $({\mathbb{H}}, \psi,0,\infty)/\mathord\sim_\gamma $. 
		We call a sample from $\mathcal{M}_2^{\textup{disk}}(W)$ a \emph{quantum disk} of weight $W$ with two marked points.
		
		We call $\nu_\psi((-\infty,0))$ and $\nu_\psi((0,\infty))$ the left and right boundary quantum length of the quantum disk  $(\mathbb{H}, \psi, 0, \infty)$.
	\end{definition}
	
	When $0<W<\frac{\gamma^2}{2}$, we define the \emph{thin quantum disk} as the concatenation of weight $\gamma^2-W$ thick disks with two marked points as in \cite[Section 2]{AHS20}.

	\begin{definition}[Thin quantum disk]\label{def-thin-disk}
		Fix $\gamma\in(0,2)$. For $W\in(0, \frac{\gamma^2}{2})$, the infinite measure $\mathcal{M}_2^{\textup{disk}}(W)$ is defined as follows. First sample a random variable $T$ from the infinite measure $(1-\frac{2}{\gamma^2}W)^{-2}\textup{Leb}_{\mathbb{R}_+}$; then sample a Poisson point process $\{(u, \mathcal{D}_u)\}$ from the intensity measure $\mathds{1}_{t\in [0,T]}dt\times \mathcal{M}_2^{\textup{disk}}(\gamma^2-W)$; and finally consider the ordered (according to the order induced by $u$) collection of doubly-marked thick quantum disks $\{\mathcal{D}_u\}$, called a \emph{thin quantum disk} of weight $W$.
		
		Let $\mathcal{M}_2^{\textup{disk}}(W)$ be the infinite measure describing the law of this ordered collection of doubly-marked quantum disks $\{\mathcal{D}_u\}$.
		The left and right boundary length of a sample from $\mathcal{M}_2^{\textup{disk}}(W)$ is set to be equal to the sum of the left and right boundary lengths of the quantum disks $\{\mathcal{D}_u\}$.
	\end{definition}
	
	We give a heuristic interpretation of the last definition. Note that one can interpret the ordered collection of doubly-marked quantum disks $\{\mathcal{D}_u\}$ as if we are concatenating the surfaces $\{\mathcal{D}_u\}$ by ``gluing'' them at their marked points, as shown in Figure~\ref{fig:thindisk}. These collections of doubly-marked quantum surfaces are sometime called \emph{beaded surfaces}.

	\begin{figure}[ht!]
		\begin{center}
			\includegraphics[width=.6\textwidth]{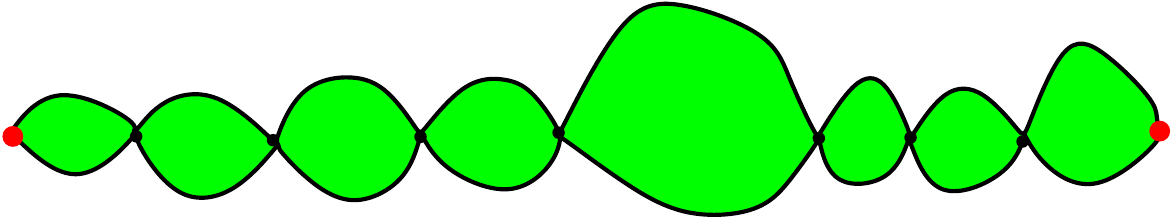}  
			\caption{Schematic representation of a sample of a thin quantum disk of weight $W\in(0,\frac{\gamma^2}{2})$ as the concatenation of weight $\gamma^2-W$ thick quantum disks. The green bubbles correspond to the thick quantum disks $\{\mathcal{D}_u\}$ involved in the construction. Note that there are in fact (countably) infinite many thick quantum disks which are not drawn near the two endpoints (shown in red) and between each pair of macroscopic disks.}\label{fig:thindisk}
		\end{center}
	\end{figure}
	
	\begin{remark}
		The quantum spheres and disks introduced in this section can also be equivalently constructed via methods in Liouville conformal field theory (LCFT); see e.g.~\cite{DKRV16, HRV-disk} for these constructions and see \cite{ahs-sphere,cercle-quantum-disk,AHS20} for proofs of equivalence with the surfaces defined above. Fundamental properties of the surfaces such as structure constants and correlation functions have also been established via methods in LCFT \cite{krv-dozz,gkrv-bootstrap,gkrv-genus}, confirming predictions from the physics literature \cite{bpz-conformal-symmetry,zz-dozz,do-dozz}. The quantum spheres and disks also arise as the scaling limit of certain random planar maps. For example, when $\gamma=\sqrt{8/3}$, $\mathcal{M}_2^{\textup{disk}}(2)$ is the law of the LQG realization of the Brownian disk with two marked boundary points with free area and free boundary length~\cite{lqg-tbm-1,lqg-tbm-2}, where we recall that the Brownian disk is the scaling limit of triangulations or quadrangulations with disk topology sampled from the critical Boltzmann measure~\cite{BM15browniandisk,GM19browniandisk}.
	\end{remark}

	We conclude this section by briefly explaining some intuitions behind the definitions of quantum spheres and quantum disks. We remark that these explanations are not  
	needed to follow the rest of the paper. 
	
	Following~\cite[Section 1.2]{DMS14}, we explain why the weight parameter $W$ encodes in some sense how ``thick/thin'' the surface is.
	In Definition~\ref{def-quantum-sphere} (resp.\ Definition~\ref{def-quantum-disk}), the process $X_t$ (resp.\ $Y_t$) encodes the average of the field $\psi$ on $\wh{\mathbb{C}}$ (resp.\ on $\mathbb H$) over the circle (resp.\ semicircle) of radius $e^{-t}$ centered at 0, and can be defined by taking the logarithm of Bessel excursions of dimensions $2+\frac{2}{\gamma^2}W$ and $1+\frac{2}{\gamma^2}{W}$, respectively; see~\cite[Section 4]{DMS14}.  Note that the dimension of the Bessel process increases as $W$ increases. 
	
	Since the processes $X_t$ and $Y_t$ (and so also the corresponding Bessel excursions) are sampled from infinite measures, the measures for quantum spheres and quantum disks are infinite. Moreover, the random constant $\mathbf{c}$ appearing in the two definitions encodes the largest value of the processes $X_t-Qt$ and $Y_t-Qt$, which is attained by definition at $t=0$ (equivalently, the random constant $e^\mathbf{c}$ encodes the largest value attained during the corresponding Bessel excursions). The process $X_t-Qt$ (resp.\ $Y_t-Qt$) encodes some other field average process when the quantum sphere (resp.\ the quantum disk) is embedded onto the cylinder $\mathbb{R}\times[0,i2\pi]/\mathord\sim$ instead of $\wh{\mathbb{C}}$, where $\sim$ stands for the equivalence relation $x\sim y$ if $x=y+2\pi i$ (resp.\ onto the strip $\mathbb R\times (0,i2\pi)$ instead of $\mathbb H$). The term $Qt$ in these  processes comes from the change of coordinates formula in \eqref{eqn-lqg-changecoord}. As a consequence, the random constant $\mathbf{c}$ reflects the largest value of these field average processes under these other embeddings. Note that $\mathbf{c}$ ``tends'' to be larger when $W$ increases.

	The two marked points on each quantum surface are related to the starting and the ending points of the corresponding Bessel excursion, and near these marked points the field $\psi$ looks like $h-\beta\log|\cdot|$ (if the surface is embedded in $\bbH$ with the relevant marked point at 0), where for quantum spheres $\beta = Q-\frac{W}{2\gamma}$ and $h$ is a whole-plane GFF, while for quantum disks $\beta = \gamma+\frac{2-W}{\gamma}$ and $h$ is a free boundary  GFF on $\mathbb{H}$. That is, near these marked points, the field $\psi$ looks like a GFF plus a $\beta$-log-singularity, and such singularity is smaller when $W$ increases, decreasing the amount of mass in the neighborhood of the two marked points. In fact, for readers familiar with the Liouville CFT approach, as proved in~\cite{AHS21}, a weight $W$ quantum sphere with two marked points can be understood as the uniform embedding of the Liouville field $\mathrm{LF}_{\mathbb{C}}^{(\beta,0),(\beta,\infty)}$ with insertion points $(\beta,0),(\beta,\infty)$, while a weight $W$ quantum disk with two marked points can be realized as the uniform embedding of the Liouville field $\mathrm{LF}_{\mathbb{H}}^{(\beta,0),(\beta,\infty)}$ with insertion points $(\beta,0),(\beta,\infty)$. Moreover, as shown in~\cite{DMS14, AHS20, ASY22}, the weight $W$ is additive under the operation of \emph{conformal welding}, which shall be further discussed in Section~\ref{sec:conf-welding}. 
	
	\subsubsection{SLE curves and imaginary geometry}\label{sect:sle_ig}
	Now we briefly recall the construction of  the Schramm--Loewner evolution ($\mathrm{SLE}_\kappa$) curves with parameter $\kappa>0$, which were introduced by Schramm~\cite{Sch00} and arise as scaling limits of many statistical physics models, see e.g.~\cite{SmirnovICM,Law08,Sch06ICM}. Roughly speaking, on the upper half plane $\mathbb{H}$, the $\mathrm{SLE}_\kappa$ curve $\eta$ can be described via the Loewner equation 
	\begin{equation*}
		\frac{dg_t}{dt}(z) = \frac{2}{g_t(z)-W_t}; \qquad g_0(z) = z;
	\end{equation*}
	where $g_t$ is the conformal map from $\mathbb{H}\backslash\eta([0,t])$ to $\mathbb{H}$ with $\lim_{|z|\to\infty} |g_t(z)-z|=0$ and $W_t$ is $\sqrt{\kappa}$ times a standard Brownian motion. This curve starts at $0$, ends at $\infty$, and travels on the upper half plane $\mathbb{H}$ \cite{RS05}. Moreover, it has conformal invariance properties and therefore the definition can be extended to other domains (with other starting and ending points) via conformal maps. When $\kappa\in (0,4]$ the curve is simple, while for $\kappa>4$ the curve is self hitting (later on, when $\kappa>4$ we denote $\kappa$ by ${\kappa'} = \frac{16}{\kappa}>4$ for $\kappa\in (0,4]$, being consistent with \cite{MS16a, MS17}). We refer the reader to the lecture note \cite{berestycki-sle-notes} for more background on SLEs. 
	
	It is also possible to define a variant of the $\mathrm{SLE}_\kappa$ on $\mathbb{H}$ from 0 to $\infty$ known as the $\mathrm{SLE}_\kappa(\rho_1;\rho_2)$ on $\mathbb{H}$ from 0 to $\infty$, where $\rho_1,\rho_2>-2$. For $\kappa\in(0,4)$ the curve is still simple but a.s.\ hits (countably) infinitely many times the left (resp.\ right) boundary of $\mathbb H$  when $\rho_1<\frac\kappa 2-2$ (resp.\ $\rho_2<\frac\kappa 2-2$), and it does not hit at all the left (resp.\ right) boundary of $\mathbb H$  when $\rho_1\geq\frac\kappa 2-2$ (resp.\ $\rho_2\geq\frac\kappa 2-2$). Also in this case, the definition can be extended to other domains (with other starting and ending points) via conformal maps. See \cite[Section 2]{MS16a} for more details. 
	
	We shall also consider the \emph{whole-plane $\mathrm{SLE}_\kappa(\rho)$} for $\rho>-2$, which is a random curve in $\widehat{\mathbb{C}}$ from a starting point $z\in\mathbb C$ to $\infty$. For $\kappa\in(0,4)$ the curve hits itself (countably) infinitely many times when $\rho<\frac\kappa 2-2$, but does not hit itself at all when $\rho\geq\frac\kappa 2-2$. See \cite[Section 2.1]{MS17} for more details.

	Given a whole-plane GFF $\wh h$  viewed modulo a global additive integer multiple of $2\pi \chi$ (see~\cite[Section 2.2]{MS17} for further details) and $\theta\in\R$, one can construct the $\theta$-\emph{angle} \emph{flow lines} $\eta_{\theta}^{z}$ of $\wh h$ (or more precisely of $e^{i(\wh h/\chi+\theta)}$) from $z\in \mathbb{C}$ to $\infty$ as shown in \cite{MS16a, MS17}. The marginal law of $\eta_\theta^z$ is that of a whole-plane SLE$_{\kappa}(2-\kappa)$ curve from $z$ to $\infty$. We remark that we measure angles in counter-clockwise order, where zero angle corresponds to the north direction.
	
	For distinct $z,w\in \mathbb{C}$, the flow lines $\eta_{\theta}^{z}$ and $\eta_{\theta-\pi}^{z}$ cannot cross $\eta_{\theta}^{w} \cup \eta_{\theta-\pi}^{w}$, but they may hit and bounce off when $\kappa \in (2,4)$. See Figure \ref{flow-lines} for an illustration.
	Additionally, flow lines of $\wh{h}$ with the same angle started at different points of $\mathbb{Q}^2$ merge into each other when intersecting and form a tree \cite[Theorem 1.9]{MS17}. This gives an ordering of $\mathbb{Q}^2$, where $z\preceq w$ whenever the $\theta$-angle flow line from $z$ merges into the $\theta$-angle flow line from $w$ on the left side. Equivalently, $z \preceq w$ if and only if $z$ lies in a connected component of $\mathbb{C}\setminus (\eta_{\theta}^{w}\cup \eta_{\theta-\pi}^{w})$ which lies to the left of $\eta_{\theta}^{w}$ and to the right of $\eta_{\theta-\pi}^{w}$. 
	One can construct a unique Peano curve which visits points of $\mathbb{Q}^2$ with respect to this ordering \cite[Theorem 1.16]{MS17}. We call this curve the \textit{space-filling SLE$_{{\kappa'}}$  counterflow line of $\wh{h}$ in $\widehat{\mathbb{C}}$ with angle ${\theta-\frac{\pi}{2}}$} {and we denote this curve by $\eta'_{\theta-\frac{\pi}{2}}$.} 
	It follows from the construction that a.s.\ for any fixed $z\in\mathbb C$, the flow lines $\eta_{\theta}^{z}$ and $\eta_{\theta-\pi}^{z}$ are the left and right boundaries of $\eta'_{\theta-\frac{\pi}{2}}$ stopped upon hitting $z$.
	We highlight that any pair of counterflow lines of $\wh{h}$ with different angles or different starting points are not independent and their coupling is encoded via the whole-plane GFF $\wh h$.

	\begin{figure}[ht!]
		\begin{center}
			\includegraphics[width=.9\textwidth]{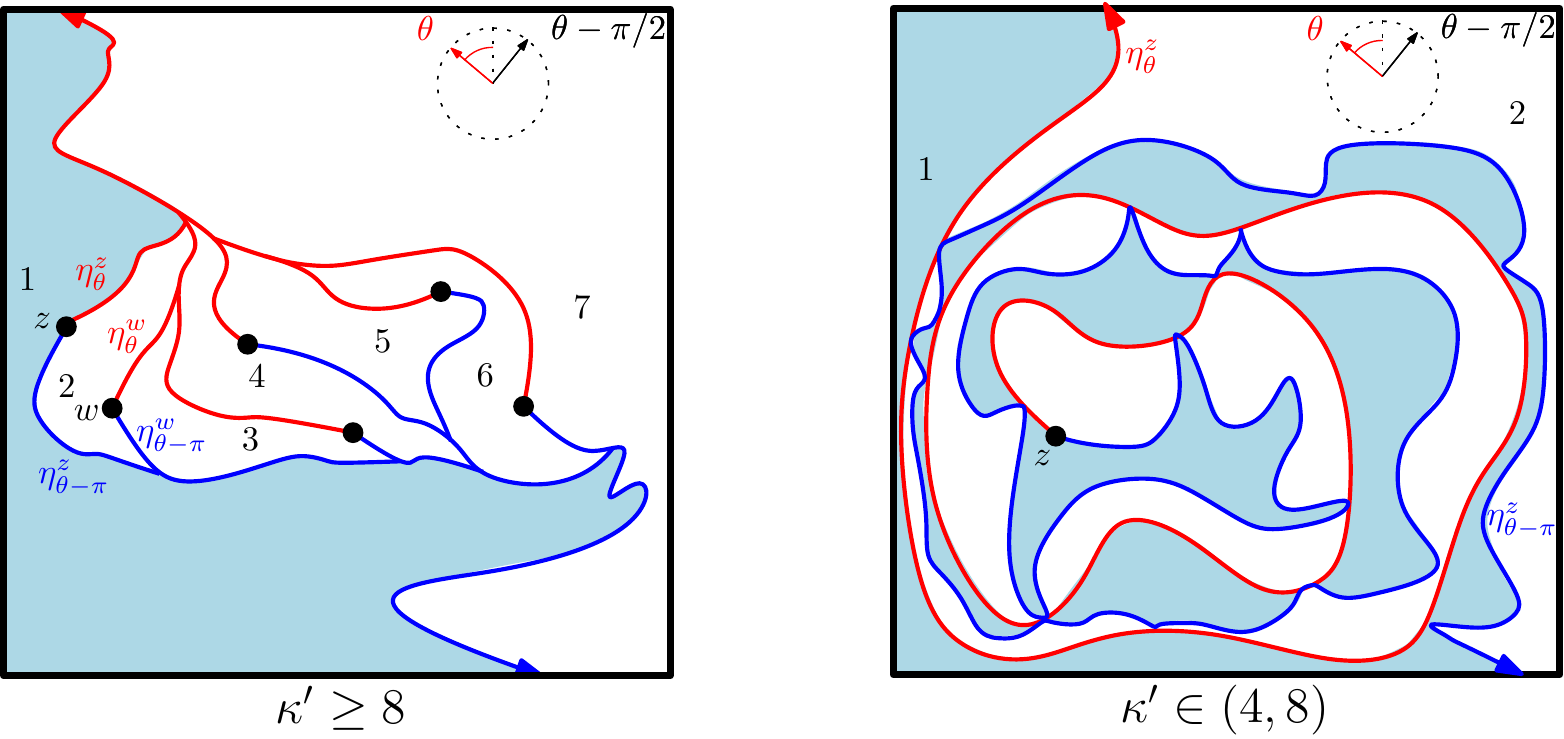}  
			\caption{\label{flow-lines} \textbf{Left:} The squared box is a portion of the complex plane $\mathbb C$. We fix $\theta$ as shown in the picture. We plot in red the flow lines $\eta_{\theta}^{x}$  and in blue the flow lines $\eta_{\theta-\pi}^{x}$ for six points $x\in\mathbb C$. For every $x\in\mathbb C$, the flow lines $\eta_{\theta}^{x}$ and $\eta_{\theta-\pi}^{x}$ are the left and right outer boundaries of the space-filling SLE$_{\kappa'}$ $\eta'_{\theta-\frac{\pi}{2}}$ stopped when it hits $x$. The space-filling SLE$_{\kappa'}$ $\eta'_{\theta-\frac{\pi}{2}}$ fills in the regions 1 (in light blue),2,3,4,5,6, and 7 in this order.  The left figure illustrates the case when ${\kappa'}\geq 8$. \textbf{Right:} The same illustration as in the left-hand side when ${\kappa'}\in (4,8)$. In this case we just considered a single point $z\in\mathbb C$. The flow lines $\eta_{\theta}^{z}$ (in red) and $\eta_{\theta-\pi}^{z}$ (in blue) started from the same point $z$ can hit each other and bounce off. The space-filling SLE$_{\kappa'}$ $\eta'_{\theta-\frac{\pi}{2}}$ fills first the regions 1 (in light blue) and then the region 2 (in white).}
		\end{center}
	\end{figure}

	\subsection{LQG description of the skew Brownian permuton}\label{sec:lqg-baxter}
	
	In this section, we prove Proposition~\ref{prop:permuton-q-area} by directly applying Theorem~\ref{thm:sbpfromlqg} (we invite the reader to review the statement of these theorem now that all the objects have been properly introduced). 
	Fix $\gamma \in(0,2)$ and an angle $\theta\in[-\frac{\pi}{2}, \frac{\pi}{2}]$. In what follows we consider:
	\begin{itemize}
		\item a unit-area $\gamma$-Liouville quantum sphere $(\widehat{{\mathbb C}}, h, 0, \infty)$ with two marked points at 0 and $\infty$;
		\item the associated $\gamma$-LQG area measure $\mu_h$ (which is in particular a random, non-atomic, Borel probability measure on $\widehat{{\mathbb C}}$ which assigns positive mass to every open subset of $\widehat{{\mathbb C}}$);
		\item an independent whole-plane GFF $\widehat h$ (viewed modulo a global additive integer multiple of $2\pi \chi$);
		\item two space-filling SLE$_{{\kappa'}}$ counterflow lines of $\widehat{h}$ in $\widehat{\mathbb{C}}$ with angles $0$ and ${\theta-\frac{\pi}{2}}$, denoted by $\eta':=\eta'_{0}$ and $\eta'_{\theta-\frac{\pi}{2}}$ and started from $\infty$ at time $t=0$ and ending at $\infty$ at $t=1$;
		\item the skew Brownian permuton $\mu_{\rho,q}$ with  $\rho=-\cos(\pi\gamma^2/4)\in(-1,1)$ and $q=q_\gamma(\theta)\in[0,1]$ as constructed in Theorem~\ref{thm:sbpfromlqg} from $(\mu_h,\eta',\eta'_{\theta-\frac{\pi}{2}})$.
	\end{itemize}   
	Also recall that $\psi_{\gamma,\theta}(t)$ is the first time when $\eta'_{\theta-\frac{\pi}{2}}$ hits $\eta'(t)$, and that the curves $\eta'$ and $\eta'_{\theta-\frac{\pi}{2}}$ are parametrized so that 
	$\mu_h(\eta'([s,t])) =\mu_h(\eta'_{\theta-\frac{\pi}{2}}([s,t])) = t-s$ for $0\le s<t\le 1$.
	
	\begin{proof}[Proof of Proposition~\ref{prop:permuton-q-area}]
		By \cite[Theorem 1.11]{borga2021skewperm}, the random measure $\mu_{\rho,q}$ is almost surely a permuton, i.e., almost surely its marginals are uniform. We first prove that for fixed $0\le x_1\le x_2\le 1$ and $0\le y_1\le y_2\le 1$, $\mu_{\rho,q}([x_1, x_2]\times [y_1, y_2])$ is a.s.\ equal to the quantum area of  $\eta'([x_1, x_2])\cap \eta'_{\theta-\frac{\pi}{2}}([y_1, y_2])$.   By Theorem~\ref{thm:sbpfromlqg}, we know that a.s.\ \begin{equation}\label{eqn-parameterize-1}
			\mu_{\rho,q}([x_1, x_2]\times [y_1, y_2]) = \textup{Leb}(\{t\in [x_1, x_2]: \psi_{\gamma,\theta}(t)\in [y_1, y_2] \}).
		\end{equation}
		Using the fact that the set of multiple points for $\eta'$ and $\eta'_{\theta-\frac{\pi}{2}}$ a.s.\ has zero quantum area (see e.g.\ \cite[Section B.5]{DMS14}), and since we are parameterizing $\eta'_{\theta-\frac{\pi}{2}}$ by quantum area, it follows that a.s.\ for almost every $t\in [0,1]$, $\psi_{\gamma,\theta}(t)\in [y_1, y_2]$ if and only if $\eta'(t)\in  \eta'_{\theta-\frac{\pi}{2}}([y_1, y_2])$. This implies that a.s.\, 	\begin{equation}\label{eqn-parameterize-2}
			\textup{Leb}(\{t\in [x_1, x_2]:  \psi_{\gamma,\theta}(t)\in [y_1, y_2] \}) = \textup{Leb}(\{t\in [x_1, x_2]:  \eta'(t)\in  \eta'_{\theta-\frac{\pi}{2}}([y_1, y_2]) \}).
		\end{equation} 
		Again since we are parameterizing $\eta'$ using quantum area, by \eqref{eqn-parameterize-1} and \eqref{eqn-parameterize-2} it follows that a.s.
		\begin{equation}\label{eq:permuton-q-area}
			\mu_{\rho,q}\Big([x_1, x_2]\times [y_1, y_2]\Big)=\mu_{h}\Big(  \eta'_0([x_1, x_2])\cap \eta'_{\theta-\frac{\pi}{2}}([y_1, y_2])\Big),
		\end{equation} 
		for fixed $0\le x_1\le x_2\le 1$ and $0\le y_1\le y_2\le 1$.
		Now we fix $x_1,y_1,y_2$ and let $x_2$ vary. By
		Fubini's theorem, there exists a set $\mathbf{A}_{x_1,y_1,y_2}\subset [x_1,1]$ with Lebesgue measure zero, such that \eqref{eq:permuton-q-area} holds for any $x_2\in [x_1,1]\backslash\mathbf{A}_{x_1,y_1,y_2}$. Since both sides of \eqref{eq:permuton-q-area} are monotone in $x_2$, and the right-hand side of \eqref{eq:permuton-q-area} is a.s.\ continuous in $x_2$ (this follows because {$\gamma$-}LQG measure a.s.\ has no atoms), we see that a.s.\ for fixed $x_1,y_1,y_2$ and all $x_2\in [x_1,1]$, \eqref{eq:permuton-q-area} holds. We can continue this argument by fixing $y_1$ and $y_2$ and letting both of $x_1$ and $x_2$ vary, and then only fixing $y_1$, and finally letting $x_1,x_2,y_1,y_2$ vary. Therefore we arrive at the conclusion that a.s.\ \eqref{eq:permuton-q-area} holds for all $0\le x_1\le x_2\le 1$ and $0\le y_1\le y_2\le 1$. 
	\end{proof} 
	
	\section{Density of the Baxter permuton}
	\label{sec:density}
	
	In this section, building  on Proposition~\ref{prop:permuton-q-area}, we study the expectation of the skew Brownian permuton $\mu_{\rho,q}$ and express it in terms of the law of the areas of certain quantum disks. 
	In the special case $q = \frac{1}{2}$ and $\rho = -\frac{1}{2}$, i.e.\ when $\mu_{\rho,q}$ is the Baxter permuton, we compute this area law by considering the random duration of certain Brownian excursions and derive Theorem~\ref{thm-baxter-density}. The main tools are the rerooting invariance for marked points of quantum spheres and the conformal welding of quantum disks. 
	
	This section is organized as follows. In Sections~\ref{sec:conf-welding}~and~\ref{sect:res_prop}, we review the input from conformal welding  and the rerooting invariance, respectively. Then, in Section~\ref{sec:density-a}, we give an expression for the intensity measure of the skew Brownian permuton and in particular we prove Theorem~\ref{thm-baxter-density}. Finally, in Section~\ref{sec:E21theta} we will show that the expected occurrence  $\bbE[(\pocc(21, \mu_{\rho,q}))]$ linearly depends on $\theta=\theta_\gamma(q)$, proving Proposition~\ref{prop-expected-occ-21}. 
	
	{Throughout this section we fix $\gamma\in(0,2)$ and $\kappa=\gamma^2\in (0,4)$, except that in Section~\ref{sec:density-Baxter} we restrict to the Baxter case where $\gamma=\sqrt{4/3}$.}
	
	\subsection{Conformal welding of quantum disks}\label{sec:conf-welding}
	
	We start by reviewing in Section~\ref{sec:quantum-disks} the disintegration and the scaling properties of quantum disks and spheres, 
	and then in Section~\ref{sec:qs-conf-welding} we recall the notion of conformal welding of quantum disks from \cite{AHS20}. 
	
	\subsubsection{{P}roperties of quantum disks and quantum spheres}\label{sec:quantum-disks}
	
	We recap the disintegration of measures on quantum surfaces as in \cite[Section 2.6]{AHS20}. For the infinite measure $\mathcal{M}_2^{\text{disk}}(W)$, one has the following \textit{disintegration} for the 
	{quantum boundary length}:
	\begin{equation}\label{eq:dis-formula}
		\mathcal{M}_2^{\text{disk}}(W) = \int_{0}^\infty\int_{0}^\infty \mathcal{M}_2^{\text{disk}}(W; \ell_1, \ell_2)\,d\ell_1\,d\ell_2,
	\end{equation} 
	where $\mathcal{M}_2^{\text{disk}}(W; \ell_1, \ell_2)$ are $\sigma$-finite measures supported on doubly boundary-marked quantum surfaces with left and right boundary arcs having quantum length $\ell_1$ and $\ell_2${, respectively}. See for instance \cite[Definition 2.22 and Proposition 2.23]{AHS20}. 
	We remark that the exact meaning of the identity in \eqref{eq:dis-formula} is that $\mathcal{M}_2^{\text{disk}}(W)\,(S) = \int_0^\infty\int_0^\infty \mathcal{M}_2^{\text{disk}}(W; \ell_1, \ell_2)\,(S)\,d\ell_1\,d\ell_2$ for all measurable sets $S$. 
	The measure $\mathcal{M}_2^{\text{disk}}(W; \ell_1, \ell_2)$ is finite when $W<2+\frac{\gamma^2}{2}$ (see e.g.\ \cite[Lemmas 2.16 and 2.18]{AHS20}); the measure is also finite for certain larger $W$ (e.g.\ $W=4$) but the range $W<2+\frac{\gamma^2}{2}$ is sufficient for us (see the proof of Lemma \ref{prop-density}).

	Using precisely the same argument, we can disintegrate the measure $\mathcal{M}_2^{\text{sph}}(W)$ over the quantum area $A$. In particular, we have 
	\begin{equation}\label{eqn-disintegrate-area}
		\mathcal{M}_2^{\text{sph}}(W) = \int_0^\infty \mathcal{M}_2^{\text{sph}}(W; a)\, da,
	\end{equation}
	where for all $a>0$ the measures $\mathcal{M}_2^{\text{sph}}(W; a)$ are $\sigma$-finite (and finite if and only if $W<4$~\cite{RV10}) supported on doubly marked quantum surfaces with quantum area $A=a$. 
	
	We also remark that if $(D,h,x,y)$ is a sample from $\mathcal{M}_2^{\text{disk}}(W; \ell_1, \ell_2)$ or from $\mathcal{M}_2^{\text{sph}}(W; a)$, then $h$ is a random field on $D$ (more precisely, a random distribution on $D$) and in particular not a random field modulo additive constant.
	
	The next input is a scaling property of quantum disks. Recall our definition of sampling given at the beginning of Section~\ref{sect:perm-lqg-corr}.
	\begin{lemma}[Lemma 2.24 of \cite{AHS20}]\label{lmm-scale-disks}
		Fix $W, \ell_1, \ell_2>0$. The following two random variables sampled as follows have the same law for all $\lambda>0$:
		\begin{enumerate}
			\item Sample a quantum disk from $\mathcal{M}_2^{\textup{disk}}(W; \lambda \ell_1, \lambda\ell_2)$;
			\item  Sample a quantum disk from $\lambda^{-\frac{2W}{\gamma^2}-1}\mathcal{M}_2^{\textup{disk}}(W; \ell_1, \ell_2)$ and add $\frac{2}{\gamma}\log\lambda$ to its field.
		\end{enumerate} 
	\end{lemma}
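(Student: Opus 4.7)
The plan is to track how the measure $\mathcal{M}_2^{\textup{disk}}(W)$ transforms under the field shift $T_\lambda\colon h\mapsto h+\tfrac{2}{\gamma}\log\lambda$ and then restrict to the length fibers via the disintegration \eqref{eq:dis-formula}. Throughout, set $t:=\tfrac{2}{\gamma}\log\lambda$, and focus first on the thick case $W\ge\gamma^2/2$. In Definition~\ref{def-quantum-disk}, the only piece of the sampling data sensitive to an additive constant is the intercept $\mathbf{c}$, drawn from the infinite measure $\tfrac{\gamma}{2}e^{(\beta-Q)c}\,dc$ with $\beta=\gamma+(2-W)/\gamma$. Replacing $\mathbf{c}$ by $\mathbf{c}+t$ translates this density, and a direct ratio computation gives
\begin{equation*}
(T_\lambda)_*\mathcal{M}_2^{\textup{disk}}(W)=e^{(Q-\beta)t}\,\mathcal{M}_2^{\textup{disk}}(W).
\end{equation*}

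Next I transfer this identity to the length-indexed fibers. From $\nu_{h+t}=e^{\gamma t/2}\nu_h=\lambda\nu_h$, the map $T_\lambda$ sends the fiber at $(\ell_1,\ell_2)$ to a measure supported on surfaces with boundary lengths $(\lambda\ell_1,\lambda\ell_2)$. Disintegrating the pushforward identity above via \eqref{eq:dis-formula} and performing the change of variables $\ell_i'=\lambda\ell_i$ (with Jacobian $\lambda^{-2}$), uniqueness of the disintegrating kernels yields
\begin{equation*}
\mathcal{M}_2^{\textup{disk}}(W;\lambda\ell_1,\lambda\ell_2)=\lambda^{-2}e^{-(Q-\beta)t}\,(T_\lambda)_*\mathcal{M}_2^{\textup{disk}}(W;\ell_1,\ell_2).
\end{equation*}
Plugging in $Q-\beta=W/\gamma-\gamma/2$ yields $(Q-\beta)t=(2W/\gamma^2-1)\log\lambda$, so the prefactor collapses exactly to $\lambda^{-2W/\gamma^2-1}$, which is the lemma's constant in the thick case.

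For the thin case $W\in(0,\gamma^2/2)$, the thin disk is the ordered collection $\{\mathcal{D}_u\}$ of bubbles from a Poisson point process with intensity $\mathbf{1}_{[0,T]}du\times\mathcal{M}_2^{\textup{disk}}(\gamma^2-W)$, where $T$ is an independent auxiliary variable (see Definition~\ref{def-thin-disk}). I apply $T_\lambda$ simultaneously to every bubble: since total left/right boundary length is additive over bubbles it again scales by $\lambda$, while $T$ is field-independent and hence unchanged. The thick-case identity applied to weight $\gamma^2-W$ shows that the PPP intensity is rescaled by the constant $e^{(Q-\beta')t}$ with $\beta'=(2+W)/\gamma$; absorbing this constant into a time rescaling of the $u$-axis leaves the ordered collection of bubbles invariant in law, after which disintegrating over the total boundary length reproduces the same prefactor $\lambda^{-2W/\gamma^2-1}$.

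The main obstacle is this last step: one must verify that rescaling the PPP intensity by the constant from the thick-case identity is compatible with shifting all bubbles by $T_\lambda$ and simultaneously disintegrating along the total boundary length. This is a delicate measure-theoretic bookkeeping exercise, given that $\mathcal{M}_2^{\textup{disk}}(\gamma^2-W)$ is an infinite measure and the thin disk carries infinitely many bubbles; once it is settled, however, the thin case reduces cleanly to the thick computation applied bubble by bubble.
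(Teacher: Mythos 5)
There is nothing to compare against inside the paper: Lemma~\ref{lmm-scale-disks} is imported verbatim from \cite[Lemma 2.24]{AHS20} and no proof is given here, so I can only judge your argument on its own terms. Your thick-disk half is correct and is the natural argument: adding $t=\frac{2}{\gamma}\log\lambda$ to the field only shifts the intercept $\mathbf{c}$ in Definition~\ref{def-quantum-disk} (the conditioning on the Brownian paths and the lateral part $h_2$ are unaffected), the law $\frac{\gamma}{2}e^{(\beta-Q)c}dc$ picks up the factor $e^{(Q-\beta)t}$, boundary lengths scale by $e^{\gamma t/2}=\lambda$, the change of variables in \eqref{eq:dis-formula} contributes $\lambda^{-2}$, and $(Q-\beta)\frac{2}{\gamma}=\frac{2W}{\gamma^2}-1$ collapses everything to the stated constant. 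One caveat: invoking ``uniqueness of the disintegrating kernels'' only identifies the two families for Lebesgue-a.e.\ $(\ell_1,\ell_2)$, whereas the lemma is asserted for every fixed pair; to close this you need either the canonical construction of the fiber measures in \cite{AHS20} or a continuity/scaling argument upgrading the a.e.\ identity to all $(\ell_1,\ell_2)$.

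The genuine gap is in the thin case, and it sits exactly where you flagged the ``main obstacle'' --- but the write-up is wrong as stated rather than merely incomplete. You assert that $T$ is ``unchanged'' and that absorbing the constant $c:=e^{(Q-\beta')t}=\lambda^{1-\frac{2W}{\gamma^2}}$ into a rescaling of the $u$-axis ``leaves the ordered collection of bubbles invariant in law.'' Conditionally on $T$, rescaling $u\mapsto cu$ converts a Poisson point process of intensity $\mathds{1}_{[0,T]}du\times c\,\mathcal{M}_2^{\textup{disk}}(\gamma^2-W)$ into one of intensity $\mathds{1}_{[0,cT]}du\times \mathcal{M}_2^{\textup{disk}}(\gamma^2-W)$: it replaces the horizon $T$ by $cT$, it does not leave it fixed. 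The required prefactor then comes precisely from the law of $T$: since $T$ is sampled from a constant multiple of $\Leb_{\mathbb{R}_+}$, the law of $cT$ is $c^{-1}$ times the law of $T$, whence $(T_\lambda)_*\mathcal{M}_2^{\textup{disk}}(W)=\lambda^{\frac{2W}{\gamma^2}-1}\mathcal{M}_2^{\textup{disk}}(W)$ --- the same identity as in the thick case --- and the disintegration step is then verbatim the same, giving $\lambda^{-\frac{2W}{\gamma^2}-1}$. If $T$ were truly unchanged, the thin case would output the incorrect prefactor $\lambda^{-2}$. So the missing step is not a delicate bookkeeping exercise but a short, essential observation about the scale covariance of the Lebesgue law of $T$, and it must be stated for the proof to be complete.
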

	Similarly we have the following scaling property for quantum spheres, which can be proved in
	the same manner as \cite[Lemma 2.24]{AHS20}.
	\begin{lemma}\label{lmm-scale-sphere}
		Fix $W, a>0$. The following two random variables sampled as follows have the same law for all $\lambda>0$:
		\begin{enumerate}
			\item Sample a quantum sphere from $\mathcal{M}_2^{\textup{sph}}(W; \lambda a)$;
			\item Sample a quantum sphere from $\lambda^{-\frac{W}{\gamma^2}-1}\mathcal{M}_2^{\textup{sph}}(W; a)$ and add $\frac{1}{\gamma}\log\lambda$ to its field.
		\end{enumerate} 
	\end{lemma}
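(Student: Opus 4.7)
The plan is to mimic the proof of the disk version (\cite[Lemma 2.24]{AHS20}) by tracing through how the defining ingredients of $\mathcal M_2^{\textup{sph}}(W)$ in Definition~\ref{def-quantum-sphere} transform when one shifts the field by the constant $\frac{1}{\gamma}\log\lambda$. The key observation is that among the ingredients $(X_t)_{t\in\mathbb R}$ and $h_2$, only the random constant $\mathbf c$ is affected by a global shift of $\psi$: the Brownian motions $B,\wt B$ (with their conditioning) and the projection $h_2$ are translation-invariant in law. So shifting $\psi$ by $\tfrac1\gamma\log\lambda$ is equivalent to replacing $\mathbf c$ by $\mathbf c+\tfrac1\gamma\log\lambda$.

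First I would compute the Radon--Nikodym effect of this shift on the law of $\mathbf c$. The reference measure is $\tfrac{\gamma}{2}e^{2(\alpha-Q)c}dc$ with $\alpha-Q=-\tfrac{W}{2\gamma}$. Pushing forward under $c\mapsto c+\tfrac1\gamma\log\lambda$ multiplies the density by $e^{-2(\alpha-Q)\cdot\frac1\gamma\log\lambda}=\lambda^{-\frac{2(\alpha-Q)}{\gamma}}=\lambda^{W/\gamma^2}$. Denoting $\Phi_\lambda\colon\psi\mapsto\psi+\tfrac1\gamma\log\lambda$, this gives the global scaling identity
\begin{equation*}
(\Phi_\lambda)_*\,\mathcal M_2^{\textup{sph}}(W)=\lambda^{W/\gamma^2}\,\mathcal M_2^{\textup{sph}}(W).
\end{equation*}

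Next I would combine this with the disintegration \eqref{eqn-disintegrate-area} over the quantum area. Since $\mu_{\psi+c}=e^{\gamma c}\mu_\psi$, the shift $\Phi_\lambda$ sends area $a$ to $\lambda a$, so $(\Phi_\lambda)_*\mathcal M_2^{\textup{sph}}(W;a)$ is supported on surfaces of area $\lambda a$. Disintegrating both sides of the displayed identity in $a$ and applying the change of variable $b=\lambda a$ on the right,
\begin{equation*}
\int_0^\infty(\Phi_\lambda)_*\mathcal M_2^{\textup{sph}}(W;a)\,da
=\lambda^{W/\gamma^2}\int_0^\infty\mathcal M_2^{\textup{sph}}(W;b)\,db
=\lambda^{W/\gamma^2+1}\int_0^\infty\mathcal M_2^{\textup{sph}}(W;\lambda a)\,da,
\end{equation*}
and matching the two $\sigma$-finite disintegrations on the fiber over $\lambda a$ yields
\begin{equation*}
(\Phi_\lambda)_*\mathcal M_2^{\textup{sph}}(W;a)=\lambda^{W/\gamma^2+1}\,\mathcal M_2^{\textup{sph}}(W;\lambda a),
\end{equation*}
which is exactly the claim after rearranging.

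The only genuinely subtle step is justifying that the disintegration can be matched fiberwise after the change of variable, i.e.\ that equality of the integrated measures upgrades to equality $da$-a.e.\ of the fiber measures; this is standard for $\sigma$-finite disintegrations and is handled as in the disk case of \cite[Lemma 2.24]{AHS20}. I expect this is the only potentially delicate bookkeeping step; the computation of the Radon--Nikodym factor is a direct calculation, and the translation invariance of $B,\wt B,h_2$ is immediate from their definitions.
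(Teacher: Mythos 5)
Your proposal is correct and follows essentially the same route as the paper, which simply invokes the argument of \cite[Lemma 2.24]{AHS20}: a global shift of the field by $\frac{1}{\gamma}\log\lambda$ only translates the constant $\mathbf{c}$, producing the Radon--Nikodym factor $\lambda^{W/\gamma^2}$ on $\mathcal{M}_2^{\textup{sph}}(W)$, and combining this with the area disintegration \eqref{eqn-disintegrate-area} and the change of variables $b=\lambda a$ gives the stated fiberwise identity. The only caveat you already flag (matching the disintegrations fiberwise, i.e.\ for a.e.\ and then all $a$ via the choice of version) is handled exactly as in the disk case, so nothing further is needed.
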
 
	
	We end this subsection with two results on the measures $\mathcal{M}_2^{\textup{disk}}(W; \ell_1, \ell_2)$.
	Before stating them, let us recall the definition of the \textit{Brownian excursion in a cone with non-fixed duration} as constructed in  \cite[Section 3]{LW04}. (We highlight that here we are considering non-fixed time interval excursions. This is a key difference with the Brownian loops introduced in Section~\ref{sect:con_perm_SBP}, where the time interval was fixed and equal to $[0,1]$.)
	Fix an angle $\phi\in(0, 2\pi)$ and let $\mathcal{C}_\phi$ be the cone $\{z\in\mathbb{C}:\arg z\in (0,\phi)\}$. Let $\mathcal{K}$ be the collection of continuous planar curves $\gamma$ {in}
	$\mathcal{C}_\phi$ defined for time $t\in[0, t_\gamma]$, where $t_\gamma$ is the 
	duration of the curve. Then $\mathcal{K}$ can be seen as a metric space with
	\[
	d(\gamma_1, \gamma_2) = \inf_{\beta}\left\{\sup_{0<t<t_{\gamma_1}}|t-\beta(t)|+|\gamma_1(t)-\gamma_2(\beta(t))|\right\},
	\]
	where $\beta$ ranges from all the possible increasing homeomorphisms from $[0,t_{\gamma_1}]$ to $[0,t_{\gamma_2}]$. For $z\in\mathcal{C}_\phi$ and  $r>0$, let $\mu_{\mathcal{C}_\phi}^\#(z, re^{i\phi})$ be the law of the standard planar Brownian motion starting from $z$ and conditioned on exiting $\mathcal{C}_\phi$ at $re^{i\phi}$ (see \cite[Section 3.1.2]{LW04} for further details on this conditioning). This is a Borel probability measure on $\mathcal{K}$, and for all  $\ell, r>0$ {the following limit exists for the Prohorov metric} 
	\begin{equation}\label{eqn-excursion-convergence}
		\lim_{\e\to 0} \mu_{\mathcal{C}_\phi}^\#(\ell+i\e, re^{i\phi}).
	\end{equation}
	We denote the limiting measure by $\mu_{\mathcal{C}_\phi}^\#(\ell, re^{i\phi})$ and call it the law of the Brownian excursion in the cone $\mathcal{C}_\phi$ from $\ell$ to $re^{i\phi}$ with non-fixed duration.
	
	The next result describes the area of a disk sampled from $\mathcal{M}_2^{\textup{disk}}(W; \ell_1, \ell_2)$  when $W=\frac{\gamma^2}{2}$. We remark that this result holds only in the special case $W=\frac{\gamma^2}{2}$. Recall also from the beginning of Section \ref{sect:perm-LQG-cor} that for a finite measure $\nu$ we let $|\nu|$ be its total mass and we let $\nu^\#: = \frac{\nu}{|\nu|}$ denote its normalized version.
	
	\begin{proposition}[Proposition 7.7 of \cite{AHS20}]\label{prop-disk-excursion}
		Fix $\gamma\in (0,2)$ and $\phi = \frac{\pi\gamma^2}{4}$. There exists a constant $c{>0}$ such that for all $\ell_1, \ell_2>0$, 
		\begin{equation}\label{eqn-size-disk-measure}
			\left|\mathcal{M}_2^{\textup{disk}}\left(\frac{\gamma^2}{2}; \ell_1, \ell_2 \right)\right| = c \, \frac{(\ell_1 \ell_2)^{\frac{4}{\gamma^2}-1}}{\left(\ell_1^{\frac{4}{\gamma^2}}+\ell_2^{\frac{4}{\gamma^2}}\right)^{2}}.
		\end{equation}
		Moreover, the quantum area of a sample from $\mathcal{M}_2^{\textup{disk}}(\frac{\gamma^2}{2}; \ell_1, \ell_2)^\#$ has the same law as the duration of a sample from $\mu_{\mathcal{C}_\phi}^\#(\ell_1\sqrt{2\sin\phi}, \ell_2\sqrt{2\sin\phi}e^{i\phi})$.
	\end{proposition}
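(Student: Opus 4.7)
The plan is to prove both statements simultaneously by identifying the measure $\mathcal{M}_2^{\textup{disk}}(\gamma^2/2;\ell_1,\ell_2)$ (up to a multiplicative constant) with the Brownian cone excursion measure $\mu_{\mathcal{C}_\phi}(\ell_1\sqrt{2\sin\phi},\ell_2\sqrt{2\sin\phi}e^{i\phi})$ via a mating-of-trees correspondence, and then computing the total mass of the latter explicitly using conformal invariance.

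First I would set up the mating-of-trees picture at the weight $W=\gamma^2/2$. Cutting an appropriate base quantum surface (for example a quantum wedge or cone) by an independent space-filling $\mathrm{SLE}_{\kappa'}$ curve decomposes it into two sub-surfaces whose left and right boundary-length processes, parametrized by quantum area, form a correlated two-dimensional Brownian motion of correlation $-\cos(\pi\gamma^2/4)=-\cos\phi$. A linear change of coordinates transforms this correlated motion into a standard planar Brownian motion confined to the cone $\mathcal{C}_\phi$ of angle $\phi=\pi\gamma^2/4$; the Jacobian of this change carries the boundary-length coordinate to a cone coordinate scaled by $\sqrt{2\sin\phi}$. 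A weight $\gamma^2/2$ quantum disk with left and right boundary lengths $\ell_1,\ell_2$ can then be realized as the portion of the base surface swept out by a Brownian excursion in $\mathcal{C}_\phi$ starting at $\ell_1\sqrt{2\sin\phi}$ on the real axis and exiting at $\ell_2\sqrt{2\sin\phi}e^{i\phi}$ on the other bounding ray. Under this identification, the quantum area of the disk equals the duration of the underlying excursion, which gives the area-law claim.

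For the mass formula I would compute $|\mu_{\mathcal{C}_\phi}(z,w)|$ directly. The conformal map $z\mapsto z^{\pi/\phi}$ sends $\mathcal{C}_\phi$ to the upper half plane $\mathbb{H}$ and the two bounding rays to the positive and negative real axes respectively; note that $\pi/\phi=4/\gamma^2$. Combining the standard formula for the boundary-to-boundary half-plane excursion mass, which is proportional to $(x+y)^{-2}$ for endpoints $x>0$ and $-y<0$, with the conformal transformation rule for excursion measures (a factor of $|f'|$ at each endpoint) yields
\[
\bigl|\mu_{\mathcal{C}_\phi}(\ell_1\sqrt{2\sin\phi},\ell_2\sqrt{2\sin\phi}e^{i\phi})\bigr|\;\propto\;\frac{(\ell_1\ell_2)^{4/\gamma^2-1}}{\bigl(\ell_1^{4/\gamma^2}+\ell_2^{4/\gamma^2}\bigr)^{2}},
\]
absorbing all remaining constant factors (including those coming from the $\sqrt{2\sin\phi}$ scaling) into a single constant $c$. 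As a sanity check, this expression is homogeneous of degree $-2=-\tfrac{2W}{\gamma^2}-1$ in $(\ell_1,\ell_2)$ for $W=\gamma^2/2$, matching exactly the scaling predicted by Lemma~\ref{lmm-scale-disks}.

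The main obstacle is establishing the mating-of-trees correspondence rigorously at the specific weight $\gamma^2/2$ and tracking all normalization constants: the $\sqrt{2\sin\phi}$ factor relating the quantum boundary length to the planar cone coordinate, the reparametrization from quantum natural time to quantum area, and the overall proportionality constant between $\mathcal{M}_2^{\textup{disk}}(\gamma^2/2;\ell_1,\ell_2)$ and the cone excursion measure. Once this identification is in place the Brownian excursion computation is a routine application of conformal invariance, and the same identification immediately yields the distributional statement about the quantum area of the normalized measure.
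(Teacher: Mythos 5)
This proposition is not proved in the paper at all: it is imported verbatim as Proposition 7.7 of \cite{AHS20}, so the only meaningful comparison is with the proof there, which rests on the conformal-welding and mating-of-trees machinery developed in that paper. The second half of your plan is fine: computing the mass of the cone excursion measure by mapping $\mathcal{C}_\phi$ to $\mathbb{H}$ via $z\mapsto z^{\pi/\phi}=z^{4/\gamma^2}$, using the $|x-y|^{-2}$ boundary-to-boundary half-plane mass and the factor $|f'|$ at each endpoint, does reproduce the right-hand side of \eqref{eqn-size-disk-measure} up to a constant, and your homogeneity check against Lemma~\ref{lmm-scale-disks} is correct.

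The genuine gap is that the entire content of the proposition is the identification you assert in a single sentence: that the un-normalized measure $\mathcal{M}_2^{\textup{disk}}(\frac{\gamma^2}{2};\ell_1,\ell_2)$ coincides, up to an $(\ell_1,\ell_2)$-independent constant, with the excursion measure in $\mathcal{C}_\phi$ between $\ell_1\sqrt{2\sin\phi}$ and $\ell_2\sqrt{2\sin\phi}e^{i\phi}$, with quantum area corresponding to duration. This is not a routine consequence of the standard mating-of-trees statements: \cite{DMS14} identifies the $\gamma$-quantum cone decorated by space-filling $\mathrm{SLE}_{\kappa'}$ with a two-sided correlated Brownian motion, and related results identify the weight-$2$ disk with a quadrant excursion run from $(\ell_L,\ell_R)$ to the corner; none of these directly realizes a weight-$\frac{\gamma^2}{2}$ doubly marked disk as ``the portion of the base surface swept out by a ray-to-ray cone excursion.'' Indeed, for $\gamma\in(\sqrt2,2)$ the cells swept by space-filling $\mathrm{SLE}_{\kappa'}$ are not simply connected, whereas the weight-$\frac{\gamma^2}{2}$ disk is thick, so the asserted realization requires a genuine argument rather than a citation. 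The explicit factor $\sqrt{2\sin\phi}$ is likewise not free: it encodes the exact normalization relating quantum boundary length and quantum area to the Brownian coordinates (equivalently, the diffusivity of the peanosphere Brownian motion), which is not pinned down in \cite{DMS14} and is precisely the kind of exact input that the welding machinery of \cite{AHS20} is needed to supply. Finally, even granting the identification of the \emph{normalized} laws (the second assertion), the mass formula \eqref{eqn-size-disk-measure} needs the correspondence at the level of infinite measures, with the disintegration over $(\ell_1,\ell_2)$ on the LQG side matching the endpoint dependence of the excursion measure; your proposal flags all of this as ``the main obstacle'' but offers no mechanism for it, so the essential LQG content of the proposition is assumed rather than proven.
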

	
	The next result holds for arbitrary $W\in(0,2+\frac{\gamma^2}{2})$. The upper bound   $2+\frac{\gamma^2}{2}$ guarantees that the measure $\mathcal{M}_2^{\textup{disk}}(W; \ell_1, \ell_2)$ is finite, as explained after \eqref{eq:dis-formula}.
	
	\begin{lemma}\label{prop-density}
		For any $W\in(0,2+\frac{\gamma^2}{2})$ and $\ell_1,\ell_2>0$ the quantum area of a sample from  $\mathcal{M}_2^{\textup{disk}}(W; \ell_1, \ell_2)$ is absolutely continuous with respect to Lebesgue measure.
	\end{lemma}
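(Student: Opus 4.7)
The plan is to reduce to the weight $W=\gamma^2/2$, for which Proposition~\ref{prop-disk-excursion} identifies the area with the duration of a Brownian excursion in a cone and so gives absolute continuity for free. Two mechanisms carry the result to the full range $W\in(0,2+\gamma^2/2)$: conformal welding of quantum disks (to move up from $\gamma^2/2$ to thicker weights) and the Poisson bead decomposition of Definition~\ref{def-thin-disk} (to move from thick weights down to thin ones). I will split into three cases accordingly.

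At $W=\gamma^2/2$, Proposition~\ref{prop-disk-excursion} gives that the area under $\mathcal{M}_2^{\textup{disk}}(\gamma^2/2;\ell_1,\ell_2)^\#$ has the law of the duration of a non-degenerate Brownian excursion in the cone $\mathcal{C}_{\pi\gamma^2/4}$ between two prescribed boundary points, which is classically absolutely continuous on $(0,\infty)$.

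For the thick regime $W\in(\gamma^2/2,2+\gamma^2/2)$, I will write $W=\gamma^2/2+W'$ with $W'\in(0,2)$ and invoke the conformal welding identity of~\cite{AHS20} (to be reviewed in Section~\ref{sec:qs-conf-welding}) in its form disintegrated along the two boundary lengths: a sample from $\mathcal{M}_2^{\textup{disk}}(W;\ell_1,\ell_2)$ decorated by the interface SLE arises, after integration over the interface quantum length $\ell\in(0,\infty)$ and a Radon--Nikodym correction, by welding independent samples from $\mathcal{M}_2^{\textup{disk}}(\gamma^2/2;\ell_1,\ell)$ and $\mathcal{M}_2^{\textup{disk}}(W';\ell,\ell_2)$ along their length-$\ell$ arcs. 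Since quantum area is additive under welding, the total area is the sum of two conditionally-independent-given-$\ell$ sub-areas; the first one has a density by the previous case, convolution with a density is again a density, and integrating out $\ell$ against a finite measure preserves absolute continuity. For the thin regime $W\in(0,\gamma^2/2)$, Definition~\ref{def-thin-disk} presents a sample from $\mathcal{M}_2^{\textup{disk}}(W;\ell_1,\ell_2)$ as a Poissonian concatenation of weight $\gamma^2-W$ doubly-marked thick beads, the total area being the almost surely finite sum of the bead areas. Since $\gamma^2-W\in(\gamma^2/2,2+\gamma^2/2)$ lies in the thick range just handled, each bead area has a density given its own boundary lengths; the beads are conditionally independent given those lengths, so the total area is a convergent sum of independent positive random variables at least one of which is absolutely continuous, and hence is itself absolutely continuous.

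I expect the thick case to be the main obstacle. One has to quote the welding identity of~\cite{AHS20} in a form compatible with simultaneous conditioning on both boundary lengths $\ell_1,\ell_2$ of the combined disk, keeping careful track of which boundary arcs are identified, which $\mathrm{SLE}_\kappa(\rho_1;\rho_2)$ decorates the seam, and what Radon--Nikodym weight enters, so that the ``conditional independence of sub-areas given $\ell$'' assertion genuinely holds at the level of the finite measures $\mathcal{M}_2^{\textup{disk}}(W;\ell_1,\ell_2)$. Once this welding statement is properly imported, the remaining measure-theoretic steps are routine.
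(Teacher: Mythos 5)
Your proposal is correct and follows essentially the same route as the paper: Proposition~\ref{prop-disk-excursion} for $W=\gamma^2/2$, the conformal welding of quantum disks from \cite{AHS20} (with weights $\gamma^2/2$ and $W-\gamma^2/2$, using finiteness of $\mathcal{M}_2^{\textup{disk}}(W;\ell_1,\ell_2)$ for $W<2+\gamma^2/2$) plus the fact that a sum of independent variables has a density if one summand does, and the thin-disk bead decomposition of Definition~\ref{def-thin-disk} for $W\in(0,\gamma^2/2)$ since the beads have weight $\gamma^2-W$ in the thick range already treated.
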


	\begin{proof}
		For $W=\gamma^2/2$ the result follows from Proposition~\ref{prop-disk-excursion}. For $W\in (\gamma^2/2,2+\frac{\gamma^2}{2})$ we get the result from \cite[Theorem 2.2]{AHS20}\footnote{The inexperienced reader might consider to skip this part of the proof at a first read and come back to this proof after reading Section~\ref{sec:qs-conf-welding} where a counterpart of \cite[Theorem 2.2]{AHS20} for quantum spheres (i.e.\ Theorem~\ref{thm-sph-welding} below) will be presented in detail.} with $W_1=\gamma^2/2$ and $W_2=W-\gamma^2/2$,  
		along with the fact that the sum of two independent random variables has a density function if at least one of the summands has a density. Note that the statement of \cite[Theorem 2.2]{AHS20} involves the measures $\mathcal{M}_2^{\textup{disk}}(W; \ell_1, \ell_2)$ and we are using the fact that these measures are finite when $W<2+\frac{\gamma^2}{2}$ as explained after \eqref{eq:dis-formula}. 
		
		Finally, we get the result for $W\in(0,\gamma^2/2)$ by using that we know the lemma for thick quantum disks with weights in $(\gamma^2,2+\gamma^2/2)$ and that a thin quantum disk of weight $W\in(0,\gamma^2/2)$ can be described as an ordered collection of doubly-marked thick quantum disks of weight $\gamma^2-W\in (\gamma^2/2,\gamma^2)$, as done in Definition~\ref{def-thin-disk}.
	\end{proof}
	
	\subsubsection{Conformal welding of quantum disks}
	\label{sec:qs-conf-welding}
	
	In this section we review one of the main results of \cite{AHS20}, which is stated as Theorem~\ref{thm-sph-welding} below and will be a key input in the proof of Theorem~\ref{thm-baxter-density}. We first give the formal statement of the theorem and then we explain the interpretation of the theorem as a conformal welding of quantum surfaces. 
	
	\begin{figure}[ht]
		\centering
		\includegraphics[scale=0.575]{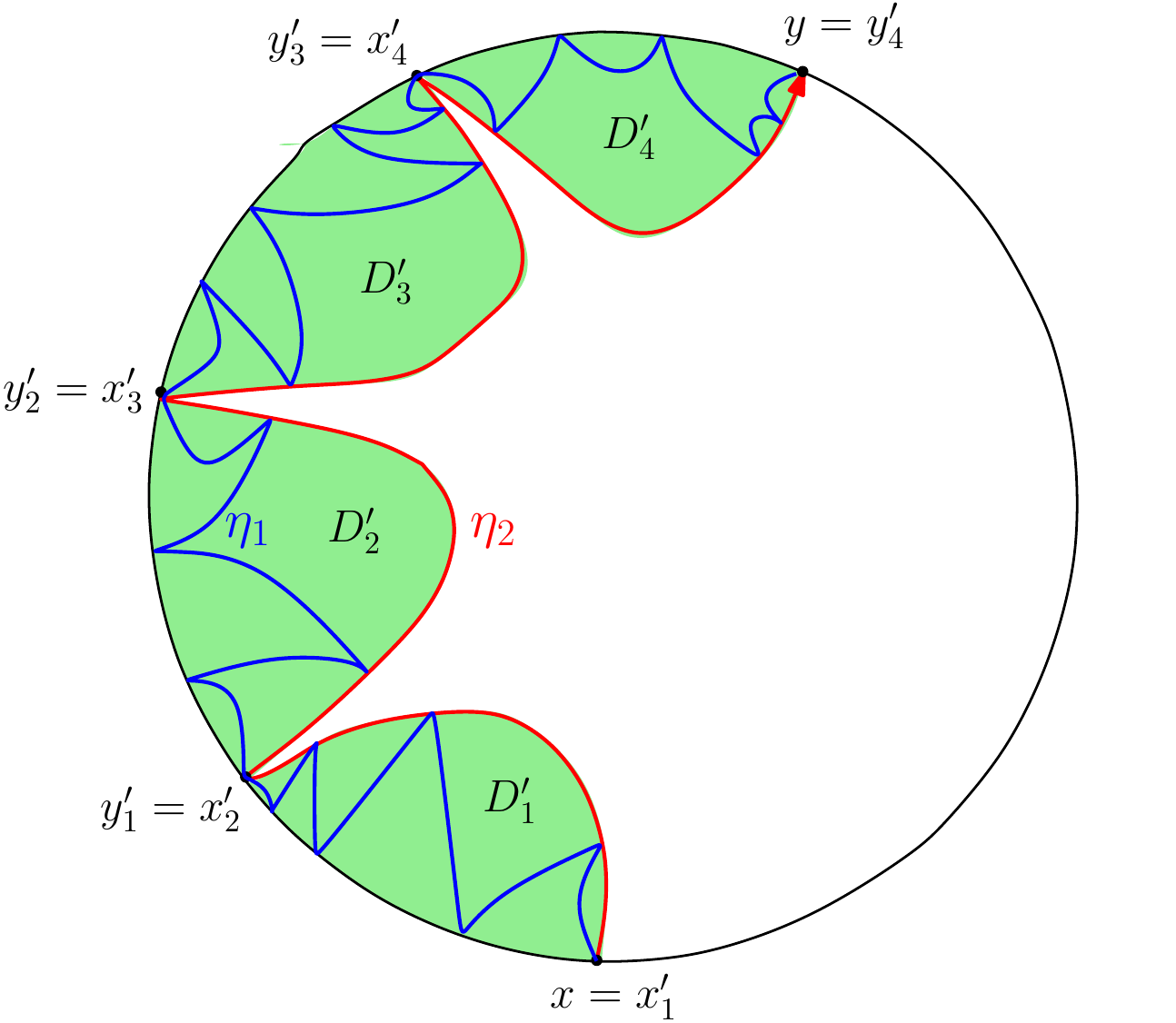} 
		\caption{An illustration of the iterative construction of the measure $\mathcal{P}^{\text{disk}}(W_1, ..., W_n)$ for $n=3$. We consider the case when $D$ is a disk with two marked boundary points $x$ and $y$ as plotted in the picture, $W_1+W_{2}<\frac{\gamma^2}{2}$, and $W_3\geq \frac{\gamma^2}{2}$. We first sample a (red) curve $\eta_{2}$ from SLE$_\kappa(W_1+W_2-2;W_3-2)$ on $(D, x, y)$ which hits the left boundary infinitely many times because $W_1+W_{2}<\frac{\gamma^2}{2}$ and does not hit the right boundary because $W_3\geq \frac{\gamma^2}{2}$.
			In each (green) domain $D'_j$ we sample an independent collection of curves from $\mathcal{P}^{\text{disk}}(W_1, ...,W_{n-1})$ starting at $x'_j$ and ending at $y'_{j}$. In our specific case when $n=3$, we sample a (blue) curve from SLE$_\kappa(W_1-2; W_2-2)$ in each (green) domain $D'_j$ starting at $x'_j$ and ending at $y'_{j}$, and then we consider the (blue) curve $\eta_1$ obtained as the concatenation of these (blue) curves.
			We note that our figure is simplified since there are actually (countably) infinitely many domains $D'_j$ cut out by the (red) $\mathrm{SLE}$ curve $\eta_{2}$. The law of the two curves $(\eta_1,\eta_2)$ is $\mathcal{P}^{\text{disk}}(W_1,W_2,W_3)$. \label{fig-conformal_welding}}
	\end{figure}

	Recall that $\mathrm{SLE}_\kappa(\rho_1;\rho_2)$ and whole-plane $\mathrm{SLE}_\kappa(\rho)$ were introduced in Section~\ref{sect:sle_ig}. In particular, recall that $\mathrm{SLE}_\kappa(\rho_1;\rho_2)$ from $a\in\partial D$ to $b\in\partial D$ in a domain $D\subset \mathbb C$ hit (countably) infinitely many times the left (resp.\ right) boundary if and only if $\rho_1<\frac\kappa 2-2$ (resp.\ $\rho_2<\frac\kappa 2-2$), and whole-plane SLE$_{\kappa}(\rho)$ curves from $0$ to $\infty$ hit themselves (countably) infinitely many times if and only if $\rho<\frac\kappa 2-2$.

	Fix $n\ge 2$, $W_1, .., W_n>0$, $\kappa=\gamma^2\in (0,4)$ and let $W=W_1+...+W_n$. Let $(D,x,y)$ be a proper simply connected domain contained in $\mathbb C$ with two points $x$ and $y$ lying on the boundary of $D$. We inductively define some probability measures $\mathcal{P}^{\text{disk}}(W_1, ..., W_{n})$ on non-crossing curves $(\eta_1,..,\eta_{n-1})$ in $D$ joining $x$ and $y$ for all $n\ge 2$. When $n=2$, define the measure $\mathcal{P}^{\text{disk}}(W_1, W_2)$ to be an SLE$_\kappa(W_1-2; W_2-2)$ in $(D, x, y)$; when $n\ge 3$, the measure $\mathcal{P}^{\text{disk}}(W_1, ..., W_n)$ on non-crossing curves $(\eta_1, ... ,\eta_{n-1})$ is defined recursively by first sampling $\eta_{n-1}$ from SLE$_\kappa(W_1+...+W_{n-1}-2;W_n-2)$ on $(D, x, y)$ and then  the tuple $(\eta_1, ... ,\eta_{n-2})$ as concatenation of samples from $\mathcal{P}^{\text{disk}}(W_1, ..., W_{n-1})$ in each connected component $(D'_i, x'_i, y'_i)$  of $D\backslash \eta_{n-1}$ lying to the left of $\eta_{n-1}$ (where $x'_i$ and $y'_i$ are the first and the last point on the boundary $\partial D'_i$ visited by $\eta_{n-1}$; see also Fig.~\ref{fig-conformal_welding}). We remark that when $W_1+...+W_{n-1}<\frac{\gamma^2}{2}$ there are (countably) infinitely many connected components $(D'_i, x'_i, y'_i)$, while when $W_1+...+W_{n-1}\geq \frac{\gamma^2}{2}$ there is only one component $(D', x', y')$.
	
	Note that using conformal invariance of SLE, the definition above can be extended to all proper simply connected domains $D$ of $\mathbb C$ with two boundary points $x$ and $y$.
	
	We also analogously define the probability measure $\mathcal{P}^{\text{sph}}(W_1, ..., W_{n})$ on  $n$-tuple of curves $(\eta_0, ..., \eta_{n-1})$ in $\wh{\mathbb{C}}$ from $0$ to $\infty$ as follows. First sample a whole-plane SLE$_{\kappa}(W_1+...+W_n-2)$ curve $\eta_0$ from 0 to $\infty$ in $\wh{\mathbb{C}}$ and then the tuple $(\eta_1, ... ,\eta_{n-1})$ as concatenation of samples from $\mathcal{P}^{\text{disk}}(W_1, ..., W_n)$ in each connected component of $\wh{\mathbb{C}}\backslash \eta_0$. We remark that when when $W_1+...+W_n<\frac{\gamma^2}{2}$ there are (countably) infinitely many connected components, while when $W_1+...+W_{n}\geq \frac{\gamma^2}{2}$ there is only one component.
	
	Given a measure $\mathcal{M}$ on quantum surfaces with $k$ marked points and a conformally invariant measure $\mathcal{P}$ on curves, let $\mathcal{M}\otimes\mathcal{P}$ be the measure on the curve-decorated surfaces with $k$ marked points constructed by first sampling a surface $(D, \psi, z_1, ..., z_k)$ from $\mathcal{M}$ and then drawing independent curves on $D$ sampled from the measure $\mathcal{P}$.  
	Note that we require that the measure $\mathcal{P}$ on curves is conformally invariant (which is satisfied in the above case of $\mathrm{SLE}_\kappa${-}type curves) so that this notation is compatible with the coordinate change \eqref{eqn-lqg-changecoord}. Sometimes the curves are required to start and/or end at given marked points of the surface; this will either be stated explicitly or be clear from the context.
	
	Now we are ready to state one of the main results of \cite{AHS20}. 
	
	\begin{theorem}[Theorem 2.4 of \cite{AHS20}]\label{thm-sph-welding}
		Fix $n\ge 1$ and $W_1, ..., W_n>0$. Let $W = W_1+...+W_n$. Then there exists a constant $c\in (0,\infty)$ depending only on $\kappa=\gamma^2{\in(0,4)}$ and $W_1, ..., W_n$, such that  
		\begin{multline*}
			\mathcal{M}_2^{\textup{sph}}(W)\otimes \mathcal{P}^{\textup{sph}}(W_1, ..., W_n) \\
			= c\int_{\bbR_{+}^{n}}\mathcal{M}_2^{\textup{disk}}(W_1;\ell_0, \ell_1)\times\mathcal{M}_2^{\textup{disk}}(W_2;\ell_1, \ell_2)\times\cdots\times\mathcal{M}_2^{\textup{disk}}(W_n;\ell_{n-1}, \ell_0)\,d\ell_0\,...\,d\ell_{n-1}.
		\end{multline*}	
	\end{theorem}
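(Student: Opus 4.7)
The plan is to prove the identity by induction on $n$, using two foundational conformal welding results as inputs. The first is the two-disk conformal welding statement: welding two doubly-marked quantum disks of weights $W_1$ and $W_2$ along a common boundary arc of matching quantum length produces a doubly-marked quantum disk of weight $W_1+W_2$ decorated by an $\mathrm{SLE}_\kappa(W_1-2;W_2-2)$ interface; this is the $n=2$ disk analog of our theorem, and follows from Sheffield's quantum zipper together with its extensions in \cite{DMS14} (and is stated as Theorem~2.2 of \cite{AHS20}). The second is the $n=1$ sphere self-welding: identifying the left and right boundary arcs of a single weight $W$ quantum disk with matching arc lengths $\ell_0$ produces a weight $W$ quantum sphere marked by a whole-plane $\mathrm{SLE}_\kappa(W-2)$ curve from $0$ to $\infty$.

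With these inputs I would first establish, by induction on $n$, the disk analog of the theorem:
\begin{equation*}
\mathcal{M}_2^{\textup{disk}}(W;\ell_0,\ell_n)\otimes \mathcal{P}^{\textup{disk}}(W_1,\ldots,W_n)
= c \int_{\bbR_+^{n-1}} \mathcal{M}_2^{\textup{disk}}(W_1;\ell_0,\ell_1)\times\cdots\times \mathcal{M}_2^{\textup{disk}}(W_n;\ell_{n-1},\ell_n)\,d\ell_1\cdots d\ell_{n-1}.
\end{equation*}
The inductive step samples the outermost curve $\eta_{n-1}$ first, which by the recursive definition is an $\mathrm{SLE}_\kappa(W_1+\cdots+W_{n-1}-2;W_n-2)$; applying the two-disk welding splits off the weight $W_n$ disk on its right, and applying the inductive hypothesis inside the remaining weight $W_1+\cdots+W_{n-1}$ disk decomposes $(\eta_1,\ldots,\eta_{n-2})$ into the remaining $n-1$ factors. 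To pass to the sphere case for $n\ge 2$, I would sample $\eta_0$ from $\mathcal{P}^{\textup{sph}}(W_1,\ldots,W_n)$ first; by the $n=1$ sphere input, the pair $(\mathrm{sphere},\eta_0)$ is described by the self-welding of a weight $W$ quantum disk with matching boundary arc lengths $\ell_0$, disintegrated over $\ell_0$. Conditionally on this disk, the remaining curves $(\eta_1,\ldots,\eta_{n-1})$ are distributed as $\mathcal{P}^{\textup{disk}}(W_1,\ldots,W_n)$, so applying the already-established disk version with the cyclic identification $\ell_n=\ell_0$ and integrating over $\ell_0$ yields exactly the claimed formula (the $n=1$ sphere case is the self-welding input itself).

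The main obstacle is establishing the $n=1$ sphere self-welding step, since passing from a quantum disk to a quantum sphere by gluing the boundary to itself is more delicate than the two-disk welding: the welded interface is a single arc from $0$ to $\infty$ on the sphere whose two sides both come from the same disk interior. This step can be carried out either via the Liouville CFT embedding of the quantum sphere as the uniform embedding of $\mathrm{LF}_{\mathbb{C}}^{(\beta,0),(\beta,\infty)}$ from \cite{AHS21}, compared with the analogous embedding for the quantum disk, or via a limiting argument starting from the two-disk welding. A further technical wrinkle is that for small weights $W_i\in(0,\gamma^2/2)$ the relevant quantum disks are \emph{thin} (i.e., beaded) and the corresponding $\mathrm{SLE}_\kappa$-type interfaces self-intersect, so the welding must be interpreted at the level of beaded quantum surfaces; this introduces additional bookkeeping when tracking the disintegration over boundary lengths in the sense of~\eqref{eq:dis-formula} and when verifying that the constant $c$ is finite.
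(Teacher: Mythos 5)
This statement is imported verbatim from \cite{AHS20} (their Theorem 2.4); the present paper gives no proof of it, so there is no internal argument to compare yours against. Judged on its own terms, your outline follows the same broad architecture as the actual proof in \cite{AHS20}: the multi-curve disk identity is obtained by induction from the two-weight disk welding (their Theorem 2.2, with the recursive definition of $\mathcal{P}^{\textup{disk}}$ matching the order in which you peel off $\eta_{n-1}$), and the sphere statement is obtained by cutting along $\eta_0$, i.e.\ by the self-welding of a single weight-$W$ disk along its two boundary arcs, followed by the disk identity with the cyclic identification $\ell_n=\ell_0$.

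The weight of the argument, however, sits exactly in the step you defer: the $n=1$ self-welding producing $\mathcal{M}_2^{\textup{sph}}(W)$ decorated by an independent whole-plane $\mathrm{SLE}_\kappa(W-2)$ is not an off-the-shelf input. In \cite{AHS20} it is extracted from the cone/wedge weldings of \cite{DMS14} (a whole-plane $\mathrm{SLE}_\kappa(W-2)$ cuts a weight-$W$ quantum cone into a weight-$W$ quantum wedge) by adding marked points sampled from the quantum measures and disintegrating over quantum areas and boundary lengths in the sense of \eqref{eq:dis-formula}, and this passage from infinite, scale-invariant surfaces to two-pointed spheres and fixed-boundary-length disks is where the constant $c$ and most of the technical effort arise; a complete proof along your lines must carry this out rather than cite it. Your alternative of invoking the Liouville CFT identification of the sphere is mathematically sensible today but anachronistic as a route to a result of \cite{AHS20}, since \cite{AHS21} builds on these weldings. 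Finally, in the inductive step you should verify that, conditionally on $\eta_{n-1}$, the surfaces cut out on its left side form an independent sample of the weight $W_1+\cdots+W_{n-1}$ disk measure with the correct length disintegration, including the thin regime $W_1+\cdots+W_{n-1}<\gamma^2/2$ where there are countably many beads and the curves $(\eta_1,\dots,\eta_{n-2})$ are concatenations over beads; your closing remark correctly identifies this bookkeeping, but it is part of the proof, not a side issue.
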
	
	
	We refer to this type of results as \emph{conformal welding of quantum surfaces}.
	We now give a more informal interpretation of the above result in order to help the reader to develop some intuition on the statement of Theorem~\ref{thm-sph-welding}.  The right-hand side of the indented equation in the theorem can be interpreted as the ``conformal welding'' of the $n$ quantum disks sampled from the measures $\mathcal{M}_2^{\textup{disk}}(W_j;\ell_{j-1}, \ell_j)$ into a quantum sphere with law $\mathcal{M}_2^{\textup{sph}}(W)$ decorated with $n$ SLE$_\kappa$-type curves with joint law $\mathcal{P}^{\textup{sph}}(W_1, ..., W_n)$. More precisely, one can first conformally weld the first pair of quantum disks sampled from $\mathcal{M}_2^{\textup{disk}}(W_1;\ell_0, \ell_1)\times\mathcal{M}_2^{\textup{disk}}(W_2;\ell_1, \ell_2)$ along their length $\ell_1$ boundary arcs, yielding a new quantum disk with two marked boundary points, a SLE$_\kappa$-type curve joining them, and two boundary arcs of quantum lengths $\ell_0$ and $\ell_2$. Then one can iterate this procedure by conformally welding this new curve-decorated quantum disk with the next quantum disks sampled from $\mathcal{M}_2^{\textup{disk}}(W_j;\ell_{j-1}, \ell_j)$ for all $j=3,\dots,n$ ($\ell_n=\ell_0$), obtaining in the end another quantum disk with two marked boundary points, $n-1$ SLE$_\kappa$-type curves joining them, and two boundary arcs of equal quantum lengths $\ell_0$. Welding together the left and the right boundary of this final quantum disk, yield to a quantum sphere decorated by $n$ SLE$_\kappa$-type curves. Theorem~\ref{thm-sph-welding} states that the law of this curve-decorated quantum-sphere is $\mathcal{M}_2^{\textup{sph}}(W)\otimes \mathcal{P}^{\textup{sph}}(W_1, ..., W_n)$. We refer the curious reader to the original paper \cite{AHS20} for further details.
	
	\subsection{Rerooting invariance of quantum spheres and its consequences on the skew Brownian permuton}\label{sect:res_prop}
	
	In this section we review the rerooting invariance of the marked points on a unit-area quantum sphere and give an alternative expression for the intensity measure $\mathbb{E}	[\mu_{\rho,q}]$ of the skew Brownian permuton.
	The following result is \cite[Proposition A.13]{DMS14} and  is the base point of our arguments.
	
	\begin{proposition}[Rerooting invariance of quantum spheres]\label{prop-qs-resampling}
		Let $\gamma\in(0,2)$. Suppose $(\wh{\mathbb{C}}, h, 0, \infty)$ is a unit-area quantum sphere of weight $4-\gamma^2$. Then conditional on the surface $(\wh{\mathbb{C}}, h)$, the points which corresponds to $0$ and $\infty$ are distributed independently and uniformly from the quantum area measure $\mu_h$. That is, if $x,y$ in $\wh{\mathbb{C}}$ are chosen uniformly from $\mu_h$ and $\varphi:\wh{\mathbb{C}}\to \wh{\mathbb{C}}$ is a conformal map with $\varphi(x) = 0$ and $\varphi(y)=  \infty$, then $(\wh{\mathbb{C}},h\circ\varphi^{-1}+Q\log|(\varphi^{-1})'|,0,\infty)$ has the same law as $(\wh{\mathbb{C}},h,0,\infty)$ when viewed  as doubly marked quantum surfaces.
	\end{proposition}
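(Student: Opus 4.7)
The plan is to reduce the rerooting invariance for the two-pointed quantum sphere of weight $W = 4-\gamma^2$ to the symmetry of a three-pointed version carrying three ``typical'' $\gamma$-insertions, and then integrate out one of them.

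First, I would unpack Definition~\ref{def-quantum-sphere} at $W = 4-\gamma^2$: this gives $\alpha = Q - W/(2\gamma) = \gamma$, so in any embedding the field $\psi$ has a $\gamma$-log singularity at both marked points. The value $\alpha = \gamma$ is exactly the threshold making a marked point measure-theoretically indistinguishable from one sampled from $\mu_\psi$. This is the content of the circle-average identity
\begin{equation*}
\mu_h(dz) \;=\; \lim_{\varepsilon \to 0} \varepsilon^{\gamma^2/2}\, e^{\gamma h_\varepsilon(z)}\, dz,
\end{equation*}
combined with a Cameron--Martin / Girsanov argument: reweighting the Gaussian field by $e^{\gamma h_\varepsilon(z)}$ shifts its mean by $\gamma$ times a regularized Green kernel, which in the limit $\varepsilon \to 0$ is precisely a $\gamma$-log singularity at $z$.

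Second, I would introduce the three-pointed measure
\begin{equation*}
\widetilde{\mathcal{M}}_3^{\textup{sph}}\bigl(d(\wh{\mathbb{C}}, h, x_1, x_2, x_3)\bigr) \;:=\; \mu_h(dx_3)\,\otimes\, \mathcal{M}_2^{\textup{sph}}(4-\gamma^2)\bigl(d(\wh{\mathbb{C}}, h, x_1, x_2)\bigr),
\end{equation*}
and prove that $\widetilde{\mathcal{M}}_3^{\textup{sph}}$ is invariant under permutations of $(x_1, x_2, x_3)$. By the first step, this measure is the law of a field with three symmetric $\gamma$-log insertions at $(x_1, x_2, x_3)$ (pulled back to the $\wh{\mathbb{C}}$-embedding). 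Invariance under the three-element symmetric group then follows from the coordinate-change formula \eqref{eqn-lqg-changecoord}: for any M\"obius transformation $\varphi:\wh{\mathbb{C}}\to\wh{\mathbb{C}}$ permuting a chosen triple of points, the transformation rule absorbs $\varphi$ and preserves the law of the triply-insertion-marked quantum surface.

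Third, using the disintegration \eqref{eqn-disintegrate-area}, I would restrict $\widetilde{\mathcal{M}}_3^{\textup{sph}}$ to the event $\{\mu_h(\wh{\mathbb{C}}) = 1\}$. On this event $\mu_h$ is a probability measure, and the symmetry of $\widetilde{\mathcal{M}}_3^{\textup{sph}}$ says that, conditional on the underlying unmarked unit-area surface, the triple consisting of the two original marked points $(0, \infty)$ and an independent $\mu_h^\#$-sample is exchangeable. Exchangeability of the triple forces the two original marked points to be i.i.d.\ samples from $\mu_h$ given the surface, which is the statement of Proposition~\ref{prop-qs-resampling}; the reformulation in terms of the conformal map $\varphi$ is then automatic from \eqref{eqn-lqg-changecoord}.

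The main obstacle is the rigorous Girsanov identification of the first step: the measure $\mathcal{M}_2^{\textup{sph}}(4-\gamma^2)$ is infinite, and the unit-area event has zero mass, so both the insertion computation and the $\varepsilon \to 0$ passage must be carried out on the finite fibers $\mathcal{M}_2^{\textup{sph}}(4-\gamma^2;a)$ of fixed quantum area $a$, where the relevant Gaussian multiplicative chaos convergence and dominated-convergence estimates are standard but delicate. A cleaner modern bypass, whenever the reader is willing to accept it, is to identify the quantum sphere with the Liouville field $\mathrm{LF}_{\wh{\mathbb{C}}}^{(\gamma, 0),(\gamma, \infty)}$ as in~\cite{AHS21}, so that the required three-point symmetry becomes the well-established permutation invariance of Liouville correlation functions at equal insertion weights.
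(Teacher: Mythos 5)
The paper does not actually prove Proposition~\ref{prop-qs-resampling}: it is quoted as \cite[Proposition A.13]{DMS14} and used as an external input, so there is no in-paper argument to compare yours against. Your outline is essentially a reconstruction of the standard proof behind that citation: at weight $4-\gamma^2$ one has $\alpha=\gamma$, weighting by a point sampled from $\mu_h$ is equivalent (via Girsanov/GMC circle-average arguments) to adding a $\gamma$-log insertion, the resulting triply marked measure is symmetric in its three points, and exchangeability on the fixed-area slices of \eqref{eqn-disintegrate-area} yields the statement. The final step is also sound: since the third point is by construction conditionally independent of the first two given the surface and conditionally distributed as $\mu_h^\#$, exchangeability of the triple does force the two original marked points to be conditionally i.i.d.\ from $\mu_h^\#$.

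One caveat: the claim that the $S_3$-invariance of $\widetilde{\mathcal{M}}_3^{\textup{sph}}$ ``follows from the coordinate-change formula \eqref{eqn-lqg-changecoord}'' is not accurate as stated. The coordinate change only prescribes how a given marked surface is re-embedded; it cannot by itself show that the law of the field is unchanged when the roles of the three insertion points are permuted. The substantive input is the explicit description of the law obtained after weighting $\mathcal{M}_2^{\textup{sph}}(4-\gamma^2)$ by $\mu_h(dx_3)$ --- a Girsanov computation carried out on the finite fibers $\mathcal{M}_2^{\textup{sph}}(4-\gamma^2;a)$, or equivalently the identification of the weight-$(4-\gamma^2)$ sphere with the Liouville field with $\gamma$-insertions and the permutation symmetry of the latter, as in \cite{ahs-sphere,DKRV16,AHS21}. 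You correctly flag this as the main obstacle and propose to outsource it; once that step is supplied, your argument is precisely the content of \cite[Proposition A.13]{DMS14}, so in effect your proof rests on the same external input that the paper chooses to cite directly rather than reprove.
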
 
	In particular, if we condition on $y = \infty$ in the statement of Proposition~\ref{prop-qs-resampling} and resample $x$ according to the quantum area measure $\mu_h$, then the quantum surface $(\wh{\mathbb{C}}, h, x, \infty)$ has the same law as $(\wh{\mathbb{C}}, h, 0, \infty)$. 
	
	Before proving the main result of this section, we introduce some more notation. Let $\wh{h}$ be a whole-plane GFF (viewed modulo a global additive integer multiple of $2\pi \chi$). For $w\in\wh{\mathbb{C}}$, we denote by $\eta_{\op{E}}^{w},\eta_{\theta}^{w},\eta_{\op{W}}^{w}, \eta_{\theta+{\pi}}^{w}$ the flow lines of $\wh{h}$ issued from $w$ with corresponding angles $-\frac{\pi}{2},\theta,\frac{\pi}2, \theta+\pi$ (defined at the end of Section \ref{sect:sle_ig}). 
	Recall that from \cite[Theorem 1.7]{MS17} flow lines from the same point with different angles might bounce off each other but can never cross or merge.
	We denote by $A_1^{w}, A_2^{w}, A_3^{w}, A_4^{w}$ the areas of the four regions cut out by the four flow lines $\eta_{\op{E}}^{w},\eta_{\theta}^{w},\eta_{\op{W}}^{w}, \eta_{\theta+{\pi}}^{w}$, labeled as in Figure~\ref{fig-flowlines}. When $w=0$, we simply write $\eta_{\op{E}},\eta_{\theta},\eta_{\op{W}}, \eta_{\theta+{\pi}}$ for $\eta_{\op{E}}^{0},\eta_{\theta}^{0},\eta_{\op{W}}^{0}, \eta_{\theta+{\pi}}^{0}$ and $A_1, A_2, A_3, A_4$ for $A_1^{0}, A_2^{0}, A_3^{0}, A_4^{0}$. In this case, it can be argued using the imaginary geometry coupling in 
	\cite[Theorem 1.1]{MS16a} and \cite[Theorem 1.1]{MS17} {that} the joint law of the four flow lines $\eta_E,\eta_{\theta},\eta_W, \eta_{\theta+{\pi}}$ can be viewed as $\mathcal{P}^{\text{sph}}(W_1, W_2, W_3, W_4)$ with $(W_1,W_2,W_3,W_4)$ determined by 	
	\begin{equation}\label{eqn-weight}
		W_1 = W_3 =  2 - \frac{\gamma^2}{2}-\frac{4-\gamma^2}{2\pi}(\theta+\pi/2);\qquad W_2 = W_4 =\frac{4-\gamma^2}{2\pi}(\theta+\pi/2).
	\end{equation} 
	See~\cite[Tables 1.1 and 1.2]{DMS14} for the complete correspondence between imaginary geometry angles and quantum surface weights.
	
	\begin{figure}[ht]
		\centering
		\includegraphics[scale=0.33]{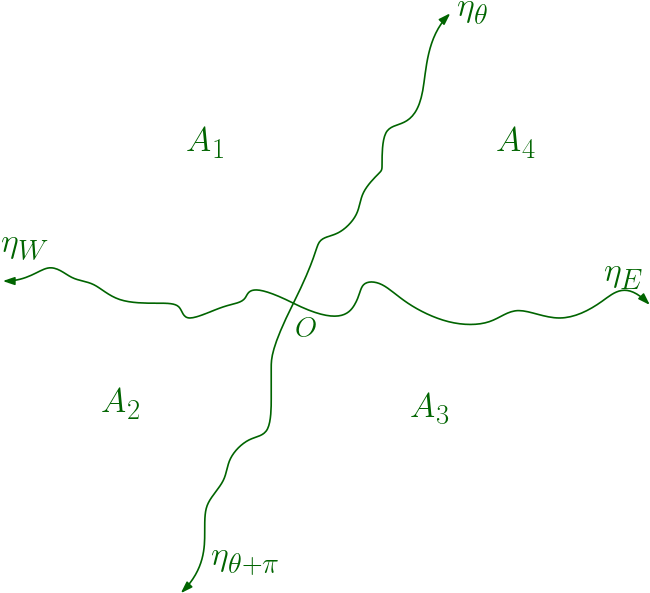}
		\caption{The flow lines $\eta_{\op{E}},\eta_{\theta},\eta_{\op{W}}, \eta_{\theta+{\pi}}$  of $\wh{h}$ with corresponding angles $-\frac{\pi}{2}$, $\theta$, $\frac{\pi}{2}$, $\theta+\pi$ issued from 0 for $\theta\in(-\frac{\pi}{2},\frac{\pi}{2})$.
			They cut the quantum sphere $(\wh{\mathbb{C}}, h, 0, \infty)$ into four quantum disks with areas $A_1, A_2, A_3, A_4$ as labeled. These four quantum disks (which can be either thin or thick quantum disks depending on the values of the parameters $\gamma$ and $\theta$) are independent conditioned on having the same boundary arc quantum lengths (from the welding) and total area 1, i.e.\ $A_1+A_2+A_3+A_4=1$; see Section~\ref{sec:density-a} for further details. We also highlight that the counterflow line $\eta'_0$ first visits the regions with area $A_2$ and $A_3$ and then the regions with area $A_1$ and $A_4$, while  the counterflow line $\eta'_{\theta-\frac\pi 2}$ first visits the regions with area $A_1$ and $A_2$ and then the regions with area $A_3$ and $A_4$. Moreover, the flow lines $\eta_{\op{W}}$ and $ \eta_{\op{E}}$ (resp.\ $\eta_{\theta}$ and $\eta_{\theta+{\pi}}$) are a.s.\ left and right boundaries of $\eta'$ (resp.\ $\eta'_{\theta-\frac\pi 2}$) stopped upon hitting $0$, as explained in Section~\ref{sect:sle_ig}.\label{fig-flowlines}}
	\end{figure} 
	
	\begin{proposition}\label{prop-baxter-disk}
		Let $\gamma\in(0,2)$. Let $(\wh{\mathbb{C}}, h, 0, \infty)$ be a unit-area quantum sphere of weight $4-\gamma^2$, and let $\theta\in [-\frac \pi 2,\frac \pi 2]$. Let $\wh{h}$ be a whole-plane GFF (viewed modulo a global additive integer multiple of $2\pi \chi$) independent of $h$ and consider the corresponding four areas $A_1,A_2,A_3,A_4$ defined above (see also Figure~\ref{fig-flowlines}). Set $\rho\in(-1,1)$ and $q\in[0,1]$ such that $\rho=-\cos(\pi\gamma^2/4)$ and $q = q_\gamma(\theta)$ and consider the skew Brownian permuton $\mu_{\rho,q}$. Then for all rectangles $[x_1, x_2]\times [y_1, y_2]\subset [0,1]^2$,
		\begin{multline*}
			\bbE [\mu_{\rho,q}]\Big([x_1, x_2]\times [y_1, y_2]\Big)= \\
			\mathcal{M}_2^{\textup{sph}}(4-\gamma^2; 1)^\#\otimes \mathcal{P}^{\textup{sph}}(W_1, W_2, W_3, W_4) \Big(A_2+A_3\in [x_1, x_2], A_1+A_2\in [y_1, y_2]\Big),
		\end{multline*}
		where $W_1, W_2, W_3, W_4$ are given in \eqref{eqn-weight}.
	\end{proposition}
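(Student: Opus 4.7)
The plan is to express $\bbE[\mu_{\rho,q}]$ as a probability involving a point sampled from $\mu_h$, apply the rerooting invariance of the unit-area quantum sphere to replace that sampled point by the marked point $0$, and then identify the resulting hitting times with the sums $A_2+A_3$ and $A_1+A_2$ of the areas depicted in Figure~\ref{fig-flowlines}.

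Starting from Proposition~\ref{prop:permuton-q-area}, for any rectangle $[x_1,x_2]\times[y_1,y_2]\subset[0,1]^2$ I would write
$$\bbE[\mu_{\rho,q}([x_1,x_2]\times[y_1,y_2])] = \bbE\big[\mu_h\big(\eta'_0([x_1,x_2])\cap\eta'_{\theta-\frac{\pi}{2}}([y_1,y_2])\big)\big].$$
Since for $\mu_h$-a.e.\ $z$ both space-filling curves visit $z$ exactly once, Fubini's theorem rewrites the right-hand side as $\bbP\big(t_0(Z)\in[x_1,x_2],\,t_{\theta}(Z)\in[y_1,y_2]\big)$, where $t_0(z)$ and $t_{\theta}(z)$ denote the times at which $\eta'_0$ and $\eta'_{\theta-\frac{\pi}{2}}$ hit $z$ and $Z$ is sampled from the probability measure $\mu_h$ conditionally on $(h,\wh h)$.

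Next I would invoke Proposition~\ref{prop-qs-resampling}: if $\varphi:\wh{\bbC}\to\wh{\bbC}$ is the conformal automorphism with $\varphi(Z)=0$ and $\varphi(\infty)=\infty$, then the rerooted sphere $(\wh{\bbC},h\circ\varphi^{-1}+Q\log|(\varphi^{-1})'|,0,\infty)$ has the same law as the original unit-area weight-$(4-\gamma^2)$ quantum sphere. Because $\wh h$ is independent of $h$ and the whole-plane GFF is conformally invariant, the pushforward of $\wh h$ by $\varphi$ is still an independent whole-plane GFF, and its space-filling SLE$_{\kappa'}$ counterflow lines of angles $0$ and $\theta-\frac{\pi}{2}$ are the pushforwards of $\eta'_0$ and $\eta'_{\theta-\frac{\pi}{2}}$; the $\mu_h$-area parametrization is preserved by the coordinate change \eqref{eqn-lqg-changecoord}. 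Hence the probability above equals $\bbP(t_0(0)\in[x_1,x_2],t_{\theta}(0)\in[y_1,y_2])$ computed in the setup of the proposition. By the imaginary geometry description in Section~\ref{sect:sle_ig}, the outer boundaries of $\eta'_0$ stopped upon hitting $0$ are $\eta_{\op{W}}$ and $\eta_{\op{E}}$, while those of $\eta'_{\theta-\frac{\pi}{2}}$ stopped upon hitting $0$ are $\eta_\theta$ and $\eta_{\theta+\pi}$. As indicated in Figure~\ref{fig-flowlines}, $\eta'_0$ sweeps the regions of areas $A_2,A_3$ before $0$ and those of areas $A_1,A_4$ afterwards, and $\eta'_{\theta-\frac{\pi}{2}}$ sweeps $A_1,A_2$ before $0$ and $A_3,A_4$ afterwards; since both curves are parametrized by $\mu_h$-area this yields $t_0(0)=A_2+A_3$ and $t_{\theta}(0)=A_1+A_2$.

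Finally, by the imaginary geometry coupling of \cite{MS16a,MS17} together with the angle/weight dictionary of \cite[Tables 1.1 and 1.2]{DMS14}, the joint law of $(\eta_{\op{E}},\eta_\theta,\eta_{\op{W}},\eta_{\theta+\pi})$ is $\mathcal{P}^{\textup{sph}}(W_1,W_2,W_3,W_4)$ with weights given by \eqref{eqn-weight}, which sum to $4-\gamma^2$ as required for the welding to be compatible with the sphere weight. Combining this with the independent law $\mathcal{M}_2^{\textup{sph}}(4-\gamma^2;1)^{\#}$ of the unit-area quantum sphere gives the claimed identity. The main point I expect to require careful bookkeeping is the rerooting step itself: one must verify that under $\varphi$ the quantum sphere field, the independent whole-plane GFF, the two space-filling counterflow lines, and their $\mu_h$-area parametrizations all transport consistently, so that $t_0(Z)$ and $t_{\theta}(Z)$ in the original embedding truly become $t_0(0)$ and $t_{\theta}(0)$ in the rerooted embedding centered at the $\mu_h$-sample.
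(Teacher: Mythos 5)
Your proposal is correct and follows essentially the same route as the paper: express $\bbE[\mu_{\rho,q}]$ via Proposition~\ref{prop:permuton-q-area} as the probability that a $\mu_h$-sampled point lies in $\eta'_0([x_1,x_2])\cap\eta'_{\theta-\frac{\pi}{2}}([y_1,y_2])$, identify this event with $\{A_2^{\mathbf{w}}+A_3^{\mathbf{w}}\in[x_1,x_2],\,A_1^{\mathbf{w}}+A_2^{\mathbf{w}}\in[y_1,y_2]\}$ using the flow-line boundaries of the counterflow lines, and then apply the rerooting invariance of Proposition~\ref{prop-qs-resampling} together with the angle/weight dictionary to land on $\mathcal{M}_2^{\textup{sph}}(4-\gamma^2;1)^{\#}\otimes\mathcal{P}^{\textup{sph}}(W_1,W_2,W_3,W_4)$. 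The only cosmetic difference is that the paper performs the rerooting by the simple translation $z\mapsto z-\mathbf{w}$ (which makes the transport of the independent whole-plane GFF and its flow lines immediate, avoiding the $-\chi\arg\varphi'$ bookkeeping you flag), and it identifies the areas before rerooting rather than after.
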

	
	\begin{proof}
		Given the unit-area quantum sphere $(\wh{\mathbb{C}}, h, 0, \infty)$, we uniformly sample a point $\mathbf{w}$ according to the $\gamma$-LQG area measure $\mu_h$. Consider the flow lines $\eta_{\op{E}}^{\mathbf{w}}, \eta_{\theta}^{\mathbf{w}}, \eta_{\op{W}}^{\mathbf{w}}, \eta_{\theta+\pi}^{\mathbf{w}}$ of the whole-plane GFF $\wh{h}$ starting from $\mathbf{w}$ and going to infinity. Also assume that the skew Brownian permuton $\mu_{\rho,q}$ is coupled with $(\wh{\mathbb{C}}, h, 0, \infty)$ and  $\wh{h}$ under the same probability measure $\mathbb{P}$ as in Proposition~\ref{prop:permuton-q-area}. On the one hand, by Proposition~\ref{prop:permuton-q-area}, $\bbE \mu_{\rho,q}([x_1, x_2]\times [y_1, y_2])$ is the probability of $\mathbf{w}$ falling into the random set $\eta'([x_1, x_2])\cap \eta'_{\theta-\frac\pi 2}([y_1, y_2])$. On the other hand,  $\mathbf{w}$ is a.s.\ not a double point for neither $\eta'$ nor $\eta'_{\theta-\frac\pi 2}$; and by the definition of space-filling SLE curves given at the end of Section \ref{sect:sle_ig}, the flow lines $\eta_{\op{W}}^{\mathbf{w}}$ and $ \eta_{\op{E}}^{\mathbf{w}}$ (resp.\ $\eta_{\theta}^{\mathbf{w}}$ and $\eta_{\theta+{\pi}}^{\mathbf{w}}$) are a.s.\ left and right boundaries of $\eta'$ (resp.\ $\eta'_{\theta-\frac\pi 2}$) stopped upon hitting $\mathbf{w}$. From this and the fact that we are parametrizing the curves $\eta'$ and $\eta'_{\theta-\frac\pi 2}$ using $\mu_h$, we see that a.s.\ $\mathbf{w}$ falls into $\eta'([x_1, x_2])\cap \eta'_{\theta-\frac\pi 2}([y_1, y_2])$ if and only if   $A_1^{\mathbf{w}}+A_2^{\mathbf{w}}\in [y_1, y_2]$ and $A_2^{\mathbf{w}}+A_3^{\mathbf{w}}\in [x_1, x_2]$, which implies that
		\[
		\bbE [\mu_{\rho,q}]\Big([x_1, x_2]\times [y_1, y_2]\Big) = {\bbP} \Big(A_2^{\mathbf{w}}+A_3^{\mathbf{w}}\in [x_1, x_2], A_1^{\mathbf{w}}+A_2^{\mathbf{w}}\in [y_1, y_2]\Big).
		\]
		Now we treat $\mathbf{w}$ and $\infty$ as the two marked points of the quantum sphere, and consider the shift $z\mapsto z-\mathbf{w}$. Let  $(\wh{\mathbb{C}}, h^\mathbf{w}, 0, \infty)$ be the corresponding doubly marked surface, where $h^{\mathbf{w}} = h(\cdot+\mathbf{w})$. We also set $\wh{h}^\mathbf{w} := \wh{h}(\cdot+\mathbf{w})$. It is clear that given $\mathbf{w}$ and the quantum sphere  $(\widehat{{\mathbb C}}, h, 0, \infty)$, the field $\wh{h}^\mathbf{w}$ has the law as a whole-plane GFF (modulo a global additive integer multiple of $2\pi\chi$),  
		and the four flow lines $\eta_{\op{E}}^{\mathbf{w}}, \eta_{\theta}^{\mathbf{w}}, \eta_{\op{W}}^{\mathbf{w}}, \eta_{\theta+\pi}^{\mathbf{w}}$ are mapped by the shift $z\mapsto z-\mathbf{w}$ to corresponding flow lines of $\wh{h}^\mathbf{w}$ issued from 0. Moreover, by the rerooting invariance stated in Proposition~\ref{prop-qs-resampling}, $(\wh{\mathbb{C}}, h^\mathbf{w}, 0, \infty)$ has the same law as the unit-area quantum sphere $(\wh{\mathbb{C}}, h, 0, \infty)$ and it is independent of the whole-plane GFF $\wh{h}^\mathbf{w}$. Since, as discussed above, the joint law of the four flow lines is $\mathcal{P}^{\text{sph}}(W_1, W_2, W_3, W_4)$ where $W_1, W_2, W_3, W_4$ are given in \eqref{eqn-weight}, and the law of a unit-area quantum sphere is $\mathcal{M}_2^{\textup{sph}}(4-\gamma^2; 1)^\#$ by Definition~\ref{def-quantum-sphere} and \eqref{eqn-disintegrate-area}, it follows that 
		\begin{multline*}
			{\bbP} \Big(A_2^{\mathbf{w}}+A_3^{\mathbf{w}}\in [x_1, x_2], A_1^{\mathbf{w}}+A_2^{\mathbf{w}}\in [y_1, y_2]\Big) \\
			= \mathcal{M}_2^{\textup{sph}}(4-\gamma^2; 1)^\#\otimes \mathcal{P}^{\textup{sph}}(W_1, W_2, W_3, W_4) \left(A_2+A_3\in [x_1, x_2], A_1+A_2\in [y_1, y_2]\right),
		\end{multline*}
		which justifies the proposition.
	\end{proof}
	
	\subsection{Density of the Baxter permuton}\label{sec:density-a}
	In this section we conclude the proof of Theorem~\ref{thm-baxter-density}. First we derive in Section~\ref{sec:density-skew-Bp} a formula for the density of the skew Brownian permuton which holds for all $\rho\in(-1,1)$ and $q\in(0,1)$ (Theorem \ref{thm-baxter-density-general}), and in Section~\ref{sec:density-Baxter} we simplify this formula in the special case of the Baxter permuton. Finally, in Section~\ref{sect:rel_tetra} we sketch how the formula can be made yet more explicit for the Baxter permuton via known formulas for the volume of spherical tetrahedra.
	
	\subsubsection{Density of the skew Brownian permuton in terms of quantum disks}
	\label{sec:density-skew-Bp}
	Recall from Lemma~\ref{prop-density} that for any $W\in(0,2+\frac{\gamma^2}{2})$ and $\ell_1,\ell_2>0$ the quantum area $A$ of a sample from  $\mathcal{M}_2^{\textup{disk}}(W; \ell_1, \ell_2)$ is absolutely continuous with respect to Lebesgue measure. 
	Let $p_W(a, \ell_1, \ell_2)$ denote the density of $A$, that is, for any non-negative measurable function $g$, we have
	\begin{equation}\label{eqn-def-area-density}
		\int g(\mu_{\psi}(D))\,d\mathcal{M}_2^{\textup{disk}}(W; \ell_1, \ell_2) = \int_0^\infty g(a)p_W(a, \ell_1, \ell_2)\,da,
	\end{equation}
	where $(D,\psi,x,y)$ is an embedding of a sample from $\mathcal{M}_2^{\textup{disk}}(W; \ell_1, \ell_2)$ (recall the definition of embedding from Section~\ref{sect:quan_surf}).
	The aim of this section is to prove the following.
	\begin{theorem}\label{thm-baxter-density-general}
		Consider the skew Brownian permuton $\mu_{\rho,q}$ of parameters $\rho\in(-1,1)$ and $q\in(0,1)$. Let $\gamma\in(0,2)$ and $\theta\in[-\frac \pi 2,\frac \pi 2]$ be defined by $\rho=-\cos(\pi\gamma^2/4)$ and $\theta = \theta_\gamma(q)$. Set $(W_1,W_2,W_3,W_4)$ as in \eqref{eqn-weight} and denote by $p_i(a; \ell_1, \ell_2):=p_{W_i}(a; \ell_1, \ell_2)$ the density of the quantum area of a sample from  $\mathcal{M}_2^{\textup{disk}}(W_i; \ell_1, \ell_2)$ in the sense of \eqref{eqn-def-area-density}.
		Then the intensity measure $\bbE [\mu_{\rho,q}]$ is absolutely continuous with respect to the Lebesgue measure on $[0,1]^2$ and has the following density function 
		\begin{equation*}
			{(x,y)\mapsto}c\int_{\max\{0, x+y-1\}}^{\min\{x, y\}}\int_{\bbR_{+}^4}p_1(y-z, \ell_1, \ell_2)p_2(z, \ell_2, \ell_3)p_3(x-z, \ell_3, \ell_4)p_4(1+z-x-y, \ell_4, \ell_1)\,d\ell_1d\ell_2d\ell_3d\ell_4\,dz,
		\end{equation*}
		where $c$ is a normalizing constant. 
	\end{theorem}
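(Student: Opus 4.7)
The plan is to build on Proposition~\ref{prop-baxter-disk} and apply the conformal welding of Theorem~\ref{thm-sph-welding}. By Proposition~\ref{prop-baxter-disk}, for any rectangle $[x_1,x_2]\times[y_1,y_2]\subset[0,1]^2$,
\begin{equation*}
\bbE[\mu_{\rho,q}]([x_1,x_2]\times[y_1,y_2]) = \mathcal{M}_2^{\textup{sph}}(4-\gamma^2;1)^\# \otimes \mathcal{P}^{\textup{sph}}(W_1,W_2,W_3,W_4)\bigl(A_2+A_3\in[x_1,x_2],\,A_1+A_2\in[y_1,y_2]\bigr),
\end{equation*}
so the goal reduces to computing the joint law of $(A_1,A_2,A_3,A_4)$ under the unit-area quantum sphere decorated by the four flow lines of Figure~\ref{fig-flowlines}.

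To obtain this law, I would apply Theorem~\ref{thm-sph-welding} with $n=4$ to rewrite $\mathcal{M}_2^{\textup{sph}}(4-\gamma^2)\otimes\mathcal{P}^{\textup{sph}}(W_1,W_2,W_3,W_4)$ as the integral over boundary lengths $d\ell_1\,d\ell_2\,d\ell_3\,d\ell_4$ of the product measure $\prod_{i=1}^{4}\mathcal{M}_2^{\textup{disk}}(W_i;\ell_{i-1},\ell_i)$ (indices mod $4$), up to a multiplicative constant. Under this identification the four welded disks coincide with the four regions of areas $A_1,A_2,A_3,A_4$, so the sphere's total quantum area equals $A_1+A_2+A_3+A_4$. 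A direct check from \eqref{eqn-weight} shows $W_i\in(0,2-\gamma^2/2)\subset(0,2+\gamma^2/2)$ whenever $q\in(0,1)$, so Lemma~\ref{prop-density} guarantees that each disk area admits a Lebesgue density $p_i(\cdot;\ell_{i-1},\ell_i)$ in the sense of \eqref{eqn-def-area-density}. Combining this with the conditional independence of the disks given the boundary lengths, the joint ``density'' of $(A_1,A_2,A_3,A_4,\ell_1,\ell_2,\ell_3,\ell_4)$ under $\mathcal{M}_2^{\textup{sph}}(4-\gamma^2)\otimes\mathcal{P}^{\textup{sph}}$ is proportional to $\prod_{i=1}^{4}p_i(a_i;\ell_{i-1},\ell_i)$.

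Next I would disintegrate this measure over the total area $A_1+A_2+A_3+A_4$ and, using the area disintegration \eqref{eqn-disintegrate-area}, identify the slice at total area $1$ with $\mathcal{M}_2^{\textup{sph}}(4-\gamma^2;1)\otimes\mathcal{P}^{\textup{sph}}$. On this slice $A_4=1-A_1-A_2-A_3$, so the joint density of $(A_1,A_2,A_3,\ell_1,\ell_2,\ell_3,\ell_4)$ is $\prod_{i=1}^{4}p_i(a_i;\ell_{i-1},\ell_i)$ with $a_4=1-a_1-a_2-a_3$, up to the finite normalization $|\mathcal{M}_2^{\textup{sph}}(4-\gamma^2;1)|$, which we absorb into the overall constant~$c$. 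Performing the change of variables $(a_1,a_2,a_3)\mapsto(x,y,z):=(a_2+a_3,\,a_1+a_2,\,a_2)$, whose Jacobian determinant equals $1$, gives $a_1=y-z$, $a_3=x-z$ and $a_4=1+z-x-y$; the nonnegativity constraints translate to $z\in[\max\{0,x+y-1\},\min\{x,y\}]$. Substituting and integrating over $z$ and the four boundary lengths produces the claimed density formula.

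The main obstacle is the careful bookkeeping of the two disintegrations (over area and over boundary lengths) and the ensuing normalizing constants. In particular, to make sense of the ``slice at total area $1$'' one must check that $|\mathcal{M}_2^{\textup{sph}}(4-\gamma^2;1)|<\infty$, which holds because $4-\gamma^2<4$ (as noted after \eqref{eqn-disintegrate-area}), and verify Fubini-type exchanges between the area disintegration \eqref{eqn-disintegrate-area} and the boundary-length disintegration \eqref{eq:dis-formula}; finiteness of $\mathcal{M}_2^{\textup{disk}}(W_i;\ell_{i-1},\ell_i)$ for $W_i\in(0,2+\gamma^2/2)$ is exactly what powers these manipulations.
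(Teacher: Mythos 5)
Your skeleton --- Proposition~\ref{prop-baxter-disk}, then Theorem~\ref{thm-sph-welding} with $n=4$, then the unit-Jacobian change of variables $(a_1,a_2,a_3)\mapsto(x,y,z)$ --- is the same as the paper's, and your weight check $W_i\in(0,2-\gamma^2/2)$ and the resulting constraint $z\in[\max\{0,x+y-1\},\min\{x,y\}]$ are correct. The gap is the sentence where you ``identify the slice at total area $1$'' of the welded measure with $\mathcal{M}_2^{\textup{sph}}(4-\gamma^2;1)\otimes\mathcal{P}^{\textup{sph}}$ and read off the conditional density $\prod_i p_i$ with $a_4=1-a_1-a_2-a_3$. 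Theorem~\ref{thm-sph-welding} is an identity of infinite measures; disintegrating both sides over the total quantum area determines the slices only for Lebesgue-a.e.\ value of the area, whereas $\bbE[\mu_{\rho,q}]$ is defined through the sphere conditioned to have area \emph{exactly} $1$, i.e.\ through one particular slice. Pinning down the product-density form of that specific slice requires an extra input (continuity, equivalently scaling covariance, of the slices in $a$); the ``Fubini-type exchanges'' and the finiteness of $|\mathcal{M}_2^{\textup{sph}}(4-\gamma^2;1)|$ that you flag as the main obstacles do not address this point.

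This is precisely what the paper's Proposition~\ref{prop-baxter-disk-1} and the scaling lemmas supply. One integrates against a test function $f$ with $\int_0^\infty|f(t)|t^{-4/\gamma^2}\,dt<\infty$ and uses the sphere scaling (Lemma~\ref{lmm-scale-sphere}) to rewrite the unit-area quantity as an integral over the \emph{unconditioned} measure $\mathcal{M}_2^{\textup{sph}}(4-\gamma^2)\otimes\mathcal{P}^{\textup{sph}}(W_1,\dots,W_4)$, to which the welding theorem applies directly; then, after your change of variables augmented by the extra coordinate $a=a_1+a_2+a_3+a_4$ (Jacobian $a^3$) and the rescaling $\ell_i\mapsto\sqrt a\,\ell_i$, the disk-density scaling of Lemma~\ref{lmm-area-scaling} together with $W_1+W_2+W_3+W_4=4-\gamma^2$ makes the factor $\int_0^\infty f(a)a^{-4/\gamma^2}\,da$ split off and cancel into the normalizing constant $c$. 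If you replace your slicing step by this argument (or by an equivalent limiting argument conditioning on total area in $[1,1+\e]$ and using the same scaling relations to pass to the limit), your proof is complete and coincides with the paper's.
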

	
	We start the proof by recalling that  the joint law of the four flow lines $\eta_{\op{E}},\eta_{\theta},\eta_{\op{W}}, \eta_{\theta+{\pi}}$ can be viewed as $\mathcal{P}^{\text{sph}}(W_1, W_2, W_3, W_4)$ with $(W_1,W_2,W_3,W_4)$ determined by \eqref{eqn-weight}. Then, in order to prove Theorem~\ref{thm-baxter-density-general}, we first use the scaling property of quantum disks and quantum spheres to remove the conditioning on having total quantum area one (see Proposition~\ref{prop-baxter-disk-1}), and then we conclude the proof by applying Theorem~\ref{thm-sph-welding}.
	
	\begin{proposition}\label{prop-baxter-disk-1}
		Let $\gamma\in(0,2)$. Let $(\wh{\mathbb{C}}, h, 0, \infty)$ be a quantum sphere of weight $4-\gamma^2$ (here we do not condition on the area of the quantum sphere to be 1), and let $\theta\in [-\frac \pi 2,\frac \pi 2]$. Let $\wh{h}$ be a whole-plane GFF (viewed modulo a global additive integer multiple of $2\pi \chi$) independent of $h$ and consider the corresponding four areas $A_1,A_2,A_3,A_4$ defined above (see also Figure~\ref{fig-flowlines}). Set $\rho\in(-1,1)$ and $q\in[0,1]$ such that $\rho=-\cos(\pi\gamma^2/4)$ and $q = q_\gamma(\theta)$ and consider the skew Brownian permuton $\mu_{\rho,q}$.
		
		Let $f$ be a non-zero function on $[0,\infty)$ with $\int_0^\infty |f(t)|t^{-\frac{4}{\gamma^2}}\,dt<\infty$. There exists a universal constant $c$ depending only on $\gamma$, $\theta$ and $f$ (and so only on $\rho$, $q$ and $f$), such that for all $0\le x_1\le x_2\le 1$ and $0\le y_1\le y_2\le 1$, it holds that
		\begin{equation}\label{eqn-density}
			\bbE \mu_{\rho,q}([x_1, x_2]\times [y_1, y_2]) = c\int f(A)\mathds{1}_{\left\{\frac{A_1+A_2}{A}\in [y_1, y_2], \frac{A_2+A_3}{A}\in [x_1, x_2]\right\}}\,d\mathcal{M}_2^{\textup{sph}}(4-\gamma^2)\otimes \mathcal{P}^{\textup{sph}}(W_1, W_2, W_3, W_4),
		\end{equation}
		where $A$ denotes the area of a quantum sphere sampled from $\mathcal{M}_2^{\textup{sph}}(4-\gamma^2)$, and the weights $W_1, W_2, W_3, W_4$ are given by \eqref{eqn-weight}.
	\end{proposition}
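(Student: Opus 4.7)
The strategy is to deduce the formula \eqref{eqn-density} from Proposition~\ref{prop-baxter-disk} by disintegrating $\mathcal{M}_2^{\textup{sph}}(4-\gamma^2)$ over the total quantum area and exploiting the scale invariance of the joint law of the ratios $A_i/A$, where $A=A_1+A_2+A_3+A_4$. By Proposition~\ref{prop-baxter-disk}, the left-hand side of \eqref{eqn-density} is the probability of the event
\[
B = \{A_1+A_2 \in [y_1,y_2],\ A_2+A_3 \in [x_1,x_2]\}
\]
under $\mathcal{M}_2^{\textup{sph}}(4-\gamma^2;1)^\# \otimes \mathcal{P}^{\textup{sph}}(W_1,W_2,W_3,W_4)$. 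Since the total area is deterministically $A=1$ on this measure, $B$ coincides with the scale-invariant event
\[
B^\star = \left\{\tfrac{A_1+A_2}{A} \in [y_1,y_2],\ \tfrac{A_2+A_3}{A} \in [x_1,x_2]\right\}.
\]

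The key step is the observation that the probability $P^\star$ of $B^\star$ under $\mathcal{M}_2^{\textup{sph}}(4-\gamma^2;a)^\# \otimes \mathcal{P}^{\textup{sph}}(W_1,W_2,W_3,W_4)$ does not depend on $a>0$. Indeed, Lemma~\ref{lmm-scale-sphere} (with $W/\gamma^2+1 = 4/\gamma^2$ here) identifies a sample from $\mathcal{M}_2^{\textup{sph}}(4-\gamma^2;a)^\#$ with a sample from $\mathcal{M}_2^{\textup{sph}}(4-\gamma^2;1)^\#$ whose field has been shifted by the constant $\frac{1}{\gamma}\log a$. Because $\wh h$ is independent of $h$, its flow lines $\eta_{\op{E}},\eta_\theta,\eta_{\op{W}},\eta_{\theta+\pi}$ are unaffected by a constant shift of $h$; only the LQG area measure $\mu_h$ is rescaled by the global factor $a$, so each $A_i$ is multiplied by $a$ and all ratios $A_i/A$ are preserved. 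Taking total masses in Lemma~\ref{lmm-scale-sphere} also yields $|\mathcal{M}_2^{\textup{sph}}(4-\gamma^2;a)| = a^{-4/\gamma^2}|\mathcal{M}_2^{\textup{sph}}(4-\gamma^2;1)|$, a finite quantity since $4-\gamma^2<4$.

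Combining the disintegration $\mathcal{M}_2^{\textup{sph}}(4-\gamma^2) = \int_0^\infty \mathcal{M}_2^{\textup{sph}}(4-\gamma^2;a)\,da$ with Fubini and the fact that $A=a$ is constant on each slice, I obtain
\[
\int f(A)\mathds{1}_{B^\star}\,d\mathcal{M}_2^{\textup{sph}}(4-\gamma^2)\otimes \mathcal{P}^{\textup{sph}} = \left(\int_0^\infty f(a)a^{-4/\gamma^2}\,da\right)\cdot |\mathcal{M}_2^{\textup{sph}}(4-\gamma^2;1)|\cdot P^\star.
\]
Since $P^\star = \bbE[\mu_{\rho,q}]([x_1,x_2]\times[y_1,y_2])$ by Proposition~\ref{prop-baxter-disk}, solving for $P^\star$ yields \eqref{eqn-density} with
\[
c = \left(|\mathcal{M}_2^{\textup{sph}}(4-\gamma^2;1)|\int_0^\infty f(a)a^{-4/\gamma^2}\,da\right)^{-1},
\]
which is finite and nonzero by the integrability hypothesis on $f$ together with $f\not\equiv 0$. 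The main point to justify carefully is the scale invariance of the ratios $A_i/A$ under the constant shift of $h$; once this is settled, the computation reduces to a routine combination of Fubini and the disintegration formulas.
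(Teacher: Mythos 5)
Your proof is correct and follows essentially the same route as the paper: both rest on the scaling identity of Lemma~\ref{lmm-scale-sphere} (with the independence of $\wh h$ from $h$ guaranteeing the flow lines, and hence the ratios $A_i/A$, are unchanged by a constant shift of the field), the disintegration \eqref{eqn-disintegrate-area} combined with Fubini, and Proposition~\ref{prop-baxter-disk}, yielding the same constant $c=\left(\left|\mathcal{M}_2^{\textup{sph}}(4-\gamma^2;1)\right|\int_0^\infty f(a)a^{-4/\gamma^2}\,da\right)^{-1}$. The only difference is presentational: you phrase the scaling step as $a$-independence of the normalized probability of the scale-invariant event plus the total-mass scaling, whereas the paper writes the equivalent unnormalized identity \eqref{eqn-density-2} with the prefactor $a^{-4/\gamma^2}$.
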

	
	\begin{remark}
		We remark that the function $f$ purely serves as a test function and scaling factor, which shall be eliminated later once we apply the scaling property of quantum disks. The condition $\int_0^\infty |f(t)|t^{-\frac{4}{\gamma^2}}\,dt<\infty$ is made to assure that the integral on the right hand side of \eqref{eqn-density} is finite. 
	\end{remark}
	
	\begin{proof}[Proof of Proposition~\ref{prop-baxter-disk-1}]
		We disintegrate the right-hand side of \eqref{eqn-density} in terms of quantum area. By Lemma~\ref{lmm-scale-sphere}, we have the following relation for any fixed $a>0$ 
		\begin{multline}\label{eqn-density-2}
			a^{-\frac{4}{\gamma^2}}\int \mathds{1}_{\left\{A_1+A_2\in [y_1, y_2], A_2+A_3\in [x_1, x_2]\right\}}\,d\mathcal{M}_2^{\textup{sph}}( 4-\gamma^2; 1)\otimes \mathcal{P}^{\textup{sph}}(W_1, W_2, W_3, W_4)\\
			\quad=\int \mathds{1}_{\left\{\frac{A_1+A_2}{a}\in [y_1, y_2], \frac{A_2+A_3}{a}\in [x_1, x_2]\right\}}\,d\mathcal{M}_2^{\textup{sph}}( 4-\gamma^2; a)\otimes \mathcal{P}^{\textup{sph}}(W_1, W_2, W_3, W_4).
		\end{multline}
		Recall that $A$ denotes the area of the quantum sphere sampled from $\mathcal{M}_2^{\textup{sph}}(4-\gamma^2)$. By multiplying both sides of \eqref{eqn-density-2} by $f(a)$ and integrate over $a\in (0, \infty)$, we get
		\begin{multline*}
			\left(\int_0^\infty f(a)a^{-\frac{4}{\gamma^2}}\,da\right)\left(\int \mathds{1}_{\left\{A_1+A_2\in [y_1, y_2], A_2+A_3\in [x_1, x_2]\right\}}\,d\mathcal{M}_2^{\textup{sph}}(4-\gamma^2; 1)\otimes \mathcal{P}^{\textup{sph}}(W_1, W_2, W_3, W_4)\right)\\
			=\int_0^\infty\int f(a)\mathds{1}_{\left\{\frac{A_1+A_2}{a}\in [y_1, y_2], \frac{A_2+A_3}{a}\in [x_1, x_2]\right\}}\,d\mathcal{M}_2^{\textup{sph}}(4-\gamma^2; a)\otimes \mathcal{P}^{\textup{sph}}(W_1, W_2, W_3, W_4)\,da\\
			= \int f(A)\mathds{1}_{\left\{\frac{A_1+A_2}{A}\in [y_1, y_2], \frac{A_2+A_3}{A}\in [x_1, x_2]\right\}}\,d\mathcal{M}_2^{\textup{sph}}(4-\gamma^2)\otimes \mathcal{P}^{\textup{sph}}(W_1, W_2, W_3, W_4).
		\end{multline*}
		where on the last equality we used the disintegration  formula \eqref{eqn-disintegrate-area} and the fact that a sample from $\mathcal{M}_2^{\textup{sph}}(4-\gamma^2; a)$ has quantum area $a$. 
		
		The conclusion follows from Proposition~\ref{prop-baxter-disk} with $c = \left(\left|\mathcal{M}_2^{\textup{sph}}(4-\gamma^2; 1)\right|\int_0^\infty f(a)a^{-\frac{4}{\gamma^2}}\,da\right)^{-1}.$ 
	\end{proof}
	
	We can now apply the conformal welding result stated in Theorem~\ref{thm-sph-welding} to the right-hand side of \eqref{eqn-density}. To simplify the expressions, we first need the following scaling property of quantum disks.
	
	\begin{lemma}\label{lmm-area-scaling}
		For any $\lambda>0$, the density $p_W(a, \ell_1, \ell_2)$ defined in \eqref{eqn-def-area-density} satisfies the scaling property
		\[
		p_W(\lambda^2a, \lambda \ell_1, \lambda \ell_2) = \lambda^{-\frac{2}{\gamma^2}W-3}p_W(a, \ell_1, \ell_2).
		\]
	\end{lemma}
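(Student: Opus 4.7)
The plan is to derive the claimed scaling relation directly from Lemma~\ref{lmm-scale-disks} combined with the standard scaling behavior of the $\gamma$-LQG area measure under additive shifts of the field, namely $\mu_{h+c}=e^{\gamma c}\mu_h$. The idea is to express both sides of \eqref{eqn-def-area-density} for the measure $\mathcal{M}_2^{\textup{disk}}(W;\lambda\ell_1,\lambda\ell_2)$ in two different ways and identify the densities.

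More precisely, for an arbitrary non-negative test function $g$, I would start from
\[
\int g(\mu_\psi(D))\,d\mathcal{M}_2^{\textup{disk}}(W;\lambda\ell_1,\lambda\ell_2)=\int_0^\infty g(a)\,p_W(a,\lambda\ell_1,\lambda\ell_2)\,da,
\]
which is the defining relation \eqref{eqn-def-area-density}. On the other hand, by Lemma~\ref{lmm-scale-disks} a sample from $\mathcal{M}_2^{\textup{disk}}(W;\lambda\ell_1,\lambda\ell_2)$ has the same law as a sample from $\lambda^{-\frac{2W}{\gamma^2}-1}\mathcal{M}_2^{\textup{disk}}(W;\ell_1,\ell_2)$ with $\frac{2}{\gamma}\log\lambda$ added to its field. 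Since adding $c=\frac{2}{\gamma}\log\lambda$ to the field multiplies the area measure by $e^{\gamma c}=\lambda^2$, the same integral equals
\[
\lambda^{-\frac{2W}{\gamma^2}-1}\int g(\lambda^2 a)\,p_W(a,\ell_1,\ell_2)\,da.
\]

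Changing variables $a'=\lambda^2 a$ in the last expression produces an extra factor $\lambda^{-2}$, giving
\[
\int g(a')\,p_W(a',\lambda\ell_1,\lambda\ell_2)\,da'=\lambda^{-\frac{2W}{\gamma^2}-3}\int g(a')\,p_W(a'/\lambda^2,\ell_1,\ell_2)\,da'.
\]
Since $g$ is arbitrary, this forces the pointwise identity $p_W(a',\lambda\ell_1,\lambda\ell_2)=\lambda^{-\frac{2W}{\gamma^2}-3}p_W(a'/\lambda^2,\ell_1,\ell_2)$ for Lebesgue a.e.\ $a'>0$, which upon substituting $a'=\lambda^2 a$ is the claimed formula. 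Because this step uses only measure-theoretic manipulations on top of Lemma~\ref{lmm-scale-disks}, there is no real obstacle: the only point to be a bit careful about is the thin-disk regime $W\in(0,\gamma^2/2)$, where $p_W$ is defined via the ordered beaded decomposition in Definition~\ref{def-thin-disk}; in that case the same scaling argument applies to each thick component of weight $\gamma^2-W$ (whose boundary lengths also get scaled by $\lambda$), and summing over components preserves the identity.
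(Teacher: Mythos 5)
Your proof is correct and follows essentially the same route as the paper: apply Lemma~\ref{lmm-scale-disks}, use $\mu_{\psi+\frac{2}{\gamma}\log\lambda}=\lambda^2\mu_\psi$, change variables in the area integral, and identify densities via an arbitrary test function $g$. The extra remark about the thin-disk regime is harmless but not needed, since Lemma~\ref{lmm-scale-disks} is already stated for all $W>0$ and the argument only uses that statement together with the defining relation \eqref{eqn-def-area-density}.
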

	
	\begin{proof}
		The lemma is an easy consequence of Lemma~\ref{lmm-scale-disks}, from which we know that
		\begin{equation*}
			\int g(\mu_{\psi}(D))\,d\mathcal{M}_2^{\textup{disk}}(W; \lambda \ell_1, \lambda \ell_2) = \lambda^{-\frac{2}{\gamma^2}W-1}\int g(\mu_{\psi+\frac{2}{\gamma}\log\lambda}(D))\,d\mathcal{M}_2^{\textup{disk}}(W; \ell_1, \ell_2),
		\end{equation*}
		for any non-negative measurable function $g$, where both surfaces in the above equation are embedded in the planar domain $D$.  Then from the definition given in \eqref{eqn-def-area-density} we have that
		\begin{equation*}
			\begin{split}
				\int_0^\infty g(a)p_W(a, \lambda \ell_1, \lambda \ell_2)\,da &= \lambda^{-\frac{2}{\gamma^2}W-1}\int_0^\infty g(\lambda^2 a)p_W(a, \ell_1, \ell_2)\,da \\
				&= \lambda^{-\frac{2}{\gamma^2}W-3}\int_0^\infty g( a)p_W(\lambda^{-2}a, \ell_1, \ell_2)\,da
			\end{split}
		\end{equation*} 
		and the conclusion readily follows.
	\end{proof}
	
	We now complete the proof of Theorem~\ref{thm-baxter-density-general}.
	
	\begin{proof}[Proof of Theorem~\ref{thm-baxter-density-general}]
		Recall that $p_i(a; \ell_1, \ell_2)=p_{W_i}(a; \ell_1, \ell_2)$. By Theorem~\ref{thm-sph-welding} and the definition given in \eqref{eqn-def-area-density}, we can write the right-hand side of \eqref{eqn-density} as
		\begin{equation}\label{eqn-density-3}
			c\int_{\bbR_{+}^8}f(a)\mathds{1}_{\left\{\frac{a_1+a_2}{a}\in [y_1, y_2], \frac{a_2+a_3}{a}\in [x_1, x_2]\right\}}p_1(a_1, \ell_1, \ell_2)p_2(a_2, \ell_2, \ell_3)p_3(a_3, \ell_3, \ell_4)p_4(a_4, \ell_4, \ell_1)\prod_{i=1}^4 da_i\prod_{i=1}^4 d\ell_i,
		\end{equation}
		where $a = a_1+a_2+a_3+a_4$. Applying the change of variables
		\begin{equation*}
			x = \frac{a_2+a_3}{a}; \qquad y = \frac{a_1+a_2}{a};\qquad z = \frac{a_2}{a};\qquad a = a_1+a_2+a_3+a_4;
		\end{equation*}
		then one can compute that $\left|\frac{\partial(a_1, a_2, a_3, a_4) }{\partial(x,y,z,a)}\right| = a^3$. Then \eqref{eqn-density-3} is equal to 
		\begin{multline}\label{eqn-density-4}
			c\int_{\bbR_{+}^4}\int_{x_1}^{x_2}\int_{y_1}^{y_2}\int_0^\infty\int_{\max\{0, x+y-1\}}^{\min\{x,y\}}a^3f(a) p_1((y-z)a, \ell_1, \ell_2)\cdot\\
			p_2(za, \ell_2, \ell_3)p_3((x-z)a, \ell_3, \ell_4)p_4((1+z-x-y)a, \ell_4, \ell_1)\,dz\,da\,dy\,dx\prod_{i=1}^4 d\ell_i.
		\end{multline}
		{If} we further apply the change of variables $\ell_i \mapsto \sqrt{a}\ell_i$, then from Lemma~\ref{lmm-area-scaling} and the fact that $W_1+W_2+W_3+W_4 = W = 4-\gamma^2$ from \eqref{eqn-weight}, we get that \eqref{eqn-density-4} is the same as
		\begin{multline*}
			c\int_0^\infty f(a)a^{-\frac{4}{\gamma^2}}da\int_{\bbR_{+}^4}\int_{x_1}^{x_2}\int_{y_1}^{y_2}\int_{\max\{0, x+y-1\}}^{\min\{x,y\}} p_1(y-z, \ell_1, \ell_2)\cdot\\
			p_2(z, \ell_2, \ell_3)p_3(x-z, \ell_3, \ell_4)p_4(1+z-x-y, \ell_4, \ell_1)\,dz\,dy\,dx\,\prod_{i=1}^4 d\ell_i,
		\end{multline*}
		which concludes the proof.
	\end{proof}
	
	\subsubsection{The explicit formula for the density of the Baxter permuton}
	\label{sec:density-Baxter}
	In this section we restrict to the case when $q = \frac{1}{2}$ and $\gamma = \sqrt{4/3}$. Then, as remarked below Theorem~\ref{thm:sbpfromlqg}, we have that $\theta = \theta_\gamma(q)= 0$. In addition, from \eqref{eqn-weight}, we also have that 
	\[W_1 = W_2=W_3=W_4=1-\frac{\gamma^2}{4} = \frac{\gamma^2}{2}=\frac{2}{3}.\] 
	
	We refer the reader to Remark~\ref{rem:generalization} for a discussion on the difficulties to address the general case $\rho\in(-1,1)$ and $q\in(0,1)$. 
	From Proposition~\ref{prop-disk-excursion}, if $W=\frac{\gamma^2}{2}$ then the quantum area of a sample from $\mathcal{M}_2^{\textup{disk}}(\frac{\gamma^2}{2}; \ell_1, \ell_2)^\#$ has the same law as the duration of a sample from $\mu_{\mathcal{C}_\phi}^\#(\ell_1\sqrt{2\sin\phi}, \ell_2\sqrt{2\sin\phi}e^{i\phi})$ with $\phi = \frac{\pi\gamma^2}{4}$ (where we recall that $\mu_{\mathcal{C}_\phi}^\#(\ell, re^{i\phi})$ denotes the law of the Brownian excursion in the cone $\mathcal{C}_\phi$ from $\ell$ to $re^{i\phi}$ with non-fixed time duration). In our specific case $\frac{\gamma^2}{2}=\frac{2}{3}$ and $\phi = \frac{\pi}{3}$. 
	
	Building on this, we prove in the next proposition that the density of the area of a quantum disk sampled from $\mathcal{M}_2^{\text{disk}}(\frac{2}{3};x,r)$ introduced in \eqref{eqn-def-area-density} is a constant times the function $
	\rho$ given by \eqref{eqn-rho}. This will conclude the proof of Theorem~\ref{thm-baxter-density}.
	
	\begin{proposition}\label{prop-bm-duration}
		For $\phi = \frac{\pi}{3}$, $x,r>0$ the duration $\tau$ of a sample from $\mu_{\mathcal{C}_{\frac{\pi}{3}}}^\#(x, re^{\frac{\pi i}{3}})$ has density
		\begin{equation*}
			\wt{p}(t,x,r):=\left(\left(\frac{3x r}{2t}-1\right)e^{-\frac{x^2+r^2-x r}{2t}}+e^{-\frac{(x+r)^2}{2t}}\right)\frac{(x^3+r^3)^2}{18x^2r^2}\cdot\frac{1}{t^2}\cdot \mathds 1_{t>0}.
		\end{equation*}
	\end{proposition}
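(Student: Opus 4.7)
The plan is to compute the density of the duration of the Brownian excursion in $\mathcal{C}_{\pi/3}$ from $x$ to $re^{i\pi/3}$ via the heat kernel of Brownian motion killed on $\partial\mathcal{C}_{\pi/3}$, and extract the excursion duration density by a boundary-derivative procedure. The key simplification driving the explicit formula is that $\phi = \pi/3 = \pi/n$ with $n=3$ integer, so $\mathcal{C}_{\pi/3}$ is a fundamental domain for the action of the dihedral group $D_3$ (order $6$) generated by the reflections across the two rays $\theta=0$ and $\theta=\pi/3$, and the method of images yields a closed-form heat kernel.

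First, I would write down the transition density $p_t^{\mathcal{C}_{\pi/3}}(z_1,z_2)$ of Brownian motion in $\mathcal{C}_{\pi/3}$ killed on its boundary. By the reflection principle,
\begin{equation*}
p_t^{\mathcal{C}_{\pi/3}}(z_1, z_2) = \frac{1}{2\pi t}\sum_{\sigma\in D_3}\epsilon(\sigma)\exp\!\left(-\frac{|z_1-\sigma z_2|^2}{2t}\right),
\end{equation*}
where $\epsilon(\sigma)=+1$ for the three rotations and $-1$ for the three reflections. Setting $z_1=x e^{i\alpha_1}$ and $z_2=re^{i(\pi/3-\alpha_2)}$, so that both points lie in the interior with $\alpha_1,\alpha_2\to 0^+$ corresponding to the boundary limits, the six cross-terms factor through the cosines $\cos(\alpha_1\pm\alpha_2+c)$ for $c\in\{-\pi/3,\pi/3,\pi\}$. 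Next, following the standard boundary-excursion procedure for planar domains (as used in \cite{LW04} and the heat-equation argument in \cite{Ty85}), the unnormalized density of the duration $\tau=t$ for the boundary-to-boundary excursion measure $\mu_{\mathcal{C}_{\pi/3}}(x,re^{i\pi/3})$ is
\begin{equation*}
\frac{1}{xr}\cdot\partial^2_{\alpha_1\alpha_2}\, p_t^{\mathcal{C}_{\pi/3}}(z_1,z_2)\Big|_{\alpha_1=\alpha_2=0},
\end{equation*}
where the factor $1/(xr)$ converts the angular derivatives $\partial_{\alpha_1},\partial_{\alpha_2}$ into inward normal derivatives at the two boundary points.

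I would then carry out the cross differentiation term by term. For each of the six exponentials of the form $e^{(xr/t)\cos(\alpha_1+\beta\alpha_2+c)}$ with $\beta=\pm 1$, direct computation at $(0,0)$ gives
\begin{equation*}
\partial^2_{\alpha_1\alpha_2}\, e^{A\cos(\alpha_1+\beta\alpha_2+c)}\Big|_{0}=A\beta e^{A\cos c}\bigl(-\cos c + A\sin^2 c\bigr),\qquad A=\frac{xr}{t}.
\end{equation*}
Substituting $c\in\{-\pi/3,\pi/3,\pi\}$ and collecting the six contributions with their $D_3$-signs and $\beta$'s, four terms combine into $4Ae^{A/2}(-\tfrac12+\tfrac{3A}{4})$ and two combine into $2Ae^{-A}$. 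Multiplying by $\frac{1}{2\pi t}e^{-(x^2+r^2)/(2t)}$ and by $1/(xr)$, the resulting unnormalized duration density becomes
\begin{equation*}
\frac{1}{\pi t^2}\left[\left(\frac{3xr}{2t}-1\right)e^{-(x^2+r^2-xr)/(2t)}+e^{-(x+r)^2/(2t)}\right],
\end{equation*}
which already has the exact functional form claimed in the proposition.

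Finally, to obtain the normalized density $\wt{p}$, I would divide by the total mass of $\mu_{\mathcal{C}_{\pi/3}}(x,re^{i\pi/3})$, computed by integrating the display above in $t$. Using the substitution $u=1/t$, each exponential becomes $\int_0^\infty u^k e^{-au}\,du$ for $k\in\{0,1\}$, and straightforward algebra yields $\int_0^\infty(\cdots)\,dt = \frac{18\,x^2r^2}{(x^3+r^3)^2}$ (which one may cross-check against the total mass formula \eqref{eqn-size-disk-measure} together with Proposition~\ref{prop-disk-excursion} specialized to $\gamma^2=4/3$). Dividing gives the claimed prefactor $(x^3+r^3)^2/(18x^2r^2)$. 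The main obstacle is the bookkeeping: verifying that out of the six $D_3$-contributions exactly four collapse into the $e^{-(x^2+r^2-xr)/(2t)}$ term and two into the $e^{-(x+r)^2/(2t)}$ term, with the signs and prefactors producing precisely $\tfrac{3xr}{2t}-1$ and $+1$, and carefully justifying the boundary-limit procedure so that the normal-derivative formula genuinely represents the duration density of the normalized excursion $\mu_{\mathcal{C}_{\pi/3}}^\#(x,re^{i\pi/3})$ defined through the Prohorov limit~\eqref{eqn-excursion-convergence}.
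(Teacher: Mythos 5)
Your proposal is correct, and at its core it is the same computation as the paper's: both rest on the method of images for the cone of angle $\pi/3$ (the six-element reflection group), which is exactly what underlies the formula from \cite{Ty85} that the paper quotes, and your term-by-term bookkeeping checks out — the four $c=\pm\pi/3$ terms and the two $c=\pi$ terms do produce $\frac{1}{\pi t^2}\big[(\frac{3xr}{2t}-1)e^{-(x^2+r^2-xr)/(2t)}+e^{-(x+r)^2/(2t)}\big]$, and the $t$-integral of the bracket is indeed $\frac{18x^2r^2}{(x^3+r^3)^2}$, giving the stated prefactor. Where you genuinely diverge is in how the conditioning/normalization is handled: the paper starts from Iyengar's joint law of $(\tau,W_\tau)$ for an interior starting point $z=x+iy$, divides by the exit-point marginal computed separately via the conformal map $z\mapsto z^3$ and half-plane exit distributions, and then expands both numerator and denominator to first order in $y$ as $y\to0^+$, invoking \eqref{eqn-excursion-convergence}; you instead take normal derivatives at both boundary points of the killed heat kernel and normalize by integrating in $t$. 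The two routes are equivalent: your double angular derivative at $\alpha_1=\alpha_2=0$ is precisely the first-order coefficient in the paper's small-$y$ (equivalently small-angle) expansion, and normalizing by the $t$-integral is the same as dividing by the $y\to0$ limit of the exit-point marginal (interchanging the limit with the $t$-integral is immediate by dominated convergence here). Your version buys a more self-contained argument — no separate computation of the harmonic exit density is needed, and any $t$-independent factors such as your $1/(xr)$ conversion from angular to normal derivatives cancel in the normalization — while the paper's version makes the identification with the Prohorov-limit definition of $\mu_{\mathcal{C}_{\pi/3}}^\#(x,re^{i\pi/3})$ completely explicit. The one step you rightly flag, justifying that the normalized double-normal-derivative formula is the duration density of the excursion measure defined through \eqref{eqn-excursion-convergence}, is exactly this $y\to0^+$ ratio argument, so it closes with no new ideas; your cross-check against \eqref{eqn-size-disk-measure} and Proposition~\ref{prop-disk-excursion} is a sensible sanity check but not needed for the proof.
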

	
	\begin{proof}
		Let $(e_1, e_2)$ be the standard basis for $\bbR^2$. For $j = 1, ..., 5$, let $F_j$ be the reflection on $\bbR^2$ about line $y = \tan \frac{j\pi }{3}x$,  $T_0 = \textup{id}$ and $T_j = F_j\circ T_{j-1}$. Also for $z = x+iy =  re^{i\theta}$, let $\wt{z} = re^{i(\frac{\pi}{3}-\theta)} = \frac{x+\sqrt{3}y}{2}+\frac{\sqrt{3}x-y}{2}i$ be its reflection about $y = \tan \frac{\pi }{6}x$. Then for a standard Brownian motion $(W_t)_{t\ge 0} = (X_t, Y_t)_{t\ge 0}$ started at $z\in \mathcal{C}_\phi$ killed upon leaving $\mathcal{C}_\phi$ (the corresponding probability measure is denoted by $\mathbb{P}^z$), following \cite[Equation 16]{Ty85}, its duration $\tau$ and the hitting point $W_\tau$ has joint law
		\begin{equation}\label{eqn-excursion-jointlaw}
			\bbP^z(\tau\in dt, W_\tau\in re^{\frac{\pi i}{3}}\,dr) = \frac{1}{4\pi t^2}\sum_{k=0}^5 (-1)^k e^{-\frac{|\wt{z} - rT_ke_1|^2}{2t}}(\wt{z}\cdot T_ke_2)\,dtdr:=p_1(x,y,t,r)\,dtdr,
		\end{equation}
		where the dot represents the usual inner product in $\bbR^2$. Note that $T_0e_1 = T_5e_1 = 1$, $T_1e_1 = T_2e_1 = e^{\frac{2\pi i}{3}}$, $T_3e_1 = T_4e_1 = e^{\frac{4\pi i}{3}}$, $T_0e_2 = -T_5e_2 = i$, $T_1e_2 = -T_2e_2 = e^{\frac{\pi i}{6}}$, $T_3e_2 = -T_4e_2 = e^{\frac{5\pi i}{6}}$. Then the right-hand side of \eqref{eqn-excursion-jointlaw} can be written as
		\begin{equation*}
			\frac{1}{2\pi t^2}\left(\frac{\sqrt{3}x-y}{2}e^{-\frac{x^2+y^2+r^2-r(x+\sqrt{3}y)}{2t}}-\frac{\sqrt{3}x+y}{2}e^{-\frac{x^2+y^2+r^2-r(x-\sqrt{3}y)}{2t}} + ye^{-\frac{x^2+y^2+r^2+2rx}{2t}}  \right)dtdr.
		\end{equation*}
		On the other hand, using the conformal mapping $z\mapsto z^3$ and the conformal invariance of planar Brownian motion, together with standard planar Brownian exit probability calculations on $\mathbb{H}$, one has
		\begin{equation*}
			\bbP^z\left(W_\tau\in  re^{\frac{\pi i}{3}}dr\right) = \frac{3r^2}{\pi}\frac{3x^2y-y^3}{(-r^3-(x^3-3xy^2))^2+(3x^2y-y^3)^2)}\,dr:=p_2(x,y,r)\,dr,
		\end{equation*}
		and it follows that $	\bbP^z(\tau\in dt\,|\, W_\tau\in re^{\frac{\pi i}{3}}\,dr) = \frac{p_1(x,y,t,r)}{p_2(x,y,r)}\,dt$. 
		
		Now for fixed $x,t, r$, as $y\to 0^+$, we have 
		\begin{equation}\label{eqn-excursion-jointlaw-2}
			p_1(x,y,t,r) = \frac{1}{2\pi t^2}\big((\frac{3xr}{2t}-1)e^{-\frac{x^2+r^2-rx}{2t}}+e^{-\frac{(x+r)^2}{2t}}  \big)y + o(y^2);
		\end{equation}
		\begin{equation}\label{eqn-excursion-jointlaw-3}
			p_2(x,y,r) = \frac{9x^2r^2}{\pi(x^3+r^3)^2}y+o(y^2).
		\end{equation}
		Therefore combining \eqref{eqn-excursion-jointlaw-2}, \eqref{eqn-excursion-jointlaw-3} along with the convergence \eqref{eqn-excursion-convergence}, it follows that for $x, r>0$, 
		\[ 
		\bbP^x\left(\tau\in dt\middle| W_\tau\in re^{\frac{\pi i}{3}}dr\right) = \frac{(r^3+x^3)^2}{18r^2x^2t^2}\left(\left(\frac{3rx}{2t}-1\right)e^{-\frac{x^2+r^2-rx}{2t}}+e^{-\frac{(x+r)^2}{2t}}  \right) dt,
		\]
		and this concludes the proof.
	\end{proof}
	
	\begin{remark}
		We remark that the sum \eqref{eqn-excursion-jointlaw} comes from solving the heat equation
		\begin{equation*}
			\partial_t u(t, z) = \frac{1}{2}\Delta u(t, z), \ u(0, z) = f(z), \ z\in \mathcal{C}_\phi; \qquad u(t,z) = 0, \ z\in \partial \mathcal{C}_\phi,
		\end{equation*}
		via the \emph{method of images}. The solution takes a simple form if $\phi = \frac{\pi}{m}$ for an integer $m>0$, while for general $\phi\in(0,\pi)$, the  \eqref{eqn-excursion-jointlaw} can be written as an infinite sum in terms of the Bessel functions \cite[Equation 8]{Ty85}.
	\end{remark}
	
	We can now complete the proof of Theorem~\ref{thm-baxter-density}.
	
	\begin{proof}[Proof of Theorem~\ref{thm-baxter-density}]
		Combining Propositions~\ref{prop-disk-excursion}~and~\ref{prop-bm-duration} along with \eqref{eqn-size-disk-measure} for $\gamma= \sqrt{4/3}$, we see that the quantum area of a sample from $\mathcal{M}_2^{\textup{disk}}(\frac{\gamma^2}{2}; x, r)$ has density given by a universal constant $c$ times the function $p(t,x,r)$ introduced in \eqref{eqn-rho}. Then the conclusion is straightforward from Theorem~\ref{thm-baxter-density-general}.
	\end{proof}
	
	\begin{remark}\label{rem:generalization}
		We remark that for general $q\in(0,1)$ and $\gamma\in(-1,1)$, Theorem~\ref{thm-baxter-density-general} gives a description of the skew Brownian permuton in terms of the density $p_W$ of quantum disks. For  $W\neq \frac{\gamma^2}{2} $, an explicit description of the law of the quantum area under $\mathcal{M}_2^{\textup{disk}}\left(W; \ell, r\right)$ will be given in a forthcoming work~\cite{ARS22} of Ang, Remy, Zhu and the third author of this paper. This and other results from~\cite{ARS22} will then be used to give a formula for  $\theta_\gamma(q)$  by Ang and the third and the fourth authors of this paper. The law of the quantum area is much more involved than its counterpart when $W=\frac{\gamma^2}{2}$, but preliminary calculations suggest that the formula for $\theta_\gamma(q)$ is rather simple. 
	\end{remark}
	
	\subsubsection{Relations between the density of the Baxter permuton and spherical tetrahedrons}\label{sect:rel_tetra}
	In this subsection, we comment on the relation between the density $p_B(x, y)$ given by \eqref{eqn-baxter-density} and the area function of spherical tetrahedrons in $\mathbb{S}^3:=\left\{(x_1,x_2,x_3,x_4)\in\mathbb R^4:x_1^2+x_2^2+x_3^2+x_4^2=1\right\}$.
	
	Recall the function $\rho(t, x, r)$ in \eqref{eqn-rho}, that is
	\begin{equation*}
		\rho(t, x, r):= \frac{1}{t^2} \left(\left(\frac{3rx}{2t}-1\right)e^{-\frac{r^2+x^2-rx}{2t}}+e^{-\frac{(x+r)^2}{2t}}\right).
	\end{equation*}
	Let  
	\begin{equation*}
		g(a_1, a_2, a_3, a_4):=\int_{\bbR_{+}^4} \rho(a_1, \ell_1, \ell_2)\rho(a_2, \ell_2, \ell_3)\rho(a_3, \ell_3, \ell_4)\rho(a_4, \ell_4, \ell_1)\,d\ell_1\,d\ell_2\,d\ell_3\,d\ell_4.
	\end{equation*}
	From Propositions~\ref{prop-disk-excursion}~and~\ref{prop-bm-duration} we know that $g$ is the joint law of the quantum areas of the four weight $\frac{\gamma^2}{2}$ quantum disks obtained by welding a weight $4-\gamma^2$ quantum sphere (as in the statement of Theorem~\ref{thm-sph-welding}) {for $\gamma=\sqrt{4/3}$}. On the other hand, the density $p_B$ of the intensity measure $\bbE [\mu_B]$ of the Baxter permuton $\mu_B$ satisfies, as stated in \eqref{eqn-baxter-density},
	\begin{equation*}
		p_B(x, y) =  c\int_{\max\{0, x+y-1\}}^{\min\{x, y\}}g(y-z, z, x-z, 1+z-x-y)\,dz.
	\end{equation*}
	For fixed $a_1, a_2, a_3, a_4$, the function $g$ can be written as a linear combination of integrals
	\begin{equation}\label{eqn-gaussians}
		\int_{\bbR_{+}^4} \prod_{j\in J}x_jx_{j+1} e^{-\frac{1}{2}x^T\Sigma x}\,dx,
	\end{equation}
	where $J\subset\{1,2,3,4\}$, $x_5=x_1$, and $\Sigma$ is a non-negative definite matrix depending only on $a_1, a_2, a_3, a_4$. 
	
	{There are two cases: (i) $\Sigma$ is non-singular (implying that $\Sigma$ is positive definite) and (ii) $\Sigma$ is singular. We will only consider case (i) below, but remark that \eqref{eqn-gaussians} for $\Sigma$ singular can be approximated arbitrarily well by \eqref{eqn-gaussians} for $\Sigma$ non-singular, so the discussion below is also relevant for case (ii) as we can consider an approximating sequence of non-singular matrices $\Sigma$.} 
	
	For $x\in \bbR^4$, we write $x\succeq 0$ if all the entries of $x$ are non-negative. Consider the function 
	\[F(\Sigma):=\int_{\bbR^4}e^{-\frac{1}{2}x^T\Sigma x}\mathds1_{x\succeq0}\,dx\] 
	defined on the space $\{\Sigma\in \bbR^{4\times 4}: \Sigma^T = \Sigma, \ \exists {\delta}>0, x^T\Sigma x\ge \delta \|x\|^2, \forall x\succeq 0  \}$, {which in particular contains the set of positive definite symmetric $4\times 4$ matrices}. Then it is clear that $F(\Sigma)$ is a smooth function in this domain.	
	{Since we assume} $\Sigma$ is positive definite, we have
	\[
	F(\Sigma) = \det(\Sigma)^{-\frac{1}{2}}\int_{\bbR^4}e^{-\frac{1}{2}y^Ty}\mathds 1_{\Sigma^{-\frac{1}{2}}y\succeq 0}\,dy.
	\]
	Using polar coordinates and letting
	\begin{equation}\label{eqn-s-function}
		S(\Sigma):=\left|\left\{y\in \mathbb{S}^3: \Sigma y\succeq 0\right\}\right|,
	\end{equation} 
	we get
	\begin{equation*}
		F(\Sigma) = \det(\Sigma)^{-\frac{1}{2}}S(\Sigma^{-\frac{1}{2}})\int_0^\infty r^3e^{-\frac{1}{2}r^2}\,dr = 2\det(\Sigma)^{-\frac{1}{2}}S(\Sigma^{-\frac{1}{2}}).
	\end{equation*}
	Hence the integral in \eqref{eqn-gaussians} can be expressed in terms of the function $S$ defined in \eqref{eqn-s-function} as follows
	\begin{equation*}
		\int_{\bbR_{+}^4} \prod_{j\in J}x_jx_{j+1} e^{-\frac{1}{2}x^T\Sigma x}\,dx =
		{(-1)^{|J|}\prod_{j\in J}\frac{\partial}{\partial \sigma_{j,j+1}} F(\Sigma)=}
		(-1)^{|J|}\prod_{j\in J}\frac{\partial}{\partial \sigma_{j,j+1}}2\det(\Sigma)^{-\frac{1}{2}}S(\Sigma^{-\frac{1}{2}}),
	\end{equation*}
	where $\Sigma = (\sigma_{ij})$ is viewed as element of $\bbR^{10}$.
	
	The function $S(\Sigma)$ can be described in terms of volume of spherical tetrahedron. The region $\{y\in \mathbb{S}^3: \Sigma y\succeq 0\}$ can be thought as points on the sphere staying on the positive side of the hyperplanes passing through the origin induced by rows of $\Sigma$. Then it follows that the six dihedral angles are given by $\left(\pi - \frac{\langle \sigma_i, \sigma_j \rangle}{|\sigma_i||\sigma_j|}\right)_{1\le i<j\le 4},$ where $\sigma_i$ is the $i$-th row of $\Sigma$, while the Gram matrix has entries $\frac{\langle \sigma_i, \sigma_j \rangle}{|\sigma_i||\sigma_j|} $. $S(\Sigma)$ is precisely given by the volume of the spherical tetrahedron with dihedral angles $\cos\theta_i = -\frac{\langle \sigma_1, \sigma_{i+1} \rangle}{|\sigma_1||\sigma_{i+1}|}$ for $i = 1, 2, 3$,  $\cos\theta_4 = -\frac{\langle \sigma_3, \sigma_4 \rangle}{|\sigma_3||\sigma_4|}$, $\cos\theta_5 = -\frac{\langle \sigma_2, \sigma_4 \rangle}{|\sigma_2||\sigma_4|}$, $\cos\theta_6 =- \frac{\langle \sigma_2, \sigma_3 \rangle}{|\sigma_2||\sigma_3|}$, which can be traced from \cite[Theorem 1.1]{Mur12} and also the Sforza's formula as listed in \cite[Theorem 2.7]{AM14}. Therefore the value of $S(\Sigma)$ can be described as a linear combination of dilogarithm functions. 
	
	\subsection{Expected proportion of inversions in the skew Brownian permuton}\label{sec:E21theta}
	
	In this section we {prove Proposition~\ref{prop-expected-occ-21}.} 
	We start with the following description for sampling a point $(x,y)$ in the unit square $[0,1]^2$ from the skew Brownian permuton $\mu_{\rho,q}$. Recall the notation for the quantum areas $A_1^{w},A_2^{w},A_3^{w},A_4^{w}$ introduced before Proposition~\ref{prop-baxter-disk} (see Figure~\ref{fig-flowlines}).
	
	\begin{lemma}\label{lm-permuton-sample}
		With probability 1, given an instance of the unit-area quantum sphere $(\wh{\mathbb{C}}, h, 0, \infty)$ and a whole-plane GFF $\wh{h}$ (viewed modulo a global additive integer multiple of $2\pi \chi$) with associated space-filling counterflow lines $\eta'$ and $\eta'_{\theta-\frac\pi2}$, the following two sampling procedures agree:
		\begin{enumerate}
			\item Let $\mu_{\rho,q}$ be the skew Brownian permuton constructed from the tuple $(h, \eta', \eta'_{\theta-\frac\pi2})$ as in Theorem~\ref{thm:sbpfromlqg}. Sample $(x,y)$ from $\mu_{\rho,q}$.
			\item First sample a point $\mathbf{w}\in \wh{\mathbb{C}}$ from the quantum area measure $\mu_h$. Output $(A_2^{\mathbf{w}}+A_3^{\mathbf{w}}, A_1^{\mathbf{w}}+A_2^{\mathbf{w}})$.
		\end{enumerate}
	\end{lemma}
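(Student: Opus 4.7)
The plan is to derive Lemma~\ref{lm-permuton-sample} as a quenched refinement of the computation already performed in the proof of Proposition~\ref{prop-baxter-disk}. By Proposition~\ref{prop:permuton-q-area}, with probability one, for all rectangles $R=[x_1,x_2]\times[y_1,y_2]\subset[0,1]^2$,
\begin{equation*}
\mu_{\rho,q}(R)=\mu_h\bigl(\eta'([x_1,x_2])\cap\eta'_{\theta-\frac{\pi}{2}}([y_1,y_2])\bigr).
\end{equation*}
Since rectangles with rational corners determine a Borel probability measure on $[0,1]^2$, it will suffice to show that, almost surely, the law of $(A_2^{\mathbf{w}}+A_3^{\mathbf{w}},A_1^{\mathbf{w}}+A_2^{\mathbf{w}})$ when $\mathbf{w}$ is sampled (conditionally on $h,\wh h$) from $\mu_h$ assigns mass $\mu_h(\eta'([x_1,x_2])\cap\eta'_{\theta-\frac{\pi}{2}}([y_1,y_2]))$ to each such rectangle.

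To this end I would argue that, for $\mu_h$-a.e.\ point $\mathbf{w}\in\wh{\mathbb C}$, the time $t_{\mathbf{w}}$ at which $\eta'$ first hits $\mathbf{w}$ equals $A_2^{\mathbf{w}}+A_3^{\mathbf{w}}$, and the first hitting time $s_{\mathbf{w}}$ of $\mathbf{w}$ by $\eta'_{\theta-\frac{\pi}{2}}$ equals $A_1^{\mathbf{w}}+A_2^{\mathbf{w}}$. The first identity holds because the flow lines $\eta_{\mathrm W}^{\mathbf{w}}$ and $\eta_{\mathrm E}^{\mathbf{w}}$ are almost surely the outer boundaries of $\eta'$ stopped upon hitting $\mathbf{w}$ (as recalled at the end of Section~\ref{sect:sle_ig}), so the region filled by $\eta'$ before $\mathbf{w}$ is precisely the union of the two cells cut out by $\{\eta_{\mathrm E}^{\mathbf{w}},\eta_{\theta}^{\mathbf{w}},\eta_{\mathrm W}^{\mathbf{w}},\eta_{\theta+\pi}^{\mathbf{w}}\}$ visited before $\mathbf{w}$; under our labeling those are the cells of area $A_2^{\mathbf{w}}$ and $A_3^{\mathbf{w}}$ (compare Figure~\ref{fig-flowlines}). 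Combined with the fact that $\eta'$ is parametrized by $\mu_h$-mass, this gives $t_{\mathbf{w}}=A_2^{\mathbf{w}}+A_3^{\mathbf{w}}$; the analogous statement for $\eta'_{\theta-\frac{\pi}{2}}$, whose outer boundaries at $\mathbf{w}$ are $\eta_\theta^{\mathbf{w}}$ and $\eta_{\theta+\pi}^{\mathbf{w}}$, yields $s_{\mathbf{w}}=A_1^{\mathbf{w}}+A_2^{\mathbf{w}}$. The exceptional set where these identifications can fail consists of the multiple points of $\eta'$ and of $\eta'_{\theta-\frac{\pi}{2}}$, which has $\mu_h$-measure zero (see e.g.\ \cite[Section B.5]{DMS14}).

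Combining these pointwise identifications, for any fixed rectangle $R$ and $\mu_h$-a.e.\ $\mathbf{w}$, we have $(A_2^{\mathbf{w}}+A_3^{\mathbf{w}},A_1^{\mathbf{w}}+A_2^{\mathbf{w}})\in R$ iff $\mathbf{w}\in\eta'([x_1,x_2])\cap\eta'_{\theta-\frac{\pi}{2}}([y_1,y_2])$. Integrating against $\mu_h$ shows that the pushforward of $\mu_h$ under $\mathbf{w}\mapsto(A_2^{\mathbf{w}}+A_3^{\mathbf{w}},A_1^{\mathbf{w}}+A_2^{\mathbf{w}})$ assigns mass $\mu_h(\eta'([x_1,x_2])\cap\eta'_{\theta-\frac{\pi}{2}}([y_1,y_2]))=\mu_{\rho,q}(R)$ to $R$, and taking a countable family of rectangles generating the Borel $\sigma$-algebra of $[0,1]^2$ promotes this equality of rectangle masses to equality of the two (conditional) laws almost surely. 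The main subtlety is keeping the argument quenched: we must argue that the $\mu_h$-null exceptional sets can be chosen simultaneously in $\mathbf{w}$ for all rectangles, which is handled by Fubini applied to the countable generating family exactly as in the last paragraph of the proof of Proposition~\ref{prop:permuton-q-area}.
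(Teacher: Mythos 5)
Your argument is correct and follows essentially the same route as the paper: the paper's proof likewise invokes the pointwise identification (via the flow lines $\eta_{\op{E}}^{\mathbf{w}},\eta_{\theta}^{\mathbf{w}},\eta_{\op{W}}^{\mathbf{w}},\eta_{\theta+\pi}^{\mathbf{w}}$ being the boundaries of the two space-filling curves stopped at $\mathbf{w}$, and the $\mu_h$-parametrization) already used in Propositions~\ref{prop:permuton-q-area} and~\ref{prop-baxter-disk}, and then applies Proposition~\ref{prop:permuton-q-area} to match rectangle masses, exactly as you do. Your additional care about a countable generating family of rectangles and the $\mu_h$-null set of multiple points just makes explicit what the paper leaves implicit.
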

	
	\begin{proof}
		Using the same reasoning as in Propositions~\ref{prop:permuton-q-area}~and~\ref{prop-baxter-disk}, by our choice of parameterization, a.s.\
		\begin{equation*}
			\int_{\mathbb{C}}\mathds{1}_{\left\{A_2^{w}+A_3^{w}\in [x_1,x_2], A_1^{w}+A_2^{w}\in [y_1,y_2]\right\}}\mu_h(dw)=\mu_h\left(\eta'([x_1, x_2])\cap\eta'_{\theta-\frac\pi2}([y_1, y_2])\right).
		\end{equation*}
		Applying Proposition~\ref{prop:permuton-q-area} once more, $\mu_{h}\Big(  \eta'_0([x_1, x_2])\cap \eta'_{\theta-\frac{\pi}{2}}([y_1, y_2])\Big)=\mu_{\rho,q}\Big([x_1, x_2]\times [y_1, y_2]\Big)$.
	\end{proof}
	
	We have the following expression for $\pocc(21, \mu_{\rho,q})$ (recall its definition from \eqref{eq:occ_permuton}).
	\begin{lemma}\label{lm-expected-21}	
		Let $(\wh{\mathbb{C}}, h, 0, \infty)$ be a unit-area quantum sphere  and  $\wh{h}$ an independent whole-plane GFF (viewed modulo a global additive integer multiple of $2\pi \chi$) with associated space-filling counterflow lines $\eta'$ and $\eta'_{\theta-\frac\pi2}$. Let $\mu_{\rho,q}$ be the skew Brownian permuton constructed from the tuple $(h, \eta', \eta'_{\theta-\frac\pi2})$ as in Theorem~\ref{thm:sbpfromlqg}. For  a point $\mathbf{w}\in \wh{\mathbb{C}}$ sampled from the quantum area measure $\mu_h$, it a.s.\ holds that
		\begin{equation*}
			\pocc(21, \mu_{\rho,q}) = 2\cdot\bbE \left[A_1^{\mathbf{w}}\middle|(h, \wh{h})\right].
		\end{equation*}
	\end{lemma}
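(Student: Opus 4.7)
The plan is to unfold the definition of $\pocc(21,\mu_{\rho,q})$, replace the i.i.d.\ permuton samples by two i.i.d.\ samples from the quantum area measure (via Lemma~\ref{lm-permuton-sample}), and then translate the resulting event into an explicit statement about which of the four flow-line sectors one of the two sampled points lies in, relative to the other.

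Concretely, using \eqref{eq:occ_permuton} with $k=2$ and $\pi=21$, we have $\pocc(21,\mu_{\rho,q})=\bbP(X_1<X_2,\,Y_1>Y_2\mid\mu_{\rho,q})+\bbP(X_2<X_1,\,Y_2>Y_1\mid\mu_{\rho,q})$ for two i.i.d.\ samples $(X_1,Y_1),(X_2,Y_2)$ from $\mu_{\rho,q}$; symmetry in the sample index immediately yields an overall factor of $2$ and so it suffices to show that the first summand equals $\bbE[A_1^{\mathbf w}\mid(h,\wh h)]$. By Lemma~\ref{lm-permuton-sample}, I may sample the two points by first drawing $\mathbf w_1,\mathbf w_2$ i.i.d.\ from $\mu_h$ and then setting $(X_i,Y_i)=(A_2^{\mathbf w_i}+A_3^{\mathbf w_i},\,A_1^{\mathbf w_i}+A_2^{\mathbf w_i})$.

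The core step is to reinterpret the two inequalities geometrically. Because $\eta'$ is parametrized by quantum area, $X_i=A_2^{\mathbf w_i}+A_3^{\mathbf w_i}$ is precisely the $\eta'$-hitting time of $\mathbf w_i$, and similarly $Y_i$ is the $\eta'_{\theta-\pi/2}$-hitting time of $\mathbf w_i$. Using the visiting order recorded in Figure~\ref{fig-flowlines}, namely that $\eta'$ first fills the sectors $A_2^{\mathbf w_1}$ and $A_3^{\mathbf w_1}$, then hits $\mathbf w_1$, then fills $A_1^{\mathbf w_1}$ and $A_4^{\mathbf w_1}$, the condition $X_1<X_2$ becomes $\mathbf w_2\in A_1^{\mathbf w_1}\cup A_4^{\mathbf w_1}$ almost surely. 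Analogously, since $\eta'_{\theta-\pi/2}$ first fills $A_1^{\mathbf w_1}\cup A_2^{\mathbf w_1}$ before reaching $\mathbf w_1$, the condition $Y_1>Y_2$ becomes $\mathbf w_2\in A_1^{\mathbf w_1}\cup A_2^{\mathbf w_1}$. Intersecting the two sectors gives
\begin{equation*}
\{X_1<X_2,\ Y_1>Y_2\}=\{\mathbf w_2\in A_1^{\mathbf w_1}\}\quad\text{a.s.}
\end{equation*}

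Conditionally on $(h,\wh h)$ and on $\mathbf w_1$, the probability of $\{\mathbf w_2\in A_1^{\mathbf w_1}\}$ under $\mu_h$ equals the quantum area of that sector, i.e.\ $A_1^{\mathbf w_1}$ (the total quantum area being $1$). Integrating out $\mathbf w_1\sim\mu_h$ yields $\bbP(X_1<X_2,Y_1>Y_2\mid(h,\wh h))=\bbE[A_1^{\mathbf w}\mid(h,\wh h)]$, and the factor of $2$ from the symmetrization step completes the proof. The only genuinely delicate point is the geometric identification of the intersection of the two time-ordering events with the single sector $A_1^{\mathbf w_1}$; this relies on the statements from Section~\ref{sect:sle_ig} that the flow lines $\eta_W^{\mathbf w},\eta_E^{\mathbf w}$ (resp.\ $\eta_\theta^{\mathbf w},\eta_{\theta+\pi}^{\mathbf w}$) are the left/right outer boundaries of $\eta'$ (resp.\ $\eta'_{\theta-\pi/2}$) stopped at $\mathbf w$, together with the a.s.\ fact that a $\mu_h$-typical point is not a multiple point of either space-filling curve.
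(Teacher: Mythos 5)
Your proposal is correct and follows essentially the same route as the paper's proof: symmetrize via \eqref{eq:occ_permuton}, replace the permuton samples by $\mu_h$-samples through Lemma~\ref{lm-permuton-sample}, identify the event $\{X_1<X_2,\,Y_1>Y_2\}$ with $\{\mathbf w_2\in A_1^{\mathbf w_1}\}$ using the visiting order of $\eta'$ and $\eta'_{\theta-\pi/2}$, and integrate out the second point using that the total quantum area is $1$. Your explicit intersection of the two sector unions $A_1^{\mathbf w_1}\cup A_4^{\mathbf w_1}$ and $A_1^{\mathbf w_1}\cup A_2^{\mathbf w_1}$ is just a slightly more detailed rendering of the step the paper states more tersely.
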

	\begin{proof}
		By symmetry and the definition given in \eqref{eq:occ_permuton}, 
		\begin{equation}\label{eqn-permuton-21}
			\pocc(21, \mu_{\rho,q}) = 2\iint_{[0,1]^{2}}\mathds{1}_{\left\{x_1<x_2;\,y_1>y_2\right\}}\mu_{\rho,q}(dx_1dy_1)\mu_{\rho,q}(dx_2dy_2).
		\end{equation}
		Therefore applying Lemma~\ref{lm-permuton-sample}, if we first independently sample $(\mathbf{w}, \wt{\mathbf{w}})$ from the quantum area measure $\mu_h$, then the right-hand side of \eqref{eqn-permuton-21} is the same as
		\begin{equation}\label{eqn-permuton-21-a}
			2\iint_{\mathbb{C}^2}\mathds{1}_{\left\{A_2^{w}+A_3^{w}<A_2^{\wt{w}}+A_3^{\wt{w}};\, A_1^{w}+A_2^{w}>A_1^{\wt{w}}+A_2^{\wt{w}} \right\}}\mu_h(dw)\mu_h(d\wt{w}).
		\end{equation}
		Using again the definition of space-filling SLE curves given at the end of Section \ref{sect:sle_ig} (recall also Figure~\ref{fig-flowlines}), we observe that \[A_2^{\mathbf{w}}+A_3^{\mathbf{w}}<A_2^{\wt{\mathbf{w}}}+A_3^{\wt{\mathbf{w}}}\qquad \text{and} \qquad A_1^{\mathbf{w}}+A_2^{\mathbf{w}}>A_1^{\wt{\mathbf{w}}}+A_2^{\wt{\mathbf{w}}}\] if and only if $\eta'$ hits the point $\wt{\mathbf{w}}$ after hitting $\mathbf{w}$, and $\eta'_{\theta-\frac{\pi}{2}}$ hits $\wt{\mathbf{w}}$ before hitting  $\mathbf{w}$. This implies that  $\wt{\mathbf{w}}$ falls into the region  between $\eta_{\theta}^{{\mathbf{w}}}$ and $\eta_{W}^{{\mathbf{w}}}$ (i.e.\ the region with quantum area $A_1^{\mathbf{w}}$).  Therefore we conclude the proof by integrating \eqref{eqn-permuton-21-a} over $\wt{w}$.
	\end{proof}
	
	\begin{proof}[Proof of Proposition~\ref{prop-expected-occ-21}]
		By Lemma~\ref{lm-expected-21}, it suffices to show that $\bbE \left[A_1^{\mathbf{w}}\right] = \frac{\pi-2\theta}{4\pi}$. By the +rerooting invariance stated in Proposition~\ref{prop-qs-resampling}, the quantum area $A_1^{\mathbf{w}}$ has the same distribution as $A_1:=A_1^{0}$. It remains to prove that
		\begin{equation}\label{eqn-expected-occ-21}
			\bbE [A_1] = \frac{\pi-2\theta}{4\pi}.
		\end{equation}
		First assume that $\theta = \theta_{m,n}:=\left(\frac{m}{n}-\frac{1}2\right)\pi,$ where $0\le m\le 2n-1$ are integers. By Theorem~\ref{thm-sph-welding}, the flow lines $\eta_{\theta_{0,n}}$, ..., $\eta_{\theta_{2n-1,n}}$ of $\wh{h}$, with angle $\theta_{0,n}, ..., \theta_{2n-1,n}$, cut the whole sphere into $2n$ quantum disks {each} of weight $\frac{4-\gamma^2}{2n}$. We denote the quantum area of the region between $\eta_{\frac{i\pi}{n}}$ and $\eta_{\frac{(i+1)\pi}{n}}$ by $A_{i,n}$, for $i = 0, ..., 2n-1$ (with the convention that $\eta_{2\pi} = \eta_0$). Since the total area {is} 
		1, 
		by symmetry $\bbE  [A_{i,n}] = \frac{1}{2n}$. Then from linearity of expectation, we see that 
		\begin{equation*}
			\bbE [A_1] = \bbE[A_{m, n}]+...+\bbE[A_{n-1,n}] = \frac{n-m}{2n} = \frac{\pi-2\theta}{4\pi},
		\end{equation*}
		which verifies \eqref{eqn-expected-occ-21} for $\theta\in\mathbb{Q}$. Now for general $\theta$, we observe that by flow line monotonicity (see \cite[Theorem 1.5]{MS16a} and \cite[Theorem 1.9]{MS17}) the flow lines starting from the same point with different angles will not cross each other, and it follows that the expression $\bbE [A_1]$ is decreasing in $\theta$. Then it is clear that \eqref{eqn-expected-occ-21} holds for any $\theta\in [-\frac{\pi}2,\frac\pi2]$, which concludes the proof.
	\end{proof}
	
	\begin{remark}
		Although the quantity $A_1^{\mathbf{w}}$ appearing in Lemma~\ref{lm-expected-21} is generally tractable using the rerooting invariance for marked points of quantum spheres, its conditional expectation given $(h,\wh{h})$ would be more tricky. In particular, the rerooting invariance is a key technical step in the proof.
	\end{remark}
	
	\begin{remark}
		The proof of Lemma~\ref{lm-expected-21} does not only give the expectation of $\wt{\occ}(21,\mu_{\rho,q})$ as done in Proposition~\ref{prop-expected-occ-21}; it also gives a description of the law of this random variable in terms of formulas for LQG surfaces. The law can be expressed in terms of the function $\theta_\gamma(q)$, the function $p_W$ from Section~\ref{sec:density-skew-Bp}, and counterparts of Proposition~\ref{prop-disk-excursion} for disks of other weights.
	\end{remark}
	
	\begin{remark}
		Also $\wt{\occ}(\pi,\mu_{\rho,q})$ for other choices of $\pi$ can be expressed in terms of the LQG area of certain domains cut out by flow lines started from a fixed number of points sampled from the LQG area measure. However, for general patterns $\pi$, the expectation of the relevant LQG area is not as straightforward to compute, and the intersection pattern of the flow lines is more involved. We therefore do not pursue more general formulas. However, we do prove in Section~\ref{sec:positivity} that we have a.s.\ positivity of $\wt{\occ}(\pi,\mu_{\rho,q})$ for all (standard) patterns $\pi$.
	\end{remark}
	
	\section{Positivity of pattern densities of the skew Brownian permuton}
	\label{sec:positivity}
	
	The goal of this section is to prove Theorem~\ref{thm:positivity}. Our proof will use the theory of imaginary geometry from \cite{MS16a,MS17} (see also \cite{dub09a}). Following these papers, let 
	\[
	\kappa\in(0,4),\qquad
	\chi = \frac{2}{\sqrt\kappa}-\frac{\sqrt\kappa}{2} ,\qquad \lambda'=\frac{\pi\sqrt\kappa}{4}.
	\]
	Recall from Section~\ref{sect:sle_ig} that if $\wh h$ is a whole-plane GFF (defined modulo a global additive integer multiple of $2\pi\chi$), $\theta\in\R$ and $z\in\C$, then we can define the flow line $\eta^z_\theta$ of $e^{i(\wh h/\chi+\theta)}$ from $z$ to $\infty$ of angle $\theta$, which is an SLE$_{\kappa}(2-\kappa)$ curve. We refer to flow lines of angle $\theta=0$ (resp.\ $\theta=\pi/2$) as north-going (resp.\ west-going). As explained in \cite{MS16a,MS17}, one can also define flow lines if one has a GFF in a subset $D\subseteq \C$, including flow lines which start from a point on the domain boundary $\partial D$ for appropriate boundary conditions.
	
	Let $z\in\C$, $\theta\in\R$, and $\wh h$ be as in the previous paragraph, and let $\tau$ be a stopping time for $\eta^z_\theta$. 
	Conditioned on $\eta^z_\theta|_{[0,\tau]}$, the conditional law of $\wh h$ is given by a zero boundary GFF in $\C\setminus \eta^z_\theta([0,\tau])$, plus the function $f$ which is harmonic in this domain and has boundary conditions along $\eta^z_\theta([0,\tau])$ given by $\chi$ times the winding of the curve plus $-\lambda'-\theta\chi$ (resp.\ $\lambda'-\theta\chi$) on the left (resp.\ right) side, where the winding is relative to a segment of the curve going straight upwards. We refer to \cite[Section 1]{MS17} for the precise description of this conditional law and in particular to \cite[Figures 1.9 and 1.10]{MS17} for more details on boundary conditions and the concept of winding. The analogous statement holds if we consider flow lines of a GFF $\wh h$ in a subset $D\subset\C$.
	
	As mentioned above, the whole-plane GFF is typically only defined modulo a global additive integer multiple of $2\pi\chi$ in the setting of imaginary geometry. Throughout the remainder of this section we will fix this additive constant by requiring that the average of the GFF on the unit circle is between 0 and $2\pi\chi$. Fixing the additive constant is convenient when considering the height difference between two interacting flow lines and when we want to describe the absolute boundary values along each flow line. 
	
	To simplify notation we will focus on the case $q=1/2$ of Theorem~\ref{thm:positivity} throughout the section, and then afterwards explain the necessary (very minor) modification which is needed for general $q\in(0,1)$. The key input to the proof of Theorem~\ref{thm:positivity} is the following lemma. See Figure~\ref{fig-flowlines-perm} for an illustration.
	
	\begin{figure}[ht]
		\centering
		\includegraphics[scale=1]{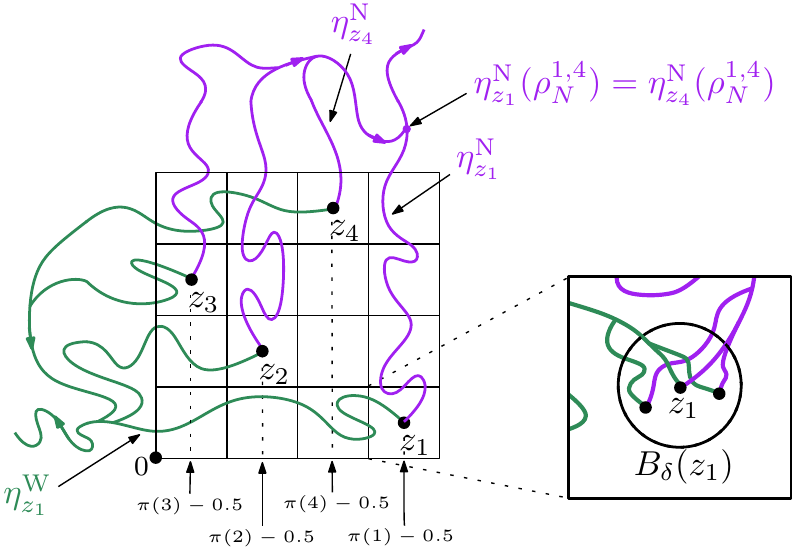}
		\caption{Illustration of (i)-(iii) in Lemma~\ref{prop-pos-dens-key0} for $\pi=4213$, i.e. $\pi^{-1}=3241$. Recalling the explanations from Section~\ref{sect:sle_ig} (see also Figure~\ref{flow-lines}), we have that if the merging structure of the west (resp.\ north) flow line is as the one in the picture, then the ordering in which the points $z_1,z_2,z_3,z_4$ are visited by the space-filling SLE$_{16/\kappa}$ counterflow line $\eta'$ (resp.\ $\eta'_{-\frac\pi2}$) is  $z_1,z_2,z_3,z_4$ (resp.\ $z_3,z_2,z_4,z_1$). Indeed, by the constriction of counterflow lines, $\eta'$ (resp.\ $\eta'_{-\frac\pi2}$) visits the points $z_1,z_2,z_3,z_4$ in the same order as the contour of the green (resp. purple) tree oriented from south to north (resp.\ from west to east). Note also that this implies that $\Perm_k\Big(\left((\eta')^{-1}(z_i),(\eta'_{-\frac{\pi}{2}})^{-1}(z_i)\right)_{i\in[4]}\Big)=\pi$. On the right, we zoom inside the square $(3,4)\times(0,1)$ and display condition (iii) in Lemma~\ref{prop-pos-dens-key0}. Note that the flow lines started from points (we displayed two of them) inside the ball $B_{\delta}(z_1)$ merge into the flow lines started from $z_1$ before leaving the square.}
		\label{fig-flowlines-perm}
	\end{figure}
	
	\begin{lemma}
		Let $k\in\{2,3,\dots \}$ and let $\pi\in\mcl S_k$ be a (standard) pattern of size $k$. For $j=1,\dots,k$ 
		let $z_j= (\pi(j)-0.5)+(j-0.5)\mathbf{i}\in\C$ and  let $\rho_{\op{N}}^{j,i}$ (resp.\ $\rho_{\op{W}}^{j,i}$) be the time at which $\eta^{z_j}_{\op{N}}$ (resp.\ $\eta^{z_j}_{\op{W}}$) merges into $\eta^{z_i}_{\op{N}}$ (resp.\ $\eta^{z_i}_{\op{W}}$) for $i\in\{1,\dots,k \}\setminus\{j \}$. Then there is a $\delta\in(0,1/10)$ such that with strictly positive probability the following events occur for all $i,j=1,\dots,k$, $i\neq j$.
		\begin{itemize}
			\item[(i)] $\eta^{z_j}_{\op{N}}$ merges into $\eta^{z_i}_{\op{N}}$ on its left side if and only if $\pi(j)<\pi(i)$; these two flow lines merge before leaving the ball $B_{4k}(0)$; and $\eta^{z_j}_{\op{N}}(\rho_{\op{N}}^{j,i})\not\in  (\pi(j)-1,\pi(j))\times(j-1,j)$. 
			\item[(ii)] $\eta^{z_j}_{\op{W}}$ merges into $\eta^{z_i}_{\op{W}}$ on its left side if and only if $j<i$; these two flow lines merge before leaving the ball $B_{4k}(0)$; and $\eta^{z_j}_{\op{W}}(\rho_{\op{W}}^{j,i})\not\in  (\pi(j)-1,\pi(j))\times(j-1,j)$.
			
			\item[(iii)] For all $z\in B_\delta(z_j)$ 
			the flow line $\eta^z_{\op{N}}$ (resp.\ $\eta^z_{\op{W}}$) merges into $\eta^{z_j}_{\op{N}}$ (resp.\ $\eta^{z_j}_{\op{W}}$) before leaving the square $(\pi(j)-1,\pi(j))\times(j-1,j)$.
		\end{itemize}
		\label{prop-pos-dens-key0}
	\end{lemma}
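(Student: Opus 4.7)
The proof strategy is to show that, with positive probability, the flow lines lie in small tubular neighborhoods of carefully chosen deterministic simple target curves, and then to deduce (i)--(iii) from this approximation. The central input is \cite[Lemma 3.8]{MS17}, which asserts that a simple $\mathrm{SLE}_\kappa(\underline\rho)$-type curve stays within any prescribed tubular neighborhood of any given continuous simple target curve with strictly positive probability. I also use the Markov property of the whole-plane GFF (so that, conditional on already-drawn flow lines, the remaining flow lines behave as $\mathrm{SLE}_\kappa(\underline\rho)$-type curves in the complement with explicit boundary data) and the flow-line merging theorem \cite[Theorem 1.9]{MS17}.

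\textbf{Construction of target curves.} Let $j_1,\dots,j_k$ be the permutation with $\pi(j_1)<\cdots<\pi(j_k)$, i.e.\ $j_l=\pi^{-1}(l)$. I define deterministic simple curves $\gamma_{j_l}^{\op{N}}:[0,\infty)\to\C$ from $z_{j_l}$ to $\infty$ as follows: $\gamma_{j_k}^{\op{N}}$ travels straight upward to $\infty$; and for $l<k$, $\gamma_{j_l}^{\op{N}}$ first exits its enclosing square $(\pi(j_l)-1,\pi(j_l))\times(j_l-1,j_l)$ leftward, then approaches and merges into $\gamma_{j_{l+1}}^{\op{N}}$ from the left side, entirely inside $B_{2k}(0)$. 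By routing these curves through disjoint corridors so that $\gamma_{j_l}^{\op{N}}$ lies to the left of $\gamma_{j_m}^{\op{N}}$ for $l<m$ until the two curves merge, the combinatorial condition (i) is satisfied at the target level. The west-going targets $\gamma_j^{\op{W}}$ are built analogously using the identity order on $\{1,\dots,k\}$, and the two families can be arranged so as to only bounce off one another rather than cross.

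\textbf{Flow-line approximation and condition (iii).} Fix a small $\varepsilon>0$ and draw the flow lines sequentially in the order $\eta^{z_{j_k}}_{\op{N}},\dots,\eta^{z_{j_1}}_{\op{N}},\eta^{z_k}_{\op{W}},\dots,\eta^{z_1}_{\op{W}}$. By the Markov property of $\wh h$, conditional on the previously drawn flow lines, the next flow line is an $\mathrm{SLE}_\kappa(\underline\rho)$-type curve in the complementary domain. Applying \cite[Lemma 3.8]{MS17} at each step, the next flow line stays within the $\varepsilon$-tubular neighborhood of its target until it meets a previously drawn same-angle flow line, at which point by \cite[Theorem 1.9]{MS17} it merges (rather than crosses) into it, on the side from which the target approaches; this yields (i) and (ii). Cross-angle interactions between $\op{N}$ and $\op{W}$ flow lines are controlled by the imaginary geometry interaction rules \cite[Theorem 1.7]{MS17} and remain compatible with the bouncing arrangement of the targets. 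For (iii), pick $\delta<\varepsilon/10$; on the event above, for any $z\in B_\delta(z_j)$ the flow line $\eta^z_{\op{N}}$ almost surely merges into $\eta^{z_j}_{\op{N}}$ by \cite[Theorem 1.9]{MS17}, and a local further application of \cite[Lemma 3.8]{MS17}, together with a continuity-in-starting-point estimate, forces this merging to occur before $\eta^z_{\op{N}}$ exits the enclosing square of $z_j$, provided $\varepsilon$ is small enough; the argument for $\eta^z_{\op{W}}$ is identical.

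\textbf{Main obstacle.} The principal difficulty is that a tubular-neighborhood approximation alone does not determine the \emph{side} (left vs.\ right) on which two same-angle flow lines merge; thus the target curves must be routed very deliberately so as to approach each other on the prescribed side, and the imaginary geometry tools (especially \cite[Theorems 1.7 and 1.9]{MS17}) must then be invoked to transfer this target-level data to the flow-line level. A secondary technical point is to propagate the positive-probability lower bound through the sequential conditioning, which relies on the uniformity of \cite[Lemma 3.8]{MS17} in the (random) boundary data produced by the previously drawn curves.
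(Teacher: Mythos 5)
Your overall strategy (steer the flow lines along deterministic target curves using \cite[Lemma 3.8]{MS17} with positive probability, then invoke the imaginary-geometry interaction rules) is the same family of argument the paper uses via Lemmas~\ref{prop-channel-hit-interior}--\ref{prop-cross-curves}, but two steps would fail as written. First, the claim that the north-going and west-going families ``can be arranged so as to only bounce off one another rather than cross'' is wrong: whenever $i<j$ and $\pi(i)<\pi(j)$, the west-going line from $z_j$ has to get past the north-going line from $z_i$, which (on the event you construct) runs through an entire column to the left of $z_j$ at the height of $z_j$'s row, so crossings are topologically forced; moreover, whether a W-line crosses or bounces off an N-line is not a matter of routing targets but is dictated by the height difference at the hit, which equals $-\pi\chi/2$ only modulo $2\pi\chi$ once the additive constant of $\wh h$ is fixed. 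This is precisely why the paper's Definition~\ref{def-nice} records the \emph{absolute} boundary data (the winding of each flow line around its starting point, compensated by choosing targets that wind appropriately), why it imposes the no top-bottom-crossing condition (iii) (a flow line confined to a thin tube can still weave, and a later W-line, which can never cross back after crossing once by Lemma~\ref{prop-hit-merge-angle}, would get trapped between strands), and why Lemma~\ref{prop-cross-curves} is devoted to making the W-lines cross the N-lines via the boundary-hitting Lemma~\ref{prop-channel-hit-bdy} with the height difference in the admissible range. A tube-following event alone does not control these multiples of $2\pi\chi$, so your ``transfer of target-level data to the flow-line level'' has no mechanism behind it; relatedly, the ``uniformity in the boundary data'' you invoke for the sequential conditioning is not off-the-shelf and is handled in the paper by an absolute-continuity argument for the field away from previously drawn lines plus the boundary-case lemma.

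Second, your treatment of (iii) is a genuine gap. The event concerns \emph{all} (uncountably many) flow lines started from points of $B_\delta(z_j)$, and their rapid absorption into $\eta^{z_j}_{\op{N}}$ is not something one can steer with positive probability by another application of \cite[Lemma 3.8]{MS17}; nor are flow lines continuous in their starting point in any sense strong enough to support the ``continuity-in-starting-point estimate'' with a deterministic $\delta<\varepsilon/10$. The paper's route is different: (iii) holds a.s.\ for a \emph{random} $\delta>0$, by continuity of the space-filling curve $\eta'_{-\frac{\pi}{2}}$ --- one picks a time interval $I$ with $\eta'_{-\frac{\pi}{2}}(I)$ contained in the square and containing $z_j$ in its interior, so every flow line started in $B_\delta(z_j)\subset\eta'_{-\frac{\pi}{2}}(I)$ merges into $\eta^{z_j}_{\op{N}}$ before leaving the square --- and then one fixes $\delta$ so small that this event has probability at least $1-s/2$, where $s>0$ is the probability of the steering event, concluding by a union bound. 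Without some argument of this type (or another a.s.\ local-absorption statement), your choice of a fixed $\delta$ tied to $\varepsilon$ does not yield (iii).
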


	Before proceeding to the proof of Lemma~\ref{prop-pos-dens-key0}, we give the proof of Theorem~\ref{thm:positivity} conditioned on this result. 
	
	\begin{proof}[Proof of Theorem~\ref{thm:positivity} for $q=1/2$ given Lemma~\ref{prop-pos-dens-key0}]
		For $m\in\N\cup\{0 \}$ and $\wh h$ a whole-plane GFF as above, let $E(m,\wh h)$ be the event (i)-(ii)-(iii) of Lemma~\ref{prop-pos-dens-key0}, but with all points scaled by $2^{-m}$, i.e., we consider the setting of the lemma under the image of the map $z\mapsto 2^{-m}z$ (equivalently,  (i)-(ii)-(iii) occur for the field $\wh h(2^m\cdot)$).
		By Lemma~\ref{prop-pos-dens-key0} we have $s:=\P[E(0,\wh h)]>0$, and by scale invariance of the GFF we have $\P[E(m,\wh h)]=s$ for all $m\in\N$. Since the occurrence of $E(m,\wh h)$ is determined by $\wh h|_{B_{2^{-m+2}k}(0)}$, 
		we get by tail triviality 
		of  $(\wh h|_{B_{2^{-m+2}k}(0)}\,:\,m\in\N)$ 
		{(see e.g.\ \cite[Lemma 2.2]{hs-euclidean-mating})}
		that $E(m,\wh h)$ occurs for infinitely many $m$ a.s. In particular, we can a.s.\ find some (random) $m_0\in\N$ such that  $E(m_0,\wh h)$ occurs. 
		
		Recall that $\eta'$ and $\eta'_{-\frac\pi2}$ denotes the angle 0 and the angle $-\frac{\pi}{2}$ space-filling SLE$_{16/\kappa}$ counterflow lines constructed from $\wh h$.
		By the definition of space-filling SLE$_{16/\kappa}$ counterflow line as given at the end of Section \ref{sect:sle_ig} (see also Figure~\ref{fig-flowlines-perm}), if (i) (resp.\ (ii)) occurs then the ordering in which the points $z_1,\dots,z_k$ are visited by $\eta'$ (resp.\ $\eta'_{-\frac\pi2}$) is $z_1,\dots,z_k$ (resp.\ $z_{\pi^{-1}(1)},\dots,z_{\pi^{-1}(k)}$). Furthermore, by the same argument, if (i) (resp.\ (ii)) occurs in the setting where points are rescaled by $2^{-m_0}$ then the ordering in which the points $2^{-m_0}z_1,\dots,2^{-m_0}z_k$ are visited by $\eta'$ (resp.\ $\eta'_{-\frac\pi2}$) is $2^{-m_0}z_1,\dots,2^{-m_0}z_k$ (resp.\ $2^{-m_0}z_{\pi^{-1}(1)},\dots,2^{-m_0}z_{\pi^{-1}(k)}$). 
		
		Finally, also by the definition of space-filling SLE$_{16/\kappa}$, if (iii) occurs in addition to (i) and (ii), then every points $z\in B_{2^{-m_0}\delta}(2^{-m_0}z_i)$ and $w\in B_{2^{-m_0}\delta}(2^{-m_0}z_j)$ are visited in the same relative ordering as $2^{-m_0}z_i$ and $2^{-m_0}z_j$ for both $\eta'$ and $\eta'_{-\frac\pi2}$. Indeed, for $z\in B_{2^{-m_0}\delta}(2^{-m_0}z_i)$ and $w\in B_{2^{-m_0}\delta}(2^{-m_0}z_j)$, if (i) and (iii) occur, we have that $\eta_{\op{N}}^{z}$ merges into $\eta_{\op{N}}^{w}$ on its left side if and only if $\eta_{\op{N}}^{z_i}$ merges into $\eta_{\op{N}}^{z_j}$ on its left side, and the corresponding statements hold with (ii) and W instead of (i) and N, respectively. 
		
		We now consider a unit-area quantum sphere $(\wh{\mathbb{C}}, h, 0, \infty)$ independent of $\wh h$ and the skew Brownian permuton $\mu_{\rho,q}$ constructed form the tuple $(h, \eta', \eta'_{-\frac\pi2})$ as in Theorem~\ref{thm:sbpfromlqg}.
		Note that by \eqref{eq:occ_permuton} and Theorem~\ref{thm:sbpfromlqg}, a.s.
		\begin{multline*}
			\pocc(\pi, \mu_{\rho,q})
			=
			\int_{[0,1]^{2k}}
			\mathds{1}_{\left\{\Perm_k((x_i,y_i)_{i\in[k]})=\pi\right\}}
			\prod_{i=1}^k \mu_{\rho,q}(dx_i,dy_i)\\
			=
			\int_{\C^k}
			\mathds{1}_{\left\{\Perm_k\left(\left(
				(\eta')^{-1}(w_i),
				(\eta'_{-\frac{\pi}{2}})^{-1}(w_i)
				\right)_{i\in[k]}\right)=\pi\right\}}
			\prod_{i=1}^k \mu_{h}(dw_i),
		\end{multline*}
		where $\mu_h$ is the $\gamma$-LQG area measure associated with $(\wh{\mathbb{C}}, h, 0, \infty)$.
		Hence, if (i), (ii), and (iii) occur (in the setting where all points are rescaled by $2^{-m_0}$) then it a.s.\  holds 
		\begin{equation}\label{eq:low_bound}
			\pocc(\pi, \mu_{\rho,q})
			\geq 
			\int_{B_{2^{-m_0}\delta}(2^{-m_0}z_1)}\dots
			\int_{B_{2^{-m_0}\delta}(2^{-m_0}z_k)} 
			\prod_{i=1}^k \mu_h(dw_i).	
		\end{equation}
		The latter bound concludes the proof since the balls $B_{2^{-m_0}\delta}(2^{-m_0}z_j)$ for $j=1,\dots,k$ a.s.\ have positive $\mu_h$ Liouville quantum area measure. 
	\end{proof}
	
	\begin{proof}[Proof of Theorem~\ref{thm:positivity} for general $q\in(0,1)$]
		All steps of the proof carry through precisely as in the case $q=1/2$, except that we consider $\eta_\theta^z$ instead of $\eta_{\op{N}}^z$ throughout the proof for $\theta$ such that $q=q_\gamma(\theta)$.
	\end{proof}
	
	The rest of this section is devoted to the proof of Lemma~\ref{prop-pos-dens-key0}. We will in fact instead prove Lemma~\ref{prop-pos-dens-key} below, which immediately implies Lemma~\ref{prop-pos-dens-key0}. In order to state Lemma~\ref{prop-pos-dens-key}, we first need the following definition.
	\begin{definition}
		Let $B\subset\C$ be a set of the form $(a,a+s)\times(b,b+s)$ for $a,b\in\R$ and $s>0$,
		denote its top (resp.\ bottom, left, right) boundary arc by $\partial_{\op{T}} B$ (resp.\ $\partial_{\op{B}} B,\partial_{\op{L}} B,\partial_{\op{R}} B$), and let $z\not\in B$. We say that $\eta^z_{\op{N}}$ \emph{crosses $B$ nicely in north direction} if the following criteria are satisfied, where $\tau=\inf\{t\geq 0\,:\,\eta^z_{\op{N}}(t)\in B \}$ is the first time at which $\eta^z_{\op{N}}$ hits $B$.
		\begin{itemize}
			\item[(i)] $\tau<\infty$ and $\eta^z_{\op{N}}(\tau)\in\partial_{\op{B}} B$.
			\item[(ii)] Let $\wt\eta$ be a path which agrees with $\eta^z_{\op{N}}$ until time $\tau$ and which parametrizes a vertical line segment in $B$ during $[\tau,\tau+1]$. Let $f$ be the function which is harmonic in $\C\setminus\wt\eta([0,\tau+1])$, is equal to $-\lambda'$ (resp.\ $\lambda'$) on the left (resp.\ right) side of the vertical segment $\wt\eta([\tau,\tau+1])$, and which otherwise along $\wt\eta$ changes by $\chi$ times the winding of $\wt\eta$. We require that the boundary conditions of $\wh h$ along $\eta^z_{\op{N}}|_{[0,\tau]}$ are as given by $f$. 
			\item[(iii)] $\eta^z_{\op{N}}$ does not have any top-bottom crossings, 
			i.e., if $\tau'=\inf\{t\geq 0\,:\,\eta^z_{\op{N}}(t) \in \partial_{\op{T}} B \}$ then $\tau'<\infty$ and  $\eta^z_{\op{N}}([\tau',\infty))\cap \partial_{\op{B}} B=\emptyset$. 
		\end{itemize}
		We say that $\eta^z_{\op{W}}$ \emph{crosses $B$ nicely in west direction} if the following criteria are satisfied, where $\tau''=\inf\{t\geq 0\,:\,\eta^z_{\op{W}}(t)\in B \}$ is the first time at which $\eta^z_{\op{W}}$ hits $B$.
		\begin{itemize}
			\item[(i')] $\tau''<\infty$ and $\eta^z_{\op{W}}(\tau'')\in\partial_{\op{R}} B$.
			\item[(ii')] Let $\wt\eta$ be a path which agrees with $\eta^z_{\op{W}}$ until time $\tau''$ and which parametrizes a horizontal line segment in $B$ during $[\tau'',\tau''+1]$. Let $f$ be the function which is harmonic in $\C\setminus\wt\eta([0,\tau+1])$, is equal to $-\lambda'-\pi\chi/2$ (resp.\ $\lambda'-\pi\chi/2$) on the bottom (resp.\ top)  side of the horizontal segment $\wt\eta([\tau'',\tau''+1])$, and which otherwise along $\wt\eta$ changes by $\chi$ times the winding of $\wt\eta$. We require that the boundary conditions of $\wh h$ along $\eta^z_{\op{W}}|_{[0,\tau'']}$ are as given by $f$.
		\end{itemize}
		\label{def-nice}
	\end{definition}
	Notice that the requirements in (ii) and (ii') above are automatically satisfied if we are only interested in the boundary conditions of the curve modulo a global additive integer multiple of $2\pi\chi$, but that these requirements are non-trivial in our setting (since we fixed the additive constant of the field) and depend on the winding of the flow lines about their starting point. For example, suppose $\eta_{\op{N}}^z$ would make an additional counterclockwise loop around $z$ before entering $B$; then its boundary conditions when crossing $B$ would increase by $2\pi\chi$, and we need to keep track of these multiples of $2\pi\chi$ when checking whether (ii) occurs. It is important to keep track of these multiples of $2\pi\chi$ when studying the interaction of two flow lines, e.g.\ in Lemmas \ref{prop-channel-hit-bdy} and \ref{prop-hit-merge-angle} below.
	
	Also notice that we do not require the counterpart of (iii) for west-going flow lines. This is due to the specific argument we use below where we first sample north-going flow lines and then sample the west-going flow lines conditioned on the realization of the north-going flow lines, and property (iii) is introduced in order to guarantee that it is possible to sample well-behaved west-going flow lines conditioned on the realization of the north-going flow lines.
	
	\begin{figure}[ht]
		\centering
		\includegraphics[scale=1]{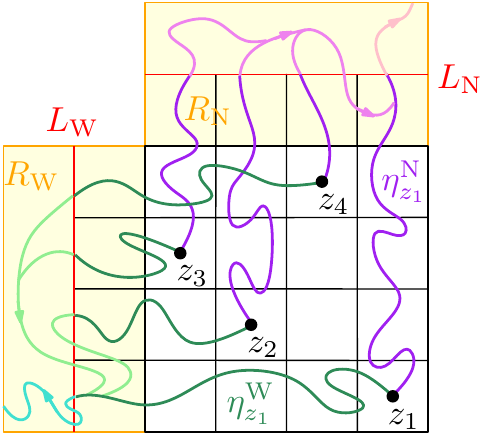}
		\caption{Illustration of (i)-(iv) in Lemma~\ref{prop-pos-dens-key} (or (i)-(ii) in Lemma~\ref{prop-pos-dens-key0}) for $\pi=4213$, i.e. $\pi^{-1}=3241$. The green curves represent west-going flow lines and the purple/pink curves represent north-going flow lines. The flow lines are shown in a different color before and after the times $\tau^{\op{N}}_j,\tau^{\op{W}}_j$.}
		\label{fig-pos-dens-key}
	\end{figure}

	\begin{lemma}
		Let $k\in\{2,3,\dots \}$ and let $\pi\in\mcl S_k$ be a (standard) pattern of size $k$. For $j=1,\dots,k$ 
		let $z_j= (\pi(j)-0.5)+(j-0.5)\mathbf{i}\in\C$ and make the following definitions (see Figure~\ref{fig-pos-dens-key}):
		\begin{align*}
			&L_{\op{N}}=[0,k]\times\{k+1 \},\qquad
			R_{\op{N}}=[0,k]\times[k,k+2], \qquad
			\tau_{\op{N}}^j=\inf\{t\geq 0\,:\, \eta^{z_j}_{\op{N}}(t) \in L_{\op{N}} \},\\
			&L_{\op{W}}=\{-1 \}\times[0,k], \qquad\,\,\,\,
			R_{\op{W}}=[-2,0]\times[0,k],\qquad\,\,\,\,\,\,
			\tau_{\op{W}}^j=\inf\{t\geq 0\,:\, \eta^{z_j}_{\op{W}}(t) \in L_{\op{W}} \}.
		\end{align*}
		Also let $\rho_{\op{N}}^{j,i}$ (resp.\ $\rho_{\op{W}}^{j,i}$) be the time at which $\eta^{z_j}_{\op{N}}$ (resp.\ $\eta^{z_j}_{\op{W}}$) merges into $\eta^{z_i}_{\op{N}}$ (resp.\ $\eta^{z_i}_{\op{W}}$) for $i\in\{1,\dots,k \}\setminus\{j \}$. Then there is a $\delta\in(0,1/10)$ such that with strictly positive probability the following events occur for all $i,j=1,\dots,k$, $i\neq j$.
		\begin{itemize}
			\item[(i)] The flow line $\eta^{z_j}_{\op{N}}$ stays inside $(\pi(j)-1,\pi(j))\times(j-1,k+1)$ until time $\tau_{\op{N}}^j<\infty$, and $\eta^{z_j}_{\op{N}}|_{[0,\tau_{\op{N}}^j]}$ crosses $(\pi(j)-1,\pi(j))\times(m-1,m)$ nicely in north direction for $m=j+1,\dots,k$.
			\item[(ii)] The flow line $\eta^{z_j}_{\op{W}}$ stays inside $(-1,\pi(j))\times(j-1,j)$ until time $\tau_{\op{W}}^j<\infty$, and $\eta^{z_j}_{\op{W}}|_{[0,\tau_{\op{W}}^j]}$ crosses $(m-1,m)\times(j-1,j)$ nicely in west direction for $m=1,\dots,\pi(j)-1$.
			\item[(iii)] $\eta^{z_j}_{\op{N}}$ merges into $\eta^{z_i}_{\op{N}}$ on its left side if and only if $\pi(j)<\pi(i)$, and $\eta^{z_j}_{\op{N}}([\tau^j_{\op{N}},\rho_{\op{N}}^{j,i}])\subset R_{\op{N}}$. 
			\item[(iv)] $\eta^{z_j}_{\op{W}}$ merges into $\eta^{z_i}_{\op{W}}$ on its left side if and only if $j<i$, and $\eta^{z_j}_{\op{W}}([\tau_{\op{W}}^j,\rho_{\op{W}}^{j,i}])\subset R_{\op{W}}$.
			\item[(v)] For all $z\in B_\delta(z_j)$ 
			the flow line $\eta^z_{\op{N}}$ (resp.\ $\eta^z_{\op{W}}$) merges into $\eta^{z_j}_{\op{N}}$ (resp.\ $\eta^{z_j}_{\op{W}}$) before leaving the square $(\pi(j)-1,\pi(j))\times(j-1,j)$.
		\end{itemize}
		\label{prop-pos-dens-key}
	\end{lemma}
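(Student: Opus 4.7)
My plan is to prove the lemma by constructing the $2k$ flow lines iteratively, applying at each step the key input \cite[Lemma 3.8]{MS17}, which asserts that a flow line can approximate any prescribed continuous simple curve (conditional on the previously sampled flow lines) with positive probability. I will first sample the north-going flow lines $\eta^{z_1}_{\op{N}},\dots,\eta^{z_k}_{\op{N}}$ in an order compatible with $\pi$, and then conditionally sample the west-going flow lines $\eta^{z_1}_{\op{W}},\dots,\eta^{z_k}_{\op{W}}$. Since the positive conditional probabilities at each step can be combined, the joint probability of (i)--(v) will be strictly positive.

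\textbf{North-going phase.} Let $j_1,\dots,j_k$ be the rearrangement of $\{1,\dots,k\}$ with $\pi(j_1)<\dots<\pi(j_k)$, and sample $\eta^{z_{j_1}}_{\op{N}},\dots,\eta^{z_{j_k}}_{\op{N}}$ in this order. For each $m$, I define a target continuous simple curve $\gamma_m$ that starts at $z_{j_m}$, travels straight upward inside the strip $(\pi(j_m)-1,\pi(j_m))\times(j_m-1,k+1)$, crossing each unit box $(\pi(j_m)-1,\pi(j_m))\times(\ell-1,\ell)$ nicely in the sense of Definition~\ref{def-nice}, then enters $R_{\op{N}}$ and merges into (an appropriate neighborhood of) $\gamma_{m+1}$ from the left side inside $R_{\op{N}}$. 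By the conformal Markov property of imaginary geometry, conditional on the previously sampled flow lines, $\eta^{z_{j_m}}_{\op{N}}$ is a flow line in the complementary domain with explicit boundary data, and \cite[Lemma 3.8]{MS17} gives positive conditional probability that it stays in an arbitrarily thin tubular neighborhood of $\gamma_m$ until merging. Choosing the tube narrow enough forces (i) and (iii). The absolute boundary values in item (ii) of Definition~\ref{def-nice} follow because the target $\gamma_m$ has no spurious winding around $z_{j_m}$, so no extra multiples of $2\pi\chi$ can accumulate.

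\textbf{West-going phase.} Conditional on the north-going phase, the GFF in $\widehat{\mathbb{C}}\setminus\bigcup_j \eta^{z_j}_{\op{N}}([0,\tau^j_{\op{N}}])$ has known boundary data with absolute values specified (not merely modulo $2\pi\chi$), thanks to part (ii) of Definition~\ref{def-nice}. I repeat the iterative approximation for $\eta^{z_{i_1}}_{\op{W}},\dots,\eta^{z_{i_k}}_{\op{W}}$ in an order compatible with (iv), choosing target curves that travel west from $z_{i_m}$ through the strip $(-1,\pi(i_m))\times(i_m-1,i_m)$ crossing each unit box nicely, and then merge inside $R_{\op{W}}$ in the required order. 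Each west-going target must cross several north-going flow lines, which is allowed since flow lines of different angles can cross each other (\cite[Theorem 1.5]{MS16a}, \cite[Theorem 1.7]{MS17}); the boundary data along the north-going curves, fixed in the previous phase, is compatible with these crossings. Another application of \cite[Lemma 3.8]{MS17} in each complementary subdomain yields (ii) and (iv) with positive conditional probability.

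\textbf{Property (v) and main obstacle.} On the positive-probability event that (i)--(iv) hold, each $\eta^{z_j}_{\op{N}}$ and $\eta^{z_j}_{\op{W}}$ spends strictly positive initial time inside the open unit square $(\pi(j)-1,\pi(j))\times(j-1,j)$ containing $z_j$. Continuity of flow lines in the starting point, together with the a.s.\ merging of same-angle flow lines from nearby points \cite[Theorem 1.9]{MS17}, produces a random $\delta_j>0$ such that every $\eta^z_{\op{N}}$ and $\eta^z_{\op{W}}$ with $z\in B_{\delta_j}(z_j)$ merges with $\eta^{z_j}_{\op{N}}$ or $\eta^{z_j}_{\op{W}}$ before exiting that square. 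Fixing a deterministic $\delta\in(0,1/10)$ small enough that the event $\{\min_j\delta_j\geq \delta\}$ still has positive conditional probability completes the proof. The principal technical obstacle I anticipate is carefully managing item (ii) of Definition~\ref{def-nice}, namely propagating the absolute boundary values of the GFF along each flow line so that no extra $2\pi\chi$ increments are picked up from loops around the starting points; this must be built into the target curves and rechecked at each inductive step when the next flow line is sampled in the complementary domain.
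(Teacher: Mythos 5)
For conditions (i)--(iv) your plan is essentially the paper's argument: iteratively apply the positive-probability curve-approximation results from \cite{MS17} (the bulk statement corresponding to Lemma~\ref{prop-channel-hit-interior} and, when the target must hit a previously drawn flow line, the boundary statement Lemma~\ref{prop-channel-hit-bdy}), keep track of the \emph{absolute} boundary data along each flow line so that no spurious multiples of $2\pi\chi$ accumulate (this is exactly item (ii) of Definition~\ref{def-nice}), and then invoke the height-difference criterion of Lemma~\ref{prop-hit-merge-angle}: $\Delta=0$ for same-angle merging in $R_{\op{N}}$, $R_{\op{W}}$, and $\Delta=-\pi\chi/2$ for the west-going lines crossing the north-going ones. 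Two points you gloss over but which the paper needs: merging in $R_{\op{N}}$ does not follow from the tube-approximation lemma alone (you must hit the earlier flow line via the boundary-case lemma and then apply the $\Delta=0$ merging criterion), and item (iii) of Definition~\ref{def-nice} (no top--bottom recrossings) has to be enforced by stopping the flow line inside each box and steering it onward, since the approximation lemma by itself does not prevent oscillation along the reference path; (iii) is what later guarantees the west-going targets can actually reach and cross the unique up-crossing segment of each north-going line.

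The genuine gap is in your treatment of (v). ``Continuity of flow lines in the starting point'' is not an available tool, and \cite[Theorem~1.9]{MS17} only gives that same-angle flow lines from nearby points merge \emph{eventually}; neither statement yields the quantitative claim you need, namely that for every $z$ in a small ball the merger occurs \emph{before the flow line leaves the unit square} $(\pi(j)-1,\pi(j))\times(j-1,j)$, uniformly over the ball. The paper closes this by a different mechanism: it considers the space-filling counterflow line generated by the relevant family of flow lines, uses its continuity to find a time interval $I$ whose filled image lies inside the square and contains $z_j$, uses that $z_j$ is a.s.\ not a double point to get a random $\delta>0$ with $B_\delta(z_j)$ contained in that filled image, and then notes that flow lines started inside the filled segment merge into $\eta^{z_j}$ before exiting it. Without an argument of this type (or some substitute), your random $\delta_j$ is unjustified. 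Your final step --- choosing a deterministic $\delta$ so that $\{\min_j\delta_j\ge\delta\}$ retains positive probability together with (i)--(iv) --- is fine once the a.s.\ existence of the random $\delta_j$ is established, and matches the paper's union-bound conclusion.
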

	
	Note that Lemma~\ref{prop-pos-dens-key} immediately implies Lemma~\ref{prop-pos-dens-key0}.
	
	The next two lemmas say, roughly speaking, that a flow line stays close to any given curve $\gamma$ with positive probability. In the first lemma we consider the flow line until it hits a given curve in the bulk of the domain, while in the second lemma we consider the flow line until it hits the domain boundary. Closely related results are proved in \cite{MS17}. These two results will be stated for flow lines of general angle $\theta\in\BB R$ since they will be applied both to north-going and west-going flow lines, and the result for a general angle is no harder to prove that the result for any fixed angle. See Figure \ref{fig-flowlines-pos2} for an illustration of the following result.
	
	\begin{figure}[ht]
		\centering
		\includegraphics[scale=1.3]{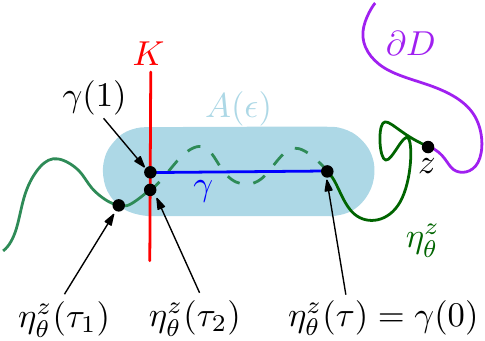}
		\caption{Illustration of the statement of Lemma \ref{prop-channel-hit-interior}. Here we choose the domain $D$ equal to the complement of the purple curve (which is some given curve) and $z\in\ol D$ to be the tip of the purple curve. In green we plotted the flow line $\eta_\theta^z$. The trace of the vertical red segment is the set $K$ and $\gamma$ is the blue horizontal curve, which is parametrized from right to left. The light blue region $A(\ep)$ is the $\ep$-neighborhood of $\gamma([0,1])$.  The figure is illustrating the event $\{\tau_2<\tau_1\}$.}
		\label{fig-flowlines-pos2}
	\end{figure}

	\begin{lemma}[Bulk case]
		Let $\wh h$ be a GFF 
		in a domain $D\subseteq\C$. 
		Let $z\in\ol D$, $\theta\in\R$, and $\eta_\theta^z$ be the flow line of $e^{i(\wh h/\chi+\theta)}$ of angle $\theta$ started from $z$. Let $K\subset D\setminus\{z \}$ be the trace of a simple curve in $D\setminus\{z \}$. Let also $\tau$ be an almost surely strictly positive and finite stopping time for $\eta^z_\theta$ such that $\eta^z_\theta(\tau)\not \in \eta^z_\theta([0,\tau))$, $\eta^z_\theta([0,\tau))\cap K=\emptyset$, and 
		$K$ and $\eta_\theta^z(\tau)$ are in the same connected component of $D\setminus \eta_\theta^z([0,\tau))$
		almost surely.\footnote{If $z\in\partial D$ we require in particular that the boundary conditions of $\wh h$ in $D$ close to $z$ are such that the flow line and an appropriate stopping time $\tau$ exists.} Given $\eta^z_\theta|_{[0,\tau]}$, let $\gamma:[0,1]\to D$ be a simple path satisfying $\gamma(0)=\eta^z_\theta(\tau)$, $\gamma(1)\in K$, and $\gamma((0,1))\cap (\eta^z_\theta([0,\tau))\cup K)=\emptyset$. For fixed $\ep>0$, let $A(\ep)$ denote the $\ep$-neighborhood of $\gamma([0,1])$, and define
		\[
		\tau_1 := \inf\{t\geq\tau\,:\, \eta^z_\theta(t)\not\in A(\ep) \},\qquad 
		\tau_2 = \inf\{t\geq\tau\,:\,\eta^z_\theta(t)\in K \}.
		\]
		Then $\P[ \tau_2<\tau_1\,|\, \eta^z_\theta|_{[0,\tau]} ]>0$. 
		\label{prop-channel-hit-interior}
	\end{lemma}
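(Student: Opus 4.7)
The plan is to apply the flow line approximation result \cite[Lemma 3.8]{MS17}, after first using the domain Markov property to handle the conditioning on $\eta^z_\theta|_{[0,\tau]}$. By the Markov property for flow lines (e.g.\ \cite[Theorem 1.2]{MS17}), conditional on $\eta^z_\theta|_{[0,\tau]}$, the continuation $\eta^z_\theta|_{[\tau,\infty)}$ is a flow line of angle $\theta$ of the conditional field in the connected component $\wt D$ of $D\setminus \eta^z_\theta([0,\tau])$ containing $K$ and $\eta^z_\theta(\tau)$, started from the prime end $\eta^z_\theta(\tau)$. Its marginal law is that of an $\mathrm{SLE}_\kappa(\underline\rho)$-type process in $\wt D$, with force points and weights determined by the conditional boundary data recalled at the start of Section~\ref{sec:positivity}.

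Next I would extend $\gamma$ slightly past $\gamma(1)\in K$ to force a crossing of $K$. Since $K$ is the trace of a simple curve and $\gamma((0,1))\cap K=\emptyset$, one can choose $\delta_0>0$ and a simple curve $\wt\gamma:[0,1+\delta_0]\to \ol{\wt D}$ extending $\gamma$, such that $\wt\gamma|_{(1,1+\delta_0]}$ lies in a component of $\wt D\setminus K$ different from the one containing $\gamma([0,1))$, and such that the $(\ep/2)$-tubular neighborhood of $\wt\gamma([0,1])$ remains contained in $A(\ep)$. This step uses only that $K$ is a one-dimensional simple arc, so that it has two sides locally near $\gamma(1)$.

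Finally, I would invoke \cite[Lemma 3.8]{MS17}, which states that an $\mathrm{SLE}_\kappa(\underline\rho)$-type flow line can approximate any given continuous simple curve in its domain with positive probability. Applied in $\wt D$ to the conditional flow line $\eta^z_\theta|_{[\tau,\infty)}$ and the target curve $\wt\gamma$, this gives that for any $\ep'>0$ there is positive conditional probability that $\eta^z_\theta$ stays within the $\ep'$-tube around $\wt\gamma([0,1+\delta_0])$ until it first enters the $\ep'$-ball about $\wt\gamma(1+\delta_0)$. Choosing $\ep'$ small enough that this tube around $\wt\gamma([0,1])$ lies in $A(\ep)$ and that the $\ep'$-ball about $\wt\gamma(1+\delta_0)$ lies on the opposite side of $K$ from $\eta^z_\theta(\tau)$, the flow line must cross $K$ before it can reach the terminal $\ep'$-ball, and it does so before exiting $A(\ep)$. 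On this positive-probability event we therefore have $\tau_2<\tau_1$, which proves the lemma.

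The main obstacle will be verifying the hypotheses of \cite[Lemma 3.8]{MS17} in this conditional setup, since that lemma is formulated for flow lines with specific types of boundary data and marked points, whereas here the conditional domain $\wt D$ may have a very rough boundary (the SLE-type curve $\eta^z_\theta([0,\tau])$) and somewhat complicated boundary data given by winding plus constants; one has to check that the approximation statement applies uniformly in the realization of $\eta^z_\theta|_{[0,\tau]}$. This reduces to a standard application of absolute continuity and conformal invariance for flow lines, but is the only genuinely technical point.
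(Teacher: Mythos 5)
Your overall strategy---force the continuation of the flow line along a thin tube around $\gamma$, arranged so that it must cross $K$---is the same idea the paper uses, but there is a genuine gap in the way you import \cite[Lemma 3.8]{MS17}. That lemma is stated only for the whole-plane GFF on $\C$ with the flow line started from an interior point; it is not a statement about flow lines of a GFF in a general domain. After your Markov-property reduction, the continuation $\eta^z_\theta|_{[\tau,\infty)}$ is a flow line of the conditional field in the slit domain $\wt D$, started from the \emph{boundary} point $\eta^z_\theta(\tau)$ (the tip of the stopped curve), with flow-line boundary data along $\eta^z_\theta([0,\tau])$ and whatever data $\wh h$ carries on $\partial D$. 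Invoking \cite[Lemma 3.8]{MS17} ``in $\wt D$'' is therefore invoking precisely the domain version of the statement you are trying to prove, and the appeal to absolute continuity and conformal invariance does not dispose of this: every neighborhood of the tube contains the tip, which lies on $\partial\wt D$, so the conditional field near the tube is not absolutely continuous with respect to a whole-plane GFF there, and conformal invariance only moves you to another domain-with-boundary setting. The paper resolves this not by quoting the statement of \cite[Lemma 3.8]{MS17} but by noting that its \emph{proof} carries through for a GFF on a general $D\subseteq\C$ with $z\in\ol D$ (possibly $z\in\partial D$); the conditioning on $\eta^z_\theta|_{[0,\tau]}$ is already built into that argument, so your preliminary Markov-property step is not the missing ingredient.

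A second, smaller problem is the topological step: you extend $\gamma$ into ``a component of $\wt D\setminus K$ different from the one containing $\gamma([0,1))$,'' but a simple arc $K$ in the interior of a domain generally does not disconnect it, so such a component need not exist. The separation argument has to be made locally inside the thin tube, where $K$ does separate provided the crossing point is an interior point of the arc; if $\gamma(1)$ happens to be an endpoint of $K$, you must reroute the extension to cross $K$ at a nearby interior point of the arc while staying inside $A(\ep)$. The paper achieves the ``hit $K$ before exiting'' conclusion differently but in the same spirit: inside the proof of \cite[Lemma 3.8]{MS17} it chooses the target point $x_0\in\partial U$ so that every path in $U$ joining $\eta^z_\theta(\tau)$ to $x_0$ must intersect $K$, which forces $\tau_2<\tau_1$ on the positive-probability event constructed there.
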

	\begin{proof} Our proof is very similar to that of \cite[Lemma 3.8]{MS17} and we will therefore only explain the difference as compared to that proof. The reader should consult that proof for the definition of $U$ and $x_0$ below. There are two differences between our lemma and \cite[Lemma 3.8]{MS17}. First, the latter lemma requires $D=\C$, while we consider general domains $D$ and allow $z\in\partial D$. Second, we define $\tau_2$ to be the hitting time of the set $K$ instead of letting it be the time that $\eta^z_\theta$ gets within distance $\ep$ of $\gamma(1)$. The proof carries through just as before with the first change. For the second change, the proof also carries through just as before except that (in the notation of the proof of \cite[Lemma 3.8]{MS17}) we pick the point $x_0\in\partial U$ in the proof such that any path in $U$ connecting $\eta(\tau)$ and $x_0$ must intersect $K$.
	\end{proof}
	
	The following lemma is \cite[Lemma 3.9]{MS17}, except that we have stated it for flow lines of a general angle $\theta\in \BB R$. We first introduce some terminology appearing in the next lemma. It is recalled below the statement of the lemma in \cite{MS17} that the admissible range of \emph{height differences for hitting} is $(-\pi\chi, 2\lambda - \pi\chi)$ (resp.\ $(\pi\chi-2\lambda, \pi\chi)$) if the flow line is hitting on the right (resp.\ left) side, where we refer to \cite[Figure 1.13]{MS17} for the definition of the \emph{height difference} between two flow lines when they intersect. \emph{Flow line boundary conditions} means that the boundary conditions for the GGF $\wh h$ determining the flow line change by $\chi$ times the winding of the curve.
	\begin{lemma}[Boundary case]
		Suppose that $\wh h$ is a GFF on a proper subdomain $D \subseteq \C$ whose boundary consists of a finite disjoint union of continuous paths, each with flow line boundary conditions of a given angle (which can change from path to path). Fix $z \in D$ and $\theta\in\R$ and let  $\eta_\theta^z$ be the flow line of $e^{i(\wh h/\chi+\theta)}$ of angle $\theta$ started from $z$.  Fix any almost surely positive and finite stopping time $\tau$ for $\eta_\theta^z$ such that $\eta_\theta^z([0,\tau]) \cap \partial D = \emptyset$ and $\eta_\theta^z(\tau) \notin \eta_\theta^z([0,\tau))$ almost surely.  Given $\eta_\theta^z|_{[0,\tau]}$, let $\gamma \colon [0,1] \to \ol{D}$ be any simple path in $\ol{D}$ starting from $\eta_\theta^z(\tau)$ such that $\gamma((0,1])$ is contained in the unbounded connected component of $\C \setminus \eta_\theta^z([0,\tau])$, $\gamma([0,1)) \cap \partial D = \emptyset$, and $\gamma(1) \in \partial D$.  Moreover, assume that if we extended the boundary conditions of the conditional law of $\wh h$ given $\eta_\theta^z|_{[0,\tau]}$  along $\gamma$ as if it were a flow line then the height difference of $\gamma$ and $\partial D$ upon intersecting at time $1$ is in the admissible range of height differences for hitting.  Fix $\epsilon > 0$, let $A(\epsilon)$ be the $\epsilon$-neighborhood of $\gamma([0,1])$ in $D$, and let
		\[ \tau_1 = \inf\{t \geq \tau : \eta_\theta^z(t) \notin A(\epsilon)\} \quad\text{and}\quad
		\tau_2 = \inf\{t \geq \tau : \eta_\theta^z(t) \in \partial D\}.\]
		Then $\P[ \tau_2 < \tau_1 \,|\, \eta_\theta^z|_{[0,\tau]}] > 0$.
		\label{prop-channel-hit-bdy}
	\end{lemma}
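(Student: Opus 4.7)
The plan is to follow the proof of \cite[Lemma 3.9]{MS17} essentially verbatim, observing that the angle $\theta$ enters the argument only through the constant shift $-\theta\chi$ in the boundary data and therefore does not alter any step of the proof. The first step is to condition on $\eta_\theta^z|_{[0,\tau]}$ and use the conformal Markov property of flow lines (see \cite[Theorem 1.1]{MS16a} and \cite[Section 2]{MS17}): the continuation of $\eta_\theta^z$ after time $\tau$ is the angle-$\theta$ flow line of a GFF on the unbounded connected component $\wt D$ of $D\setminus\eta_\theta^z([0,\tau])$ containing $\gamma((0,1])$, with boundary data obtained from that of $\wh h$ together with flow line boundary conditions along $\eta_\theta^z([0,\tau])$.

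Next, I would conformally map $\wt D$ to $\mathbb{H}$ sending $\eta_\theta^z(\tau)$ to $0$ and $\gamma(1)$ to some fixed boundary point $y\in\partial\mathbb{H}$. Under this map the continuation of $\eta_\theta^z$ becomes an $\mathrm{SLE}_\kappa(\underline{\rho})$ process started at $0$ with force points located at the images of the ``corners'' of $\wt D$, and the weights $\underline{\rho}$ are determined by the jumps in the boundary data (which now include the shift $-\theta\chi$). The assumption that the height difference between the extension of $\gamma$ and $\partial D$ at the intersection point $\gamma(1)$ lies in the admissible range of height differences for hitting is exactly what guarantees, in this image picture, that the corresponding force configuration at $y$ allows the SLE curve to hit $y$ (rather than being repelled); this is the point at which the precise numerology from \cite[Section 2]{MS17} is needed.

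With that setup, the positive probability lower bound is obtained by a ``channel following'' argument. Approximate the image $\wt\gamma$ of $\gamma$ by a piecewise linear path, take a sequence of small balls covering $\wt\gamma$ contained in the image of $A(\epsilon)$, and at each step use absolute continuity between the SLE$_\kappa(\underline{\rho})$ driving function and a Brownian motion (via a Girsanov-type argument as in \cite[Section 2]{MS17}) together with classical Brownian estimates, to show that with positive probability the flow line stays in the next ball and makes macroscopic progress along $\wt\gamma$. Iterating a bounded number of times brings the flow line into a neighborhood of $y$, from which the admissible height-difference condition ensures it actually hits $\partial D$ inside $A(\epsilon)$ with positive probability.

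The main obstacle in writing this out carefully is bookkeeping: one must verify that in the image domain the force point weights $\underline{\rho}$, shifted by the $-\theta\chi$ contribution of the angle and by the contributions from the flow line boundary conditions of $\partial D$, satisfy exactly the inequalities that characterize the admissible range of height differences for hitting, so that the SLE$_\kappa(\underline{\rho})$ does not degenerate or fail to approach $y$. Since this is precisely the content of the parallel bookkeeping in \cite[Lemma 3.9]{MS17} (carried out there for $\theta=0$, say), and the constant shift $-\theta\chi$ affects only the labeling of boundary data, the argument goes through unchanged, which is what I would point out rather than redoing the calculation.
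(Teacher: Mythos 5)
Your plan matches the paper's treatment: the paper gives no separate proof of Lemma~\ref{prop-channel-hit-bdy}, simply observing that it is \cite[Lemma 3.9]{MS17} restated for a general angle $\theta$, which (as you note) only shifts the boundary data by the constant $-\theta\chi$ and so changes nothing in the argument. Your additional sketch of the internal MS17 argument is consistent with this, so the proposal is correct and essentially the same approach.
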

	
	The following result is a restatement of (part of) \cite[Theorem 1.7]{MS17} and gives a criterion to determine when two flow lines cross or merge when they hit each other.
	\begin{lemma}[Criterion for crossing/merging]
		Let $\wh h$ be GFF with arbitrary boundary conditions on $D\subseteq\C$. For $\theta_1,\theta_2\in\R$ and $z_1,z_2\in\ol D$ let $\tau$ be a stopping time for $\eta^{z_1}_{\theta_1}$ given $\eta^{z_2}_{\theta_2}$ and work on the event that $\eta^{z_1}_{\theta_1}$ hits $\eta^{z_2}_{\theta_2}$ on its right side at time $\tau$. Let $\Delta$ denote the height difference between $\eta^{z_1}_{\theta_1}$ and $\eta^{z_2}_{\theta_2}$ upon intersecting at $\eta^{z_1}_{\theta_1}(\tau)$. Then the following hold.
		\begin{itemize}
			\item[(i)] If $\Delta\in(-\pi\chi,0)$ then $\eta^{z_1}_{\theta_1}$ crosses $\eta^{z_2}_{\theta_2}$ at time $\tau$ and does not subsequently cross back.
			\item[(ii)] If $\Delta=0$ then $\eta^{z_1}_{\theta_1}$ merges with $\eta^{z_2}_{\theta_2}$ at time $\tau$ and does not subsequently separate from $\eta^{z_2}_{\theta_2}$.
		\end{itemize}
		\label{prop-hit-merge-angle}
	\end{lemma}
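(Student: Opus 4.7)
The plan is to follow the strategy of the proof of \cite[Theorem 1.7]{MS17}, of which this lemma is a restatement, so I will sketch the approach there. The starting point is to condition on $\eta^{z_2}_{\theta_2}$: by the imaginary geometry coupling established in \cite{MS16a, MS17}, the field $\wh h$ restricted to $D\setminus \eta^{z_2}_{\theta_2}$ is a GFF in the complement with flow-line boundary conditions of angle $\theta_2$ along $\eta^{z_2}_{\theta_2}$. This pins down the conditional boundary values of $\wh h$ on both sides of the curve, modulo its winding, so that on the right side the boundary data equals $\lambda' - \theta_2\chi$ plus $\chi$ times the winding.

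Next, I would localize near the hitting point $\eta^{z_1}_{\theta_1}(\tau)$ via a conformal map from a small neighborhood of this point in the right-hand connected component of $D\setminus \eta^{z_2}_{\theta_2}([0,\infty))$ to the upper half-plane $\mathbb{H}$, sending the relevant portion of $\eta^{z_2}_{\theta_2}$ to a segment of the real line. Under this map the conditional boundary values become piecewise constant on $\partial\mathbb{H}$, with a jump across the image of the hitting point whose size is controlled by the height difference $\Delta$. The flow line $\eta^{z_1}_{\theta_1}$ of angle $\theta_1$ becomes, by the SLE/GFF coupling from \cite{MS16a}, a local $\mathrm{SLE}_\kappa(\rho_L;\rho_R)$ process whose force-point weights are affine functions of $\Delta$; in particular, $\Delta = 0$ corresponds to the specific value of $\rho_R$ for which the curve locally is a flow line of angle $\theta_2$.

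The crossing/merging dichotomy then reduces to the standard classification of $\mathrm{SLE}_\kappa(\rho)$ boundary behavior in $\mathbb{H}$. For force-point weight $\rho \in (-2, \kappa/2 - 2)$, which corresponds to $\Delta \in (-\pi\chi, 0)$, the $\mathrm{SLE}_\kappa(\rho)$ curve hits the forced boundary point transversally at an isolated time and then continues on the other side; translating back through the conformal map, $\eta^{z_1}_{\theta_1}$ crosses $\eta^{z_2}_{\theta_2}$ at time $\tau$. For $\Delta = 0$, the analysis instead identifies the continuation of $\eta^{z_1}_{\theta_1}$ beyond $\tau$ as a flow line of $\wh h$ of angle $\theta_2$ started from the hitting point, which by the uniqueness of flow lines of a given angle from a given point (also from \cite{MS17}) must coincide with $\eta^{z_2}_{\theta_2}$ from time $\tau$ onward. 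The ``no crossing back'' in (i) and ``no subsequent separation'' in (ii) then follow by iterating the analysis at any further potential hitting time: after a transversal crossing the sign of the relative height difference flips so that a second crossing would again require $\Delta \in (-\pi\chi, 0)$ from the opposite side, contradicting the first crossing's geometry, while after a merging event flow-line uniqueness forbids the curves from ever separating.

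The hard part will be the careful bookkeeping of boundary conditions and winding under the conformal map: the height difference $\Delta$ is only defined globally modulo $2\pi\chi$, but the local analysis near the hitting point must fix a consistent representative in order to read off the correct force-point weight $\rho_R$, and this correspondence between winding and additive integer multiples of $2\pi\chi$ is precisely the source of the winding conventions of \cite[Figures 1.9--1.10]{MS17}. In particular, one must verify that the conformal map near the hitting point produces exactly the boundary-data jump predicted by the global $\Delta$ once winding contributions are accounted for on both $\eta^{z_1}_{\theta_1}$ and $\eta^{z_2}_{\theta_2}$.
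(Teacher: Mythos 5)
The paper does not actually prove this lemma: it is stated verbatim as a restatement of (part of) \cite[Theorem 1.7]{MS17}, so the citation itself serves as the proof. Your sketch is essentially a summary of how that cited theorem is proved in \cite{MS16a,MS17} --- condition on $\eta^{z_2}_{\theta_2}$ to obtain flow-line boundary data along its trace, analyze the conditional $\mathrm{SLE}_\kappa(\rho)$-type behavior of $\eta^{z_1}_{\theta_1}$ at the hitting point, and use uniqueness/merging of flow lines of equal angle in the case $\Delta=0$ --- so it follows the same route as the source on which the paper relies, and there is no divergence to compare. One small imprecision worth noting: the interval $\rho\in(-2,\kappa/2-2)$ is the condition for the conditional curve to hit the relevant boundary arc at all, not a translation of the crossing regime $\Delta\in(-\pi\chi,0)$; in \cite[Theorem 1.7]{MS17} the trichotomy (cross once / bounce without crossing / merge) is read off directly from the sign of the height difference within the admissible hitting range, which is precisely the winding and boundary-data bookkeeping you yourself defer to that reference, so this does not affect the validity of the approach.
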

	
	The following lemma will be used to argue that the north-going flow lines in Lemma~\ref{prop-pos-dens-key} behave according to condition (i) with positive probability.
	
	\begin{figure}[ht]
		\centering
		\includegraphics[scale=0.8]{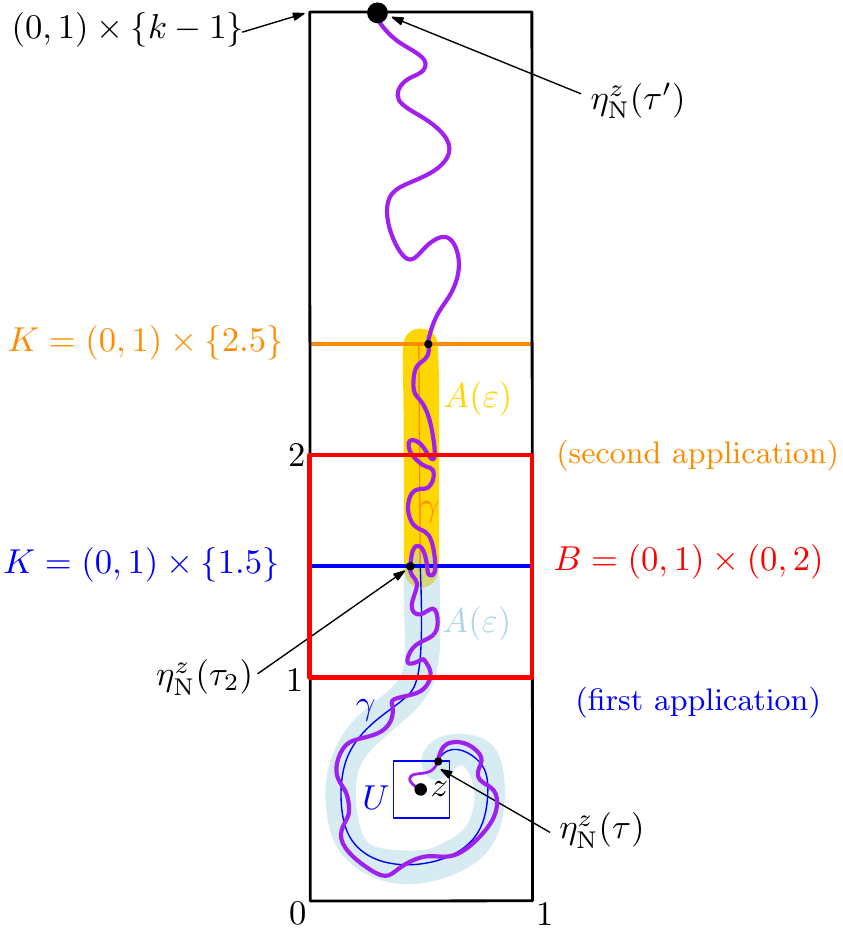}
		\caption{Illustration of the proof of Lemma \ref{prop-nice}. The path $\gamma$ and the set $K$ used in the first application of Lemma~\ref{prop-channel-hit-interior} are in blue, while the path $\gamma$ and the set $K$ used in the second application of Lemma~\ref{prop-channel-hit-interior} are in orange.}
		\label{fig-flow-lines-crossing2}
	\end{figure}
	
	\begin{lemma}[Condition (i) in Lemma~\ref{prop-pos-dens-key}]
		Let $z\in(0,1)\times(0,1)$, let $k\in\N$, and let $\tau'=\inf\{t\geq 0\,:\, \eta^z_{\op{N}}(t)\not\in (0,1)\times(0,k+1) \}$ be the time at which $\eta_{\op{N}}^z$ exits the rectangle $(0,1)\times(0,k+1)$. Then it holds with positive probability that $\eta_{\op{N}}^z(\tau')\in (0,1)\times\{k+1 \}$ and that $\eta_{\op{N}}^z$ crosses $(0,1)\times(i-1,i)$ nicely in north direction for $i=2,\dots,k$. 
		\label{prop-nice}
	\end{lemma}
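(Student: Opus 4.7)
The plan is to force $\eta^z_{\op{N}}$ to stay in a thin tube around a carefully chosen reference curve $\gamma$ by iteratively applying Lemma~\ref{prop-channel-hit-interior}. Fix $\delta\in(0,1/20)$ and choose a simple piecewise-linear path $\gamma:[0,1]\to\overline{(0,1)\times(0,k+1)}$ starting at $z$ and ending at $(1/2,k+1)$, such that within each cell $B_i=(0,1)\times(i-1,i)$ the segment $\gamma\cap B_i$ is a straight vertical line (so that its winding is zero), and such that the $\delta$-neighborhood of $\gamma\cap B_i$ is contained in $(\delta/2,1-\delta/2)\times(i-1,i)$. For $i=2,\dots,k+1$, let $p_i$ be the intersection of $\gamma$ with the horizontal line $\{y=i-1+\delta\}$, and let $K_i$ be a small horizontal line segment of length $\delta/10$ centered at $p_i$.

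We now construct the required positive-probability event iteratively. Set $\tau_1=0$ and, for $i=2,\dots,k+1$, let $\tau_i$ denote the first time after $\tau_{i-1}$ at which $\eta^z_{\op{N}}$ hits $K_i$. Given the flow line up to time $\tau_{i-1}$, we apply Lemma~\ref{prop-channel-hit-interior} with the target set $K_i$ and the connecting path $\gamma|_{[p_{i-1},p_i]}$ (for $i=2$, with the connecting path being $\gamma$ from $z$ to $p_2$), to conclude that with positive conditional probability $\eta^z_{\op{N}}$ hits $K_i$ before leaving the $\delta$-neighborhood of $\gamma|_{[p_{i-1},p_i]}$. Iterating and using the tower property gives positive probability of the joint event $E_{\op{in}}$ that $\eta^z_{\op{N}}([\tau_{i-1},\tau_i])$ stays in the $\delta$-neighborhood $A_i$ of $\gamma|_{[p_{i-1},p_i]}$ for all $i=2,\dots,k+1$. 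On $E_{\op{in}}$, the flow line enters each $B_i$ ($i=2,\dots,k$) through its bottom boundary at a point where it is approximately vertical, so the winding of $\eta^z_{\op{N}}$ at the corresponding first hitting time of $\partial_{\op{B}}B_i$ agrees with that of a vertical segment; since the $A_i$'s form a simply connected tube around the non-winding curve $\gamma$, no extra loops around $z$ occur, and conditions (i)--(ii) of Definition~\ref{def-nice} are satisfied for each $B_i$.

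For condition (iii) of Definition~\ref{def-nice} we need to handle the whole flow line $\eta^z_{\op{N}}$, not only the portion up to $\tau_{k+1}$. On $E_{\op{in}}$, the tube confinement forces $\eta^z_{\op{N}}([\tau_i,\tau_{i+1}])$ to stay strictly above the line $\{y=i-1\}$, so no bottom of any cell $B_j$ with $j\le k$ is revisited while inside the big rectangle. To handle the continuation after $\tau_{k+1}$, let $K^\star$ be a horizontal segment at height $y=2k+2$; applying Lemma~\ref{prop-channel-hit-interior} one further time (with a connecting path going straight up from $p_{k+1}$ to $K^\star$) gives, with positive conditional probability, that the flow line reaches $\{y=2k+2\}$ without returning into any $B_i$. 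After this time, the continuation is a flow line in the complement of $\eta^z_{\op{N}}([0,\tau^\star])$ issued from a point of height $2k+2$; by the a.s.\ transience of whole-plane $\mathrm{SLE}_\kappa(2-\kappa)$ (which, after an additional application of Lemma~\ref{prop-channel-hit-interior} on an appropriate exhaustion, gives positive probability that $\eta^z_{\op{N}}$ remains above $\{y=k+1/2\}$ forever) the continuation with positive conditional probability never revisits the bottom of any $B_i$, completing condition (iii). Finally, on this event $\eta^z_{\op{N}}(\tau')\in(0,1)\times\{k+1\}$ since $\eta^z_{\op{N}}$ exits the rectangle while in the tube, which touches the top only.

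The main obstacle in making the above rigorous is a careful bookkeeping of the winding: to invoke Definition~\ref{def-nice}\,(ii) exactly (and not only modulo $2\pi\chi$), we must ensure that no stray full loops around $z$ develop, which is what the requirement that the $\delta$-tube around $\gamma$ be simply connected and simply wound is designed to guarantee. The secondary difficulty is extending positive probability control on $\eta^z_{\op{N}}|_{[0,\tau_{k+1}]}$ to the infinite-time behavior required by condition (iii); this is handled by the additional tube step to height $2k+2$ combined with transience, as indicated above.
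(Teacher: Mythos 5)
Your overall strategy—iterating Lemma~\ref{prop-channel-hit-interior} to force $\eta^z_{\op{N}}$ through the column of squares along a reference path, stopping it inside each square so that it cannot oscillate—is the same as the paper's. But there is a genuine gap in how you verify condition (ii) of Definition~\ref{def-nice}. You fix a deterministic reference path that is straight and non-winding and argue that, since the tube prevents ``extra loops around $z$'', the boundary data upon crossing each cell is exactly $-\lambda'$/$\lambda'$. This only controls the winding in one direction. Because the additive constant of $\wh h$ has been fixed (circle average in $[0,2\pi\chi)$), the absolute boundary values along the initial segment $\eta^z_{\op{N}}|_{[0,\tau]}$ are random: they are determined up to which integer multiple of $2\pi\chi$ the field realizes near $z$. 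With positive probability the flow line that goes straight up without looping will cross the cells with boundary data off by a nonzero multiple of $2\pi\chi$, in which case Definition~\ref{def-nice}(ii) fails. The paper's proof addresses exactly this point: the reference path $\gamma$ in the first application of Lemma~\ref{prop-channel-hit-interior} is chosen, \emph{given} $\eta^z_{\op{N}}|_{[0,\tau]}$, to wind around $z$ an appropriate (field-dependent) number of times so that the prescribed boundary values are hit exactly, not just modulo $2\pi\chi$. Your construction leaves no room for this adjustment, so conditions (i)--(ii) of Definition~\ref{def-nice} are asserted rather than arranged.

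A second, smaller issue is your treatment of condition (iii): you send the curve up to height $2k+2$ and then claim that ``an additional application of Lemma~\ref{prop-channel-hit-interior} on an appropriate exhaustion'' plus transience yields positive probability that the flow line stays above $\{y=k+1/2\}$ forever. Lemma~\ref{prop-channel-hit-interior} is a finite-time, local statement; an infinite iteration of it gives an infinite product of conditional probabilities with no lower bound, and transience of the whole-plane flow line does not by itself give positive probability of avoiding a fixed half-plane after a finite time. The paper instead secures (iii) by stopping the flow line at least once in each square and choosing the second reference path in each square so that no top--bottom crossing can occur (this is precisely why it warns that proximity to a reference path alone does not rule out back-and-forth oscillations). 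As written, your all-time confinement step is not justified and would need a genuinely different argument.
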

	\begin{proof}
		See Figure \ref{fig-flow-lines-crossing2} for an illustration.
		The lemma follows by repeated applications of Lemma~\ref{prop-channel-hit-interior}. Throughout the proof we assume that $\ep\in(0, 1/10)$ in the statement of Lemma~\ref{prop-channel-hit-interior} is sufficiently small. First, we apply Lemma~\ref{prop-channel-hit-interior} to make sure conditions (i) and (ii) in Definition~\ref{def-nice} are satisfied for the square $B=(0, 1)\times(1,2)$ with positive probability. Letting $U$  
		be some neighborhood of $z$ which is compactly contained in $(0,1)\times(0,1)$, we apply Lemma~\ref{prop-channel-hit-interior} with $\tau=\inf\{t\geq 0\,:\, \eta^{z}_{\op{N}}(t)\not\in U \}$, $K=(0, 1)\times\{1.5 \}$ and $\gamma$ a path in $(0,1)\times(0,1.5)$ which winds around $z$ appropriately many times such that condition (ii) of Definition~\ref{def-nice} is satisfied for $B$. 
		We then apply Lemma~\ref{prop-channel-hit-interior} another time with $\tau$ equal to $\tau_2$ in the previous application of the lemma, $K=(0,1)\times\{ 2.5 \}$, and with $\gamma$ such that condition (iii) of Definition~\ref{def-nice} is satisfied for $B$ if $\tau_2<\tau_1$ and the flow line does not reenter $B$ after time $\tau_2$ in the second application of the lemma.
		
		We iteratively apply Lemma~\ref{prop-channel-hit-interior} for each square $(0,1)\times(i-1,i)$ with $i=2,\dots,k$ in order to guarantee that all the requirements of the lemma are satisfied. Note in particular that we need to stop the flow line at least once in each square in order to guarantee that (iii) in Definition~\ref{def-nice} is satisfied since Lemma~\ref{prop-channel-hit-interior} itself only guarantees that the flow line stays close to some reference path and does not rule out that the flow line oscillates many times back and forth along the reference path.
	\end{proof}
	
	The following lemma will be used to argue that the west-going flow lines in Lemma~\ref{prop-pos-dens-key} behave according to condition (ii) with positive probability.
	
	\begin{lemma}[Condition (ii) in Lemma~\ref{prop-pos-dens-key}]
		Let $k\in\{2,3,\dots \}$, $z\in(k-1,k)\times(0,1)$,  
		$\mcl I\subseteq\{1,\dots,k-1 \}$, 
		$z_j\in (j-1,1)\times(-\infty,0)$ for $j\in\mcl I$. 
		Suppose that for $j\in\mcl I$ the flow line $\eta^{z_j}_{\op{N}}$ crosses $(j-1,1)\times(0,1)$ nicely in north direction. Let $\tau'=\inf\{t\geq 0\,:\,\eta^z_{\op{W}}(t)\not\in(0,k)\times(0,1) \}$ be the time at which $\eta_{\op{W}}^z$ exits the rectangle $(0,k)\times(0,1)$. Then it holds with positive probability that 
		$\eta_{\op{W}}^z(\tau')\in \{0 \}\times(0,1)$ and that $\eta_{\op{W}}^z$ crosses each box $(i-1,i)\times(0,1)$ for $i=1,\dots,k-1$ nicely in west direction.
		\label{prop-cross-curves}
	\end{lemma}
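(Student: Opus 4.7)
The plan is to mimic the proof of Lemma~\ref{prop-nice}, iteratively applying Lemmas~\ref{prop-channel-hit-interior}~and~\ref{prop-channel-hit-bdy} to force $\eta_{\op W}^z$ to hug a carefully chosen reference path. First, I would condition on the realization of the north-going flow lines $\eta_{\op N}^{z_j}$, $j\in\mathcal I$; by the imaginary geometry Markov property, the conditional law of $\wh h$ on the complement of $\bigcup_{j\in\mathcal I}\eta_{\op N}^{z_j}$ is a GFF with flow-line boundary conditions along each $\eta_{\op N}^{z_j}$, and the west-going flow line $\eta_{\op W}^z$ is the angle-$\pi/2$ flow line of this conditional field started from $z$.

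The structural input is that the height difference between an angle-$\pi/2$ flow line and an angle-$0$ flow line falls, up to a controllable $2\pi\chi$-valued winding offset, in the crossing range $(-\pi\chi,0)$ of Lemma~\ref{prop-hit-merge-angle}(i). Hence $\eta_{\op W}^z$ crosses each $\eta_{\op N}^{z_j}$ the first time it hits it and cannot re-cross. Using this, I would construct a reference path $\gamma$ in $\overline{(0,k)\times(0,1)}$ from $z$ to a point of $\{0\}\times(0,1)$ with the following properties: (a) inside each column $(i-1,i)\times(0,1)$, $\gamma$ enters from the right side and exits from the left side and winds appropriately so that the boundary conditions required by condition (ii') of Definition~\ref{def-nice} hold; (b) for $j\in\mathcal I$, $\gamma$ intersects $\eta_{\op N}^{z_j}$ exactly once and does so with height difference in $(-\pi\chi,0)$.

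I would then introduce a finite increasing family of stopping times $0<\tau_1<\cdots<\tau_N$ for $\eta_{\op W}^z$, placing stopping times interior to each column (as in the proof of Lemma~\ref{prop-nice}, in order to force condition (ii')), just before each intersection with an $\eta_{\op N}^{z_j}$, and just before the final exit. Between consecutive stopping times I would apply Lemma~\ref{prop-channel-hit-interior} when the next target $K$ is a transverse segment in the interior of the working domain, and Lemma~\ref{prop-channel-hit-bdy} when the target is a segment of some $\eta_{\op N}^{z_j}$ (which, after conditioning, plays the role of domain boundary); Lemma~\ref{prop-hit-merge-angle}(i) then guarantees that the hit is actually a crossing. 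After each crossing I update the conditioning to include the new portion of $\eta_{\op W}^z$ and iterate. Multiplying the finitely many positive lower bounds from each step yields the desired positive probability.

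The main obstacle is bookkeeping the $2\pi\chi$-valued offsets that accumulate from the winding of $\eta_{\op W}^z$ around $z$ and from its successive crossings of the $\eta_{\op N}^{z_j}$: one must arrange the admissible-height condition at each crossing and the exact boundary condition of (ii') in Definition~\ref{def-nice} inside each column to hold simultaneously. This is handled by building flexibility into $\gamma$, by inserting small closed loops between successive stopping times to adjust the winding by any desired integer multiple of $2\pi$; since the two hitting lemmas only require closeness in an $\varepsilon$-tube around the reference path, each corrective loop costs only a positive multiplicative factor and leaves the argument qualitatively unchanged.
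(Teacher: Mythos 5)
Your overall skeleton matches the paper's proof: condition on the north-going flow lines, use Lemma~\ref{prop-channel-hit-interior} to steer $\eta^z_{\op{W}}$ through the columns, use Lemma~\ref{prop-channel-hit-bdy} when the target is the up-crossing segment of some $\eta^{z_j}_{\op{N}}$ (which plays the role of domain boundary after conditioning), and invoke Lemma~\ref{prop-hit-merge-angle}(i) to get a crossing without return. However, the one point on which the whole argument turns is exactly the point you leave open: you assert that the height difference at the hit lies in $(-\pi\chi,0)$ ``up to a controllable $2\pi\chi$-valued winding offset,'' and you never verify compatibility between your requirement (a) (the boundary data of condition (ii') of Definition~\ref{def-nice} in each column) and your requirement (b) (height difference in the crossing range at each hit). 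In the paper this is not a matter of tuning: since the additive constant of $\wh h$ is fixed, condition (ii) (assumed for the north-going lines) and condition (ii') (enforced for $\eta^z_{\op{W}}$ via the reference path, with no extra winding) pin the absolute boundary values along both curves, and a direct computation then gives that the height difference upon hitting the up-crossing segment is exactly $\Delta=-\pi\chi/2$, which happens to lie in $(-\pi\chi,0)$. That computation is the content of the lemma's ``nice crossing'' machinery, and it is missing from your proposal.

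Worse, the mechanism you propose to reconcile (a) and (b) --- inserting loops in the reference path to shift the winding by multiples of $2\pi$ --- does not work. First, the hitting lemmas require the reference path $\gamma$ to be simple, so literal closed loops are not admissible (a spiral could mimic this, but that is beside the main point). Second, and decisively, any extra $2\pi$ of winding picked up by $\eta^z_{\op{W}}$ before it enters a column shifts its boundary values along itself by $2\pi\chi$ for the rest of its trajectory, so condition (ii') of Definition~\ref{def-nice} fails in that column and in all subsequent ones; since the conclusion of the lemma demands that every box $(i-1,i)\times(0,1)$, $i=1,\dots,k-1$, be crossed nicely, your corrective loops would destroy the very event you are trying to produce (and a $\pm 2\pi\chi$ shift would in general also take $\Delta$ out of the admissible hitting/crossing range). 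The correct statement is that no adjustment is needed or possible: the pinned boundary data force $\Delta=-\pi\chi/2$. A secondary omission: when applying Lemma~\ref{prop-channel-hit-bdy} you need a connecting path from $\eta^z_{\op{W}}(\tau)$ to an interior point of the up-crossing segment that does not cross $\eta^{z_j}_{\op{N}}$; the paper secures the existence of such a path using condition (iii) of Definition~\ref{def-nice} (no top--bottom recrossings of the north-going line), which your construction should also cite.
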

	\begin{figure}[ht]
		\centering
		\includegraphics[scale=1]{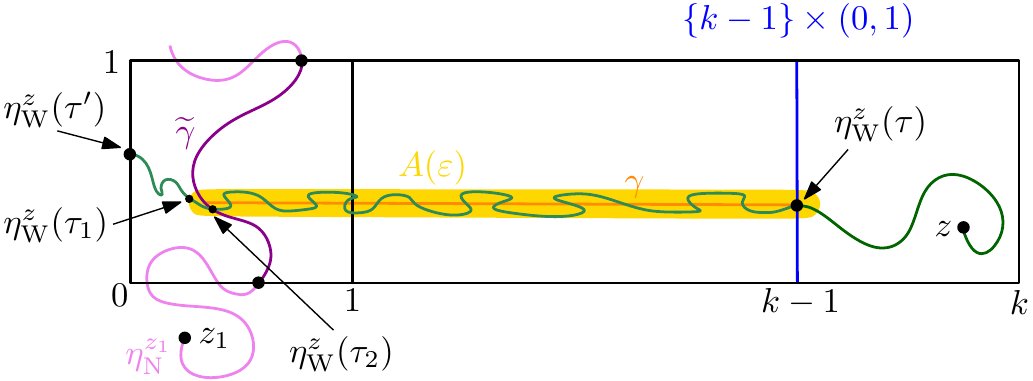}
		\caption{Illustration of the proof of Lemma \ref{prop-cross-curves} in the case $\mcl I=\{1 \}$. \label{fig-flow-lines-crossing}}
	\end{figure}
	\begin{proof}
		For concreteness we consider the case $\mcl I=\{1 \}$ but the general case can be treated similarly. See Figure \ref{fig-flow-lines-crossing} for an illustration. First apply Lemma~\ref{prop-channel-hit-interior} with $K=\{k-1 \}\times(0,1)$ similarly in the proof of Lemma~\ref{prop-nice} to make sure conditions (i') and (ii') in Definition~\ref{def-nice} are satisfied for the square $B=(k-2, k-1)\times(0,1)$ with positive probability. Let $\wt\gamma$ be the segment of $\eta^{z_1}_{\op{N}}$ corresponding to the (unique, by condition (ii) of Definition~\ref{def-nice}) up-crossing of $(0,1)\times(0,1)$, i.e., it is a path which starts (resp.\ ends) on the lower (resp.\ upper) boundary of $(0,1)\times(0,1)$. 
		Apply Lemma~\ref{prop-channel-hit-bdy} with 
		$\tau$ equal to the hitting time of $K=\{k-1 \}\times(0,1)$,
		$D$ equal to the infinite connected component of the complement of $\eta_{\op{N}}^{z_1}$,
		and $\gamma$ equal to a path starting at $\eta_{\op{W}}^z(\tau)$ and ending at an interior point of $\wt\gamma$, such that $\gamma$ does not cross $\eta^{z_1}_{\op{N}}$; it is possible to find an appropriate $\gamma$ by condition (iii) of Definition~\ref{def-nice}. When applying this lemma we note that by Definition \ref{def-nice}, the boundary conditions of the two flow lines is such that their height difference $\Delta=-\pi\chi/2$ is in the admissible range for hitting. By the first assertion of Lemma~\ref{prop-hit-merge-angle}, $\eta_{\op{W}}^z$ will cross $\wt\gamma$ without coming back immediately after hitting $\wt\gamma$. We now conclude the proof by applying Lemma~\ref{prop-channel-hit-interior} again. 
	\end{proof}
	
	Combining the lemmas above, we can now conclude the proof of Lemma~\ref{prop-pos-dens-key}.
	\begin{proof}[Proof of Lemma~\ref{prop-pos-dens-key}]
		We will first argue that (i)-(iv) occur with positive probability. Condition (i) occurs with positive probability by Lemma~\ref{prop-nice}, where we can apply the latter lemma iteratively for all the flow lines $\eta^{z_j}_{\op{N}}$ since the law of the field restricted to $(\pi(j)-1,\pi(j))\times(j-1,k+1)$ conditioned on the realization of a subset of the other flow lines is absolutely continuous with respect to the unconditional law of the field, conditioned on the event that none of the other flow lines intersect $[\pi(j)-1,\pi(j)]\times[j-1,k+1]$. Conditioned on (i), we get that (iii) occurs with positive probability by Lemma~\ref{prop-channel-hit-bdy} and the second assertion of Lemma~\ref{prop-hit-merge-angle},
		where we apply Lemma~\ref{prop-channel-hit-bdy} with $\tau=\inf\{ t\geq 0\,:\,\eta^{z_j}_{\op{N}}\not\in (\pi(j)-1,\pi(j))\times (j-1,k+1) \}$ and the flow lines are in the admissible range for merging (i.e., $\Delta=0$) due to condition (ii) of Definition~\ref{def-nice}. Note that in order to also guarantee that $\eta^{z_j}_{\op{N}}([\tau^j_{\op{N}},\rho_{\op{N}}^{j,i}])\subset R_{\op{N}}$, one can use a similar argument as in the proofs of Lemmas \ref{prop-nice} and \ref{prop-cross-curves}.
		
		Similarly, condition (ii) occurs with positive conditional probability given occurrence of (i) and (iii) by Lemma~\ref{prop-cross-curves}, and finally condition (iv) occurs with positive conditional probability given (i)-(iii) by Lemma~\ref{prop-channel-hit-bdy} and the second assertion of Lemma~\ref{prop-hit-merge-angle}. We conclude that (i)-(iv) occur with positive probability, and we denote this probability by $s>0$.
		
		To prove the full lemma, it is sufficient to argue that we a.s.\ can find a (random) $\delta>0$ such that the event in (v) occurs. Indeed, this implies that with probability at least $1-s/2$ the event in (v) occurs for some sufficiently small fixed $\delta>0$, which concludes the proof by the result of the previous paragraph and a union bound. We will now argue the a.s.\ existence of such a $\delta>0$. It is sufficient to consider only the north-going flow line starting from $z_1$. By continuity of the space-filling SLE $\eta'_{-\frac{\pi}{2}}$ generated by the north-going flow lines we can a.s.\ find an open interval $I$ such that  $\eta'_{-\frac{\pi}{2}}(I)$ is contained in $(\pi(1)-1,\pi(1))\times(0,1)$ and $z_1$ is contained in $\eta'_{-\frac{\pi}{2}}(I)$ 
		Furthermore, since $z_1$ is a.s.\ not a double point of $\eta'_{-\frac{\pi}{2}}$ and so $z_1$ must be contained in the interior of $\eta'_{-\frac{\pi}{2}}(I)$, there is a.s.\ a (random) $\delta>0$ such that $B_\delta(z_1)$ is contained in $\eta'_{-\frac{\pi}{2}}(I)$. This $\delta$ satisfies our requirement since for all $z\in B_\delta(z_1)$ the flow line $\eta^z_{\op{N}}$ merges into $\eta^{z_1}_{\op{N}}$ before leaving $\eta'_{-\frac{\pi}{2}}(I)\subset (\pi(1)-1,\pi(1))\times(0,1)$.
	\end{proof}
	
	\appendix
	
	\section{Permutation patterns}\label{sect:intro_patt}
	
	Recall that $\mathcal{S}_n$ denotes the set of permutations of size $n$ and $\mathcal{S}=\bigcup_{n\in\Z_{>0}} \mathcal{S}_n$ denotes the set of permutations of finite size. We write permutations using the \emph{one-line notation}, that is, if $\sigma$ is a permutation of size $n$ then we write $\sigma=\sigma(1)\dots\sigma(n)$. Given a permutation $\sigma$ of size $n$, its \emph{diagram} is a $n \times n$ table with $n$ points at position $(i,\sigma(i))$ for all $i\in[n]:=\{1,2,\dots,n\}$ (see the left-hand side of Figure~\ref{fig:pattern_perm_exemp}). Given a subset $I$ of the indexes of $\sigma$, i.e.\ $I\subseteq[n]$, recall that the \emph{pattern induced by} $I$ in $\sigma$, denoted $\pat_{I}(\sigma)$, is the permutation corresponding to the diagram obtained by rescaling the points $(i,\sigma(i))_{i\in I}$ in a $|I|\times |I|$ table (keeping the relative position of the points). Later, whenever $\pat_{I}(\sigma)=\pi$, we will also say that $(\sigma(i))_{i\in I}$ is an \emph{occurrence} of $\pi$ in $\sigma$. An example will be given in Example~\ref{exemp:pattern_perm_exemp} and Figure~\ref{fig:pattern_perm_exemp} below.
	
	A \emph{(standard) pattern} of size $k$ is just a permutation of size $k$.
	A permutation $\sigma$ \emph{avoids a (standard) pattern} $\pi$ if it is not possible to find a subset $I$ of the indexes of $\sigma$ such that $\pat_{I}(\sigma)=\pi$. The collection of all permutations (of any size) avoiding a set of (standard) patterns is often called a \emph{permutation class}.
	
	A \emph{generalized pattern} $\pi$ of size $k$, sometime also called \emph{vincular pattern}, is a permutation $\pi=\pi(1)\dots\pi(k)$, where some of its consecutive values are underlined. For instance, the permutation $7\underbracket[.5pt][1pt]{41}3\underbracket[.5pt][1pt]{526}$ is a generalized pattern. A permutation $\sigma$ \emph{avoids a generalized pattern} $\pi$, if it is not possible to find a subset $I$ of the indexes of $\sigma$ such that $\pat_{I}(\sigma)=\pi$ and
	$I$ has consecutive elements corresponding to the underlined values of $\pi$. We clarify the latter definition in the following example.
	
	\begin{example}\label{exemp:pattern_perm_exemp}
		We consider the permutation $\sigma=23641587$. Its diagram is plotted on the right-hand side of Figure~\ref{fig:pattern_perm_exemp}. Given the set of indices $I=\{2,3,5,6\}$, the pattern induced by $I$ in $\sigma$ is the permutation $\pat_I(\sigma)=2413$, plotted in the middle of Figure~\ref{fig:pattern_perm_exemp}. Therefore the permutation $\sigma$ does not avoid the standard pattern $2413$, but for instance it avoids the standard pattern $4321$ because it is not possible to find 4 points in the diagram of $\sigma$ that are in decreasing order.
		
		We also note that the permutation $\sigma$ avoids the generalized pattern $2\underbracket[.5pt][1pt]{41}3$. Indeed it is not possible to find four indices $i,j,j+1,k$ such that $1\leq i<j<j+1<k\leq 8$ and $\sigma(j+1)<\sigma(i)<\sigma(k)<\sigma(j)$.
	\end{example}
	
	We remark that Baxter permutations, introduced in Definition~\ref{def:baxter}, can be described as permutations avoiding the generalized patterns $2\underbracket[.5pt][1pt]{41}3$ and $3\underbracket[.5pt][1pt]{14}2$.
	
	\begin{figure}[ht]
		\centering
		\includegraphics[scale=1]{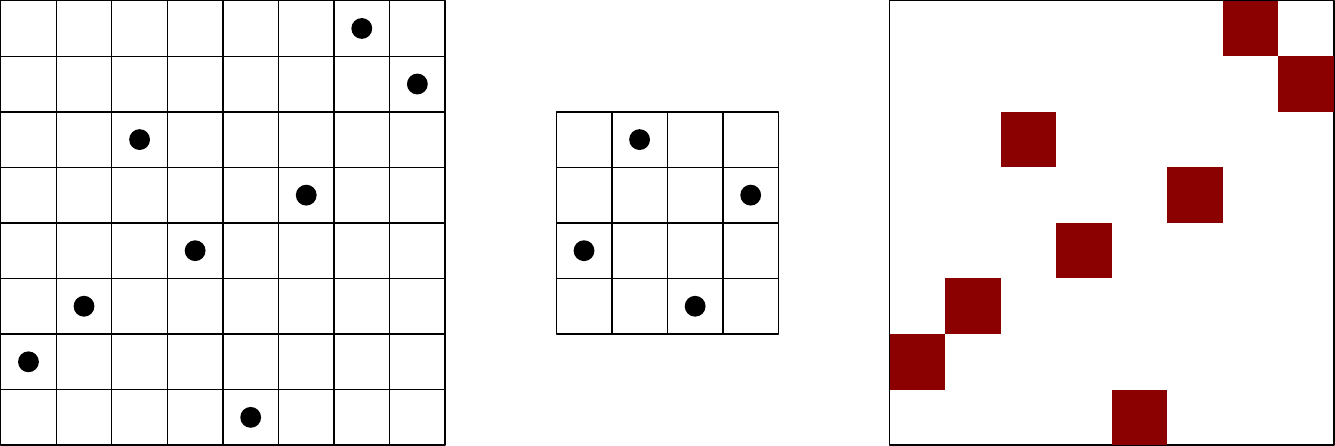}
		\caption{\label{fig:pattern_perm_exemp}\textbf{Left:} The diagram of the permutation $\sigma=23641587$. \textbf{Middle:} The pattern induced by the set of indices $I=\{2,3,5,6\}$ in $\sigma$, that is the permutation $\pat_I(\sigma)=2413$. \textbf{Right:} The permuton $\mu_{\sigma}$ corresponding to the permutation $\sigma=23641587$.}
	\end{figure}
	
	\noindent\textbf{Data availability statement:}
	Data sharing is not applicable to this article as no new data were created or analyzed in this study.
	
	\medskip
	
	\noindent\textbf{Conflict of interest:}
	The authors have no competing interests to declare that are relevant to the content of this article.

	\bibliographystyle{alpha}

	\bibliographystyle{alpha}
	\newcommand{\etalchar}[1]{$^{#1}$}
	
\end{document}